\newcommand{\klockan}{\the\hours:{\ifnum\minutes<10 0\fi}\the\minutes}
\newcommand{\tid}{\today\ \klockan}
\newcommand{\prtid}{\smash{\raise 10mm \hbox{\LaTeX ed \tid}}}
\renewcommand{\prtid}{}
\def\sectionmark#1{} 
\def\subsectionmark#1{}
\newcommand{\sectnr}{\ifnum \c@secnumdepth >\z@
                 \thesection.\hskip 1em\relax \fi}
\def\@evenhead{\footnotesize\rm\thepage\hfil\leftmark\hfil\llap{\prtid}}
\def\@oddhead{\footnotesize\rm\rlap{\prtid}\hfil\rightmark\hfil\thepage}
\def\tableofcontents{\section*{Contents} 
 \@starttoc{toc}}
\def\@biblabel#1{#1.}
\let\Thebibliography=\thebibliography
\renewcommand{\thebibliography}[1]{\def\@mkboth##1##2{}\Thebibliography{#1}
\addcontentsline{toc}{section}{References}
\frenchspacing 
\setlength{\@topsep}{0pt}
\setlength{\itemsep}{0pt}%
\setlength{\parskip}{0pt plus 2pt}%
}
\def\mdots@{\mathinner.\nonscript\!.%
 \ifx\next,.\else\ifx\next;.\else\ifx\next..\else
 \nonscript\!\mathinner.\fi\fi\fi}
\let\ldots\mdots@
\let\cdots\mdots@
\let\dotso\mdots@
\let\dotsb\mdots@
\let\dotsm\mdots@
\let\dotsc\mdots@
\def\vdots{\vbox{\baselineskip2.8\p@ \lineskiplimit\z@
    \kern6\p@\hbox{.}\hbox{.}\hbox{.}\kern3\p@}}
\def\ddots{\mathinner{\mkern1mu\raise8.6\p@\vbox{\kern7\p@\hbox{.}}%
    \raise5.8\p@\hbox{.}\raise3\p@\hbox{.}\mkern1mu}}
\let\Enumerate=\enumerate
\renewcommand{\enumerate}{\Enumerate%
\setlength{\@topsep}{0pt}
\setlength{\itemsep}{0pt}%
\setlength{\parskip}{0pt plus 1pt}%
\renewcommand{\theenumi}{\textup{(\alph{enumi})}}%
\renewcommand{\labelenumi}{\theenumi}%
}
\let\endEnumerate=\endenumerate
\renewcommand{\endenumerate}{\endEnumerate\unskip}
\def\@seccntformat#1{\csname the#1\endcsname.\quad}
\newcommand{\authortitle}[2]{\author{#1}\title{#2}\markboth{#1}{#2}}
\newcommand{\auth}[2]{{#1, #2.}}
\def\idxauth{\auth}
\newcommand{\art}[6]{{\sc #1, \rm #2, \it #3\/ \bf #4 \rm (#5), \mbox{#6}.}}
\newcommand{\artnopages}[5]{{\sc #1, \rm #2, \it #3\/ \bf #4 \rm (#5).}}
\newcommand{\artprep}[3]{{\sc #1, \rm #2, \rm #3.}}
\newcommand{\artin}[3]{{\sc #1, \rm #2,  in #3.}}
\newcommand{\arttoappear}[3]{{\sc #1, \rm #2, to appear in \it #3}}
\newcommand{\book}[3]{{\sc #1, \it #2, \rm #3.}}
\newcommand{\AND}{{\rm and }}
\newtheoremstyle{descriptive}%
  {\topsep}   
  {\topsep}   
  {\rmfamily} 
  {}          
  {\bfseries} 
  {.}         
  { }         
  {}          
\newtheoremstyle{propositional}%
  {\topsep}   
  {\topsep}   
  {\itshape}  
  {}          
  {\bfseries} 
  {.}         
  { }         
  {}          
\theoremstyle{propositional}
\newtheorem{thm}{Theorem}[section]
\newtheorem{prop}[thm]{Proposition}
\newtheorem{lem}[thm]{Lemma}
\newtheorem{cor}[thm]{Corollary}
\theoremstyle{descriptive}
\newtheorem{deff}[thm]{Definition}
\newtheorem{example}[thm]{Example}
\newtheorem{remark}[thm]{Remark}
\renewenvironment{proof}[1][\proofname]{\par
  \pushQED{\qed}%
  \normalfont
  \trivlist
  \item[\hskip\labelsep
        \itshape
    #1\@addpunct{.}]\ignorespaces
}{%
  \popQED\endtrivlist\@endpefalse
}
\newdimen\extrawidth
\def\iintlim#1#2{\setbox0\hbox{$\scriptstyle#1$}%
        \setbox1\hbox{$\scriptstyle#2$}%
        \extrawidth=\wd1 \advance\extrawidth-\wd0
        \ifdim\extrawidth<0pt \extrawidth=0pt\fi%
        \int_{#1\kern\extrawidth \kern .5em}^{#2\kern -\wd1} \kern -.5em%
}
\def\vint{\mathop{\mathchoice%
          {\setbox0\hbox{$\displaystyle\intop$}\kern 0.22\wd0%
           \vcenter{\hrule width 0.6\wd0}\kern -0.82\wd0}%
          {\setbox0\hbox{$\textstyle\intop$}\kern 0.2\wd0%
           \vcenter{\hrule width 0.6\wd0}\kern -0.8\wd0}%
          {\setbox0\hbox{$\scriptstyle\intop$}\kern 0.2\wd0%
           \vcenter{\hrule width 0.6\wd0}\kern -0.8\wd0}%
          {\setbox0\hbox{$\scriptscriptstyle\intop$}\kern 0.2\wd0%
           \vcenter{\hrule width 0.6\wd0}\kern -0.8\wd0}}%
          \mathopen{}\int}
\newcommand{\setm}{\setminus}
\renewcommand{\emptyset}{\varnothing}
\def\cprime{{\mathsurround0pt$'$}}
\newcommand{\Cp}{{C_p}}
\newcommand{\Cone}{{C_1}}
\DeclareMathOperator{\diam}{diam}
\DeclareMathOperator{\Div}{div}
\DeclareMathOperator{\dvg}{div}
\DeclareMathOperator{\dist}{dist}
\DeclareMathOperator{\Lip}{Lip}
\DeclareMathOperator*{\osc}{osc}
\newcommand{\grad}{\nabla}
\newcommand{\bdry}{\partial}
\newcommand{\bdy}{\bdry}
\newcommand{\loc}{_{\rm loc}}
{\catcode`p =12 \catcode`t =12 \gdef\eeaa#1pt{#1}}      
\def\accentadjtext#1{\setbox0\hbox{$#1$}\kern   
                \expandafter\eeaa\the\fontdimen1\textfont1 \ht0 }
\def\accentadjscript#1{\setbox0\hbox{$#1$}\kern 
                \expandafter\eeaa\the\fontdimen1\scriptfont1 \ht0 }
\def\accentadjscriptscript#1{\setbox0\hbox{$#1$}\kern   
                \expandafter\eeaa\the\fontdimen1\scriptscriptfont1 \ht0 }
\def\accentadjtextback#1{\setbox0\hbox{$#1$}\kern       
                -\expandafter\eeaa\the\fontdimen1\textfont1 \ht0 }
\def\accentadjscriptback#1{\setbox0\hbox{$#1$}\kern     
                -\expandafter\eeaa\the\fontdimen1\scriptfont1 \ht0 }
\def\accentadjscriptscriptback#1{\setbox0\hbox{$#1$}\kern 
                -\expandafter\eeaa\the\fontdimen1\scriptscriptfont1 \ht0 }
\def\itoverline#1{{\mathsurround0pt\mathchoice
        {\rlap{$\accentadjtext{\displaystyle #1}
                \accentadjtext{\vrule height1.593pt}
                \overline{\phantom{\displaystyle #1}
                \accentadjtextback{\displaystyle #1}}$}{#1}}
        {\rlap{$\accentadjtext{\textstyle #1}
                \accentadjtext{\vrule height1.593pt}
                \overline{\phantom{\textstyle #1}
                \accentadjtextback{\textstyle #1}}$}{#1}}
        {\rlap{$\accentadjscript{\scriptstyle #1}
                \accentadjscript{\vrule height1.593pt}
                \overline{\phantom{\scriptstyle #1}
                \accentadjscriptback{\scriptstyle #1}}$}{#1}}
        {\rlap{$\accentadjscriptscript{\scriptscriptstyle #1}
                \accentadjscriptscript{\vrule height1.593pt}
                \overline{\phantom{\scriptscriptstyle #1}
                \accentadjscriptscriptback{\scriptscriptstyle #1}}$}{#1}}}}
\newcommand{\al}{\alpha}
\newcommand{\alp}{\alpha}
\newcommand{\ga}{\gamma}
\newcommand{\de}{\delta}
\newcommand{\la}{\lambda}
\newcommand{\sig}{\sigma}
\newcommand{\Om}{\Omega}
\renewcommand{\phi}{\varphi}
\newcommand{\p}{{$p\mspace{1mu}$}}
\newcommand{\R}{\mathbf{R}}
\newcommand{\Q}{\mathbf{Q}}
\newcommand{\limplus}{{\mathchoice{\raise.17ex\hbox{$\scriptstyle +$}}
                {\raise.17ex\hbox{$\scriptstyle +$}}
                {\raise.1ex\hbox{$\scriptscriptstyle +$}}
                {\scriptscriptstyle +}}}
\newcommand{\Np}{N^{1,p}}
\newcommand{\Nploc}{N^{1,p}\loc}
\newcommand{\ut}{\tilde{u}}
\newcommand{\Bh}{\widehat{B}}
\newcommand{\yh}{\hat{y}}
\newcommand{\Ga}{\Gamma}
\newcommand{\Lploc}{L^p\loc}
\newcommand{\La}{\Lambda}
\newcommand{\Xhat}{{\widehat{X}}}
\newcommand{\Ndistloc}{\Np_{\rm loc,dist}}
\newcommand{\Ndisto}{\Np_{\rm 0,dist}}
\newcommand{\Gdist}{{\mathcal G}_{\rm dist}}
\newcommand{\CpX}{{C_p^X}}
\newcommand{\wt}{\widetilde{w}}
\newcommand{\up}{u_\limplus}
\numberwithin{equation}{section}
\newcommand{\eqv}{\mathchoice{\quad \Longleftrightarrow \quad}{\Leftrightarrow}
                {\Leftrightarrow}{\Leftrightarrow}}
\newcommand{\imp}{\mathchoice{\quad \Longrightarrow \quad}{\Rightarrow}
                {\Rightarrow}{\Rightarrow}}
\newenvironment{ack}{\medskip{\it Acknowledgement.}}{}
\begin{document}

\authortitle{Anders Bj\"orn, Jana Bj\"orn
    and Nageswari Shanmugalingam}
{The Liouville theorem for \p-harmonic functions 
and quasiminimizers
with finite energy}
\author{
Anders Bj\"orn \\
\it\small Department of Mathematics, Link\"oping University, \\
\it\small SE-581 83 Link\"oping, Sweden\/{\rm ;}
\it \small anders.bjorn@liu.se
\\
\\
Jana Bj\"orn \\
\it\small Department of Mathematics, Link\"oping University, \\
\it\small SE-581 83 Link\"oping, Sweden\/{\rm ;}
\it \small jana.bjorn@liu.se
\\
\\
Nageswari Shanmugalingam
\\
\it \small  Department of Mathematical Sciences, University of Cincinnati, \\
\it \small  P.O.\ Box 210025, Cincinnati, OH 45221-0025, U.S.A.\/{\rm ;}
\it \small  shanmun@uc.edu
}

\date{}
\maketitle

\noindent{\small
{\bf Abstract} 
We show that, under certain geometric conditions,
there are no nonconstant quasiminimizers 
with finite  $p$th power
energy in a
(not necessarily complete)
metric measure space
equipped with a globally doubling measure supporting a
global \p-Poincar\'e inequality. 
The geometric conditions are 
that either (a) the measure 
has a sufficiently strong volume growth at infinity, or
(b) 
the metric space is annularly quasiconvex (or its discrete version, 
annularly chainable)
around some point in the space. 
Moreover, on
the weighted real line $\R$, we characterize
all locally doubling measures, 
supporting a local \p-Poincar\'e inequality,  for which
there exist nonconstant quasiminimizers of finite \p-energy, and 
show that a quasiminimizer is of finite \p-energy if and only if it is
bounded.
As \p-harmonic functions are quasiminimizers they are
covered by these results.
}

\bigskip

\noindent
{\small \emph{Key words and phrases}:
annular quasiconvexity,
doubling measure,
finite \p-energy,
Liouville theorem,
metric measure space,
\p-harmonic function,
Poincar\'e inequality,
quasiharmonic function,
quasiminimizer,
weak maximum principle.
}

\medskip

\noindent
{\small Mathematics Subject Classification (2010):
Primary: 
31E05; Secondary: 
30L99,
31C45,
35J20,
35J92,
49Q20.
}

\section{Introduction}

The Liouville theorem in classical complex analysis states that there is no 
bounded nonconstant holomorphic function on
the entire complex plane. 
Its analogue for harmonic functions says
that there is no 
bounded (or positive) nonconstant harmonic function 
on the entire Euclidean space $\R^n$. 
This latter Liouville theorem is a consequence of the fact that 
positive
harmonic functions on the Euclidean space satisfy a Harnack type inequality. 

Harnack inequalities hold also for solutions of many nonlinear differential
equations, such as the \p-Laplace equation
\[
\Delta_p u:= \dvg(|\grad u|^{p-2}\grad u)=0, \quad 1<p<\infty,
\]
whose (continuous) solutions are \p-harmonic functions.
It then follows that every positive \p-harmonic 
function on the entire Euclidean space  $\R^n$ must be constant.
A similar conclusion holds for global solutions of the
$\mathcal{A}$-harmonic equation $\dvg \mathcal{A}(x,\grad u)=0$
with $\mathcal{A}$ of \p-Laplacian type, whose theory in  weighted
$\R^n$ has been developed in Heinonen--Kilpel\"ainen--Martio~\cite{HeKiMa}.
Note that 2-harmonic functions
on unweighted $\R^n$ are just the classical harmonic functions.
For quasilinear  equations and systems on $\R^n$ (with $p=2$), bounded
Liouville theorems and their connection to regularity of solutions
were studied in e.g.\ \cite{HildWid}, \cite{Kawohl} and \cite{Meier}.

In the setting of certain Riemannian manifolds, Harnack inequalities 
leading 
to the Liouville theorem
for positive \p-harmonic functions
were 
considered in 
Coulhon--Holopainen--Saloff-Coste~\cite{CHSC01}.
A similar Liouville theorem
on graphs, whose 
(counting) measure is globally
doubling and supports a global \p-Poincar\'e
inequality, was obtained 
in Holo\-painen--Soardi~\cite{HoSo2}.
In the last two decades, Harnack inequalities  
for \p-harmonic 
functions  and quasiminimizers were extended to metric spaces
equipped with a globally doubling measure supporting a
global \p-Poincar\'e inequality, 
see~Kinnunen--Shanmugalingam~\cite{KiSh01}, 
Bj\"orn--Marola~\cite{BMarola} and Bj\"orn--Bj\"orn~\cite{BBbook}.
Thus we know that the
Liouville theorem holds for positive quasiminimizers also 
in such settings. 
Recently, a combinatorial analogue of harmonic functions ($p=2$)
was developed in Ntalampekos~\cite{Ntalam} 
for generalized Sierpi\'nski carpets,
where the standard Poincar\'e inequality might not hold, and 
the bounded Liouville theorem for such functions was established
therein, see~\cite[Theorem~2.74]{Ntalam}. 

Since \p-harmonic functions are closely related to (and in fact are local
minimizers of) \p-energy integrals, it is natural to ask whether there 
are nonconstant \p-harmonic functions
(or quasiminimizers) with finite \p-energy
on these metric measure spaces.
Such a finite-energy Liouville theorem
for \p-harmonic functions on
Riemannian manifolds with nonnegative Ricci curvature
was obtained in
Nakauchi~\cite{Nak08} for $p \ge 2$.
See also Holopainen~\cite{Holo00}, Holopainen--Pigola--Veronelli~\cite{HoPiVe}
and Pigola--Rigoli--Setti~\cite{PiRiSe08}
for Liouville type theorems on such manifolds under various other constraints
on the \p-harmonic functions.
The manifolds in these papers, as well as in 
  \cite{CHSC01}, are all equipped with the 
Riemannian length metric and the corresponding volume measure.

Bounded, positive and $L^q$-Liouville thorems for harmonic
  functions ($p=2$) and nonlinear eigenvalue problems ($p>1$) on
  certain weighted complete Riemannian manifolds were established in e.g.\ 
\cite{Li05}, \cite{Wa3Zh} and \cite{Wu}.
See also \cite{Gar}, \cite{LiSch}, \cite{Yau} and the references
  therein for bounded and $L^q$-Liouville theorems for (sub)harmonic 
functions ($p=2$) on unweighted complete manifolds.

The primary focus of this paper is to see under 
which geometric conditions on the underlying metric measure space
the finite-energy Liouville theorem holds
for \p-harmonic functions and quasiminimizers. 
When the bounded Liouville theorem holds,
answering this question boils down to finding out whether there are 
unbounded \p-harmonic functions or quasiminimizers with finite energy.

Here, and in the rest of the paper, $1<p<\infty$ is fixed.
By  a 
\emph{quasiminimizer}
we mean a function that quasiminimizes the \p-energy, i.e.\ 
there exists
$Q\ge1$ such that for all test functions $\phi$, 
\begin{equation}  \label{eq-qmin-intro}
      \int_{\phi\ne0} g^p_u \, d\mu 
           \le Q \int_{\phi\ne0} g_{u+\phi}^p \, d\mu,
\end{equation}
where $g_u$ stands for the minimal \p-weak upper gradient of $u$,
see Definition~\ref{def-qmin}.
Quasiminimizers were introduced in Giaquinta--Giusti~\cite{GG1}, \cite{GG2}
  as a unified treatment of 
variational inequalities, elliptic
partial differential equations and
  quasiregular mappings, 
  see \cite{JBqmin15} for further references.
The following is the first main theorem of this paper.

\begin{thm} \label{thm-intro}
\textup{(Finite-energy Liouville theorem)}
Let $X$ be a metric space equipped with a globally doubling measure $\mu$
supporting a global \p-Poincar\'e inequality.
Assume that one of the following 
conditions holds\/\textup{:}
\begin{enumerate}
\item \label{a1} There is a point $x_0\in X$  and 
an exponent $\alpha\ge p$ such that
\begin{equation}    \label{eq-vol-growth}
    \limsup_{r \to \infty} \frac{\mu(B(x_0,r))}{r^\al} > 0,
\end{equation}
i.e.\ $\mu$ has volume growth of exponent $\alpha$ at infinity.
\item \label{a2}
$X$ is annularly quasiconvex around some point $x_0 \in X$
\textup{(}see Definition~\ref{def:ann-qcvx}\textup{)}. 
\item \label{a3}
$X=(\R,\mu)$, where $\mu$ is a 
globally doubling measure supporting a 
global \p-Poincar\'e inequality.
\item \label{a4}
$X$ is bounded.
\end{enumerate}
Then every quasiminimizer on $X$ with finite energy
is constant\/ \textup{(}up to a set of zero \p-capacity\/\textup{)}.
\end{thm}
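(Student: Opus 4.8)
The plan is to reduce the theorem to a boundedness statement and then to treat the four geometric conditions with quite different tools. Let $u$ be a quasiminimizer on $X$ with $E:=\int_X g_u^p\,d\mu<\infty$. By the regularity theory for quasiminimizers in this setting (De Giorgi–Nash–Moser estimates and Harnack's inequality, see \cite{KiSh01}, \cite{BMarola}, \cite{BBbook}), after redefining $u$ on a set of \p-capacity zero we may assume that $u$ is continuous and that every nonnegative quasiminimizer satisfies Harnack's inequality with a constant independent of the ball (this uses that the doubling property and the Poincar\'e inequality are assumed globally). The first step will be to observe that a \emph{bounded} quasiminimizer is constant: since $g_u$ and $g_{u+\phi}$ are unchanged when a constant is added to $u$, the function $v:=u-\inf_X u$ is again a quasiminimizer, and $v\ge0$ with $\inf_X v=0$; applying the scale-invariant Harnack inequality on $B(x_0,R)$ and letting $R\to\infty$ gives, for every fixed $r$, that $\osc_{B(x_0,r)}u\le\sup_{B(x_0,r)}v\le C\inf_{B(x_0,R)}v\to C\inf_X v=0$, so $u$ is constant. (Here $X$ is connected, since it supports a Poincar\'e inequality.) Thus it will suffice to prove, under each of (a1), (a3), (a4), that $u$ is bounded, while (a2) will be handled directly.

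For (a4), I would use that a bounded space carrying a globally doubling measure is totally bounded, hence a finite union of small balls; since quasiminimizers are locally bounded (local boundedness of quasisub- and quasisuperminimizers, \cite{KiSh01}), $u$ is bounded. For (a1), I would combine the local boundedness estimate $\sup_{\frac12 B}|u-u_B|\le C\bigl(\vint_B|u-u_B|^p\,d\mu\bigr)^{1/p}$ for quasiminimizers with the $L^p$ self-improvement of the \p-Poincar\'e inequality to obtain
\[
   \osc_{B(x_0,r/2)}u \le C\biggl(\frac{r^p}{\mu(B(x_0,r))}\int_{\la B(x_0,r)}g_u^p\,d\mu\biggr)^{1/p}\le C\biggl(\frac{r^p E}{\mu(B(x_0,r))}\biggr)^{1/p},
\]
and then evaluate along radii $r_k\to\infty$ realizing the $\limsup$ in \eqref{eq-vol-growth}, so that $\mu(B(x_0,r_k))\ge c\,r_k^{\al}$; since $\al\ge p$ this yields $\osc_{B(x_0,r_k/2)}u\le C'E^{1/p}r_k^{(p-\al)/p}\le C'E^{1/p}$, and letting $k\to\infty$ gives $\osc_X u<\infty$, i.e.\ $u$ is bounded. (If $\al>p$ the bound even tends to $0$, so $u$ is already constant.)

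For (a2) I would show $E=0$ directly. The key ingredient — and, together with (a3), the main technical obstacle — is a \p-Poincar\'e inequality on annuli with data independent of the scale, obtained from annular quasiconvexity (or annular chainability) around $x_0$ combined with the global doubling property and \p-Poincar\'e inequality: with $A_R:=B(x_0,2R)\setm B(x_0,R)$, there is a constant $C$ (and possibly a scale-independent enlargement of the annulus, which I suppress) with $\int_{A_R}|u-u_{A_R}|^p\,d\mu\le C R^p\int_{A_R}g_u^p\,d\mu$ for all $R$. Granting this, take the cutoff $\eta_R$ equal to $1$ on $B(x_0,R)$, to $0$ outside $B(x_0,2R)$, with $g_{\eta_R}\le2/R$, and insert $\phi:=-\eta_R(u-u_{A_R})$ into \eqref{eq-qmin-intro} (approximating $\phi$ by compactly supported functions if required). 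Since $u+\phi\equiv u_{A_R}$ on $B(x_0,R)$, while $g_{u+\phi}\le g_u+2|u-u_{A_R}|/R$ a.e.\ on $A_R$ and $g_u=0$ a.e.\ on $\{u=u_{A_R}\}$, the quasiminimizing inequality together with the annular Poincar\'e inequality give
\[
   \int_{B(x_0,R)}g_u^p\,d\mu \le Q\int_{A_R}\bigl(g_u+2|u-u_{A_R}|/R\bigr)^p\,d\mu \le C_{Q,p}\int_{A_R}g_u^p\,d\mu .
\]
No lower bound on $\mu(B(x_0,R))$ is needed here: the factor $R^p$ coming from $\eta_R$ is cancelled by the one in the annular Poincar\'e inequality. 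Since $A_R\subseteq X\setm B(x_0,R)$ and $E<\infty$, the right-hand side tends to $0$ as $R\to\infty$ while the left-hand side tends to $E$; hence $E=0$ and $u$ is constant.

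Finally, for (a3), on $X=(\R,\mu)$ the annuli $B(x_0,2r)\setm B(x_0,r)$ are disconnected and $\mu$ may have volume growth of exponent $<p$, so neither of the two preceding arguments applies. Here I would use the one-dimensional structure $g_u=|u'|$ a.e.: after identifying precisely which globally doubling measures $\mu=w\,dx$ on $\R$ support a global \p-Poincar\'e inequality, one tests \eqref{eq-qmin-intro} with functions that flatten $u$ to a constant on a far-out interval $[a,b]$ and combines this with H\"older's inequality relative to $w$ to conclude that $\int_\R|u'|^p\,d\mu<\infty$ forces $u$ to have finite limits at $\pm\infty$, whence $u$ is bounded and, by the first step, constant. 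I expect the genuine difficulties to be exactly (i) deriving the scale-invariant \p-Poincar\'e (or chaining) inequality on annuli from annular quasiconvexity in (a2) — a uniformization/chaining argument — and (ii) in (a3), the sharp description of the admissible weights on $\R$ together with the flattening estimate, in particular handling weights that are unbounded from above and/or below.
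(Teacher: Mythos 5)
Your reduction to boundedness via the scale-invariant Harnack inequality is exactly the paper's use of the positive Liouville theorem (Theorem~\ref{thm-Liouville}), and your treatment of \ref{a1} is essentially the paper's: Lemma~\ref{lem-key} derives the same oscillation--energy estimate (from the weak Harnack inequality rather than from local boundedness plus self-improvement) and evaluates it along radii realizing the $\limsup$ in \eqref{eq-vol-growth}. For \ref{a4} the paper is more direct: on a bounded space $\Ndisto(X)=\Np(X)$, so one may test with $\phi=-u$ and conclude $g_u=0$ at once (Proposition~\ref{prop-bdd-X}); your route through total boundedness and local boundedness also works but is longer.

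For \ref{a2} your argument is genuinely different and valid in outline. The paper never proves a Poincar\'e inequality on annuli; instead it chains a bounded number of balls of radius comparable to $r$ across the annulus (Lemma~\ref{lem-short-chain-new}), applies the oscillation estimate of Lemma~\ref{lem-key} on each ball to control $|u(x)-u(y)|$ for $x,y\in\bdy B(x_0,r)$, upgrades this to $\osc_{B(x_0,r)}u$ via the weak maximum principle (Theorem~\ref{thm-weak-max}, which needs a separate proof in the noncomplete setting), and converts oscillation back into energy with the Caccioppoli inequality (Lemma~\ref{lem-key-lower}). The outcome is the same hole-filling estimate, energy on $B(x_0,r/2)$ bounded by energy on $B(x_0,2\La r)\setm B(x_0,r/2\La)$, that you obtain from the cutoff test function. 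Your version avoids the weak maximum principle, which is a real simplification, but shifts the burden onto the scale-invariant annular \p-Poincar\'e inequality; that inequality does follow from annular quasiconvexity by essentially the same covering/chaining lemma the paper proves, so the two proofs have comparable content. Note also that the paper works under the weaker hypothesis of sequential annular chainability (which is what strong Poincar\'e inequalities yield in noncomplete spaces); your argument adapts, since the chaining can be run with discrete chains of balls instead of curves.

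The one genuine gap is in \ref{a3}. The step ``finite energy forces $u$ to have finite limits at $\pm\infty$, whence $u$ is bounded'' cannot come from H\"older's inequality when $\int_0^\infty w^{1/(1-p)}\,dx=\infty$, and under the \emph{global} hypotheses of \ref{a3} this divergence always holds: if the integral converged, $U(x):=\int_0^x w^{1/(1-p)}\,dt$ would be a bounded (or one-sidedly bounded) nonconstant \p-harmonic function, contradicting Theorem~\ref{thm-Liouville}. So the case your H\"older argument handles never occurs, and in the case that does occur a finite-energy function need not have limits at infinity at all. What closes the argument in the paper is a two-sided comparison with the explicit \p-harmonic function $U$ (Lemma~\ref{lem-p-harm-R-mu}): finite energy first gives the decay $|v(x)|=o\bigl(U(x)^{1-1/p}\bigr)$ by comparing energies on $[x_0,x_1]$ with the competitor $aU+b$ having the same boundary values (Lemma~\ref{lem-w-imp-v}), and then testing the quasiminimizing property of $v$ on $[0,x_j]$ against $a_jU$ yields $\int_0^{x_j}|v'|^p\,d\mu\le Q\,|v(x_j)|^p/U(x_j)^{p-1}\to0$, so $v$ is constant outright (Lemma~\ref{lem-m=0-v-const}). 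Your ``flattening to a constant on a far-out interval'' is a Caccioppoli-type test and does not produce this comparison; the essential missing ingredient is the explicit one-dimensional formula for \p-harmonic functions and the comparison against $aU+b$.
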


In fact, in case \ref{a4}, there are no (essentially) nonconstant
quasiminimizers whatsoever, see Proposition~\ref{prop-bdd-X}.
We will show by examples that if $\mu$ supports only local versions of the
doubling condition and the \p-Poincar\'e inequality, then
the bounded, positive
and finite-energy 
Liouville theorems can fail, even for weighted $\R^n$, $n \ge 1$. 
For measures on the real line $\R$
satisfying such local
assumptions 
we will also show that, surprisingly, 
the bounded and finite-energy Liouville theorems
are equivalent, but the bounded and positive ones are not,
see Theorems~\ref{thm-weighted-R-char}
and~\ref{thm-weighted-R-char-intro-2}.
We  therefore 
distinguish between these three types of
Liouville theorems. 
  
A key ingredient in our proof of Theorem~\ref{thm-intro}
is an estimate which follows from the weak Harnack inequality and controls
the oscillation of $u$ on balls in terms of its energy,
see Lemmas~\ref{lem-key} and \ref{lem-key-lower}.
Combined with the volume growth  \eqref{eq-vol-growth} or applied to chains
of balls provided by the annular quasiconvexity, it leads to parts
\ref{a1} and \ref{a2} of Theorem~\ref{thm-intro}.
A similar, but more precise, estimate for \p-harmonic functions
with respect to ends in certain 
complete Riemannian manifolds was given 
in Holopainen~\cite[Lemma~5.3]{HoDuke}.
As a byproduct of the proof of Theorem~\ref{thm-intro} 
we obtain lower bounds for the growth of the energy and oscillation 
of nonconstant quasiminimizers on large balls, see
Corollaries~\ref{cor-energy-growth-4},
\ref{cor-energy-growth} and~\ref{cor-osc-beta-growth}.
The global estimates in this paper
can also be applied more locally 
to capture the geometry of the space in different directions
towards infinity
(so-called ends).
We pursue this line of research  in our forthcoming paper
\cite{BBShypend}.

The geometric conditions~\ref{a1} and~\ref{a2} are quite natural. 
A condition similar to~\ref{a2}, assuming that the diameters of spheres 
grow sublinearly,
was 
recently used
to prove a Liouville type theorem for harmonic functions of polynomial 
growth on certain weighted complete Riemannian manifolds,
see Wu~\cite[Theorem~1.1]{JWu}.

The annular quasiconvexity from~\ref{a2} is clearly satisfied by 
weighted $\R^n$, $n\ge2$, and the case $n=1$ is covered by~\ref{a3}.
So Theorem~\ref{thm-intro} covers all weighted $\R^n$, $n\ge 1$, 
with globally \p-admissible weights, including the setting considered in
Heinonen--Kilpel\"ainen--Martio~\cite{HeKiMa}.
In complete spaces, annular quasiconvexity
follows from sufficiently strong Poincar\'e inequalities, by
Korte~\cite[Theorem~3.3]{Ko07}.
In Lemma~\ref{lem-qconv-Riikka} we show that in noncomplete spaces, such 
Poincar\'e inequalities imply 
a discrete analogue of the annular quasiconvexity, which also implies 
the conclusion of Theorem~\ref{thm-intro}.

Note that we do not require the space $X$ to be complete.
This makes our results applicable also e.g.\
in the setting of Carnot--Carath\'eodory spaces, which in general need not 
be complete but do support a global doubling 
condition and a global $1$-Poincar\'e inequality, see
Jerison~\cite[Theorem~2.1 and Remark, p.~521]{Jer86} and 
Franchi--Lu--Wheeden~\cite{FrLuWhe}. 
In this setting, \p-harmonic functions
were first studied in Capogna--Danielli--Garofalo~\cite{CaDaGa93}, 
and are solutions
to subelliptic equations on the original Euclidean spaces.
Carnot--Carath\'eodory spaces include Heisenberg groups and are themselves
special types of metric measure spaces that satisfy our global
assumptions; see Haj\l asz--Koskela~\cite[Section~11]{HaKo}
and Remark~\ref{rmk-gu}.
Many results about \p-harmonic functions on Carnot--Carath\'eodory spaces
are thus included in the corresponding theory on metric spaces, studied
in e.g.\
Shanmugalingam~\cite{Sh-harm}, Kinnunen--Shanmugalingam~\cite{KiSh01},
Bj\"orn--Marola~\cite{BMarola} and Bj\"orn--Bj\"orn~\cite{BBbook}.

Volume growth conditions at infinity, similar to \eqref{eq-vol-growth},  
have been used to classify 
so-called parabolic and hyperbolic ends in
metric
spaces and Riemannian manifolds, see e.g.\ 
\cite{CHSC01}, \cite{Grig99}, \cite{HoDuke} and~\cite{HK01}.
They also play a role in capacity estimates for large annuli 
\cite{BBLeh1} and are related to global Sobolev embedding  theorems
\cite[Theorem~5.50]{BBbook}.

In classical conformal geometry, 
Riemann surfaces have been classified according to
the nonexistence of nonconstant harmonic functions
which are  bounded, positive and/or of finite energy,
see e.g.\ \cite{BrKi}, \cite{Pfl} and \cite{SNWCh}.
Similar results for Riemannian manifolds can be found for example
in \cite{Grig82}, \cite[Section~13]{Grig99} and \cite{SNWCh}.
This theory has been extended to include \p-harmonic functions 
in~\cite{Anc}, \cite{HoThesis}, and to the setting of metric measure
spaces in~\cite{HK01} and~\cite{HS02}. 
The studies undertaken in these papers for metric measure spaces 
did not take into account the energy of 
the global \p-harmonic functions 
as
we do here.

We consider quasiminimizers in the Liouville theorem, which means
that our results directly apply also to
solutions of the $\mathcal{A}$-harmonic equation
\begin{equation}   \label{eq-A-harm}
\dvg\mathcal{A}(x,\nabla u)=0,
\end{equation}
where $\mathcal{A}:\R^n\times \R^n\to \R^n$ is a vector field that satisfies 
certain ellipticity conditions associated with the index $p$ and a
globally \p-admissible weight $w$, as in 
Heinonen--Kilpel\"ainen--Martio~\cite{HeKiMa}.
Note that by \cite[Section~3.13]{HeKiMa}, such $\mathcal{A}$-harmonic
functions are quasiminimizers.
Since Riemannian manifolds of nonnegative curvature satisfy a global
doubling condition
and a global $1$-Poincar\'e inequality, \p-harmonic and
$\mathcal{A}$-harmonic (in the sense of \eqref{eq-A-harm}) functions on
such manifolds can be treated by Theorem~\ref{thm-intro} as well.

Another reason for  including quasiminimizers in our study
comes from geometric considerations similar to those described above.
The geometric programme of classifying metric measure spaces according 
to quasiconformal equivalences
seeks to identify two metric measure spaces as being equivalent if there 
is a quasiconformal homeomorphism between them. 
It follows from~\cite[Section~7]{HeKo98}, 
\cite[Theorem~9.10]{HKST01} and \cite[Theorem~4.1]{KMS} 
that given two 
uniformly locally Ahlfors \p-regular proper metric
spaces supporting uniformly local
\p-Poincar\'e inequalities,
any quasiconformal homeomorphism between them
induces a morphism between the corresponding classes
of quasiminimizers 
with finite energy. 
However, it
does not in general induce a morphism between the classes of \p-harmonic functions. 
Now if one of the two spaces supports a nonconstant
quasiminimizer
with finite energy but the other does not, then
there can be no quasiconformal equivalence between them. 
Thus the results developed in this paper give a 
useful tool in 
quasiconformal geometry and provide a
framework for potential-theoretic 
classifications of unbounded metric measure spaces.
Such a 
study is currently being carried out by the authors in 
\cite{BBShypend}.

On the unweighted real line
$\R$, both the volume growth condition~\eqref{eq-vol-growth},
with $\al\ge p>1$,
and the annular quasiconvexity 
fail.
At the same time, the only \p-harmonic functions on $\R$ are affine functions, 
and for such 
functions the global energy is clearly infinite unless the function 
is constant. 
Even in this simple setting, it is not trivial to show that there are no 
nonconstant \emph{quasiminimizers} with finite energy, 
but we do so in Section~\ref{sect-c} when proving
Theorem~\ref{thm-intro}\,\ref{a3}.
A similar question for the unweighted strip $\R\times[0,1]$ is adressed in 
Example~\ref{ex-RxI}.
Note that even on the unweighted real line, quasiminimizers have 
a rich theory, see e.g.\ Martio--Sbordone~\cite{MaSb}.

On weighted $\R$, equipped with a locally doubling measure
supporting a local \p-Poincar\'e inequality, we give the following
complete characterization of when
the bounded and finite-energy Liouville theorems hold.
Under the conditions considered in this paper, any
quasiminimizer has a continuous representative, 
which is called \emph{quasiharmonic}.
(The discussion above should correctly be for these quasiharmonic
representatives.)

\begin{thm}\label{thm-weighted-R-char}
Let $\mu$ be  a locally 
doubling measure on $\R$ supporting a local \p-Poincar\'e inequality.
Then the following are equivalent\/\textup{:}
\begin{enumerate}
\item \label{c-a}
There exists a bounded nonconstant \p-harmonic function on $(\R,\mu)$.
\item \label{c-b}
There exists a nonconstant \p-harmonic function with finite energy 
on $(\R,\mu)$.
\item \label{c-c}
There exists a  bounded nonconstant quasiharmonic function on $(\R,\mu)$.
\item \label{c-d}
There exists a nonconstant quasiharmonic function with finite energy 
on $(\R,\mu)$.
\item \label{c-e}
There is a weight $w$ such that $d\mu = w \, dx$ and
\begin{equation} \label{eq-R-bdd-L}
    \int_{-\infty}^\infty w^{1/(1-p)} \, dx < \infty.
\end{equation}
\end{enumerate}
\end{thm}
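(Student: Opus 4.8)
\emph{Setup and main tool.} Write $d\mu=w\,dx$. The local doubling condition together with the local \p-Poincar\'e inequality forces $0<w<\infty$ a.e.\ and $w^{1/(1-p)}\in L^1\loc(\R)$, so that $\Nploc(\R)$ consists of the locally absolutely continuous functions of locally finite energy and $g_u=|u'|$ a.e.\ (see \cite{BBbook}). The key idea is the substitution
\begin{equation*}
  s=G(t):=\int_0^t w(\tau)^{1/(1-p)}\,d\tau,
\end{equation*}
an increasing homeomorphism of $\R$ onto the open interval $I^*:=(c,c')$ with $c'-c=\int_\R w^{1/(1-p)}\,dx\in(0,\infty]$, so that \ref{c-e} holds exactly when $I^*$ is bounded. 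Since $u'(t)=\tilde u'(s)\,w(t)^{1/(1-p)}$ and $w(t)\,dt=w(t)^{p/(p-1)}\,ds$ under this substitution, the correspondence $u\mapsto\tilde u:=u\circ G^{-1}$ satisfies $\int_A g_u^p\,d\mu=\int_{G(A)}g_{\tilde u}^p\,ds$ for every measurable $A\subset\R$, maps $\Nploc(\R)$ onto $\Nploc(I^*)$ and $\Np_0(\Om')$ onto $\Np_0(G(\Om'))$ for $\Om'\Subset\R$, and preserves constancy, boundedness and finite energy. Hence $u$ is a $Q$-quasiminimizer (equivalently, quasiharmonic, resp.\ \p-harmonic) on $(\R,\mu)$ if and only if $\tilde u$ is so on the \emph{unweighted} interval $I^*$. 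The first step is to set this transfer up carefully; it is essentially bookkeeping, but one must check that minimal \p-weak upper gradients and the spaces $\Nploc$ and $\Np_0$ really do transform as claimed under this (only locally bi-Lipschitz) change of variable.

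\emph{The implications.} If \ref{c-e} holds, then $I^*$ is bounded and $G$ itself works: it is nonconstant, bounded (its range is $I^*$), \p-harmonic on $(\R,\mu)$ --- because $\tilde G(s)=s$ is affine, hence \p-harmonic, on $I^*$ --- and has energy $\int_\R g_G^p\,d\mu=\int_\R w^{1/(1-p)}\,dx=|I^*|<\infty$; as \p-harmonic functions are quasiharmonic this yields \ref{c-a}--\ref{c-d} at once. For \ref{c-c}$\Rightarrow$\ref{c-e} (and thus \ref{c-a}$\Rightarrow$\ref{c-e}, since \ref{c-a}$\Rightarrow$\ref{c-c}) I use the elementary one-dimensional bound
\begin{equation*}
  \int_a^b g_u^p\,d\mu\le Q\,|u(b)-u(a)|^p\Bigl(\int_a^b w^{1/(1-p)}\,dx\Bigr)^{1-p}\qquad(a<b),
\end{equation*}
valid for any quasiharmonic $u$ and obtained by comparing $u$ on $[a,b]$ with the \p-harmonic function sharing its boundary values (extended by $u$ outside), whose energy equals the right-hand side with $Q=1$. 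If $u$ is bounded, nonconstant and \ref{c-e} fails, then letting $a\to-\infty$ and $b\to\infty$ sends the right-hand side to $0$ while the left-hand side is nondecreasing and eventually positive, a contradiction. For \ref{c-d}$\Rightarrow$\ref{c-e} (hence \ref{c-b}$\Rightarrow$\ref{c-e}) assume \ref{c-e} fails; then $I^*$ is unbounded, and a finite-energy quasiharmonic $u$ on $(\R,\mu)$ transfers to a finite-energy quasiharmonic $\tilde u$ on $I^*$. If $I^*=\R$, then with Lebesgue measure it is globally doubling and supports a global $1$-Poincar\'e inequality, so Theorem~\ref{thm-intro}\,\ref{a3} makes $\tilde u$, hence $u$, constant (everywhere, being continuous). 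If $I^*$ is a half-line, the H\"older bound $|\tilde u(s)-\tilde u(s')|\le|s-s'|^{1-1/p}(\int_{I^*}g_{\tilde u}^p\,ds)^{1/p}$ shows $\tilde u$ extends continuously to the \emph{closed} half-line $\overline{I^*}$; this extension is a quasiminimizer on $\overline{I^*}$, which is complete, globally doubling, supports a global $1$-Poincar\'e inequality, and is annularly quasiconvex around its endpoint, so Theorem~\ref{thm-intro}\,\ref{a2} again forces it, and $u$, to be constant.

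\emph{Conclusion and main difficulty.} Putting the pieces together gives the cycles \ref{c-e}$\Rightarrow$\ref{c-a}$\Rightarrow$\ref{c-c}$\Rightarrow$\ref{c-e} and \ref{c-e}$\Rightarrow$\ref{c-b}$\Rightarrow$\ref{c-d}$\Rightarrow$\ref{c-e}, so all five statements are equivalent. I expect the main obstacle to be the half-line case of \ref{c-d}$\Rightarrow$\ref{c-e}: showing that a finite-energy quasiharmonic function on the open half-line extends to a bona fide quasiminimizer on the closed half-line (so that Theorem~\ref{thm-intro}\,\ref{a2} applies), a boundary-regularity point made delicate by the fact that the added endpoint has positive \p-capacity. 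A secondary difficulty is the careful verification of the change-of-variables transfer in the first step.
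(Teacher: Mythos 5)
Your substitution $s=G(t)$ is a natural idea, and parts of the argument are sound: the function $G$ you exhibit for \ref{c-e}$\Rightarrow$\ref{c-a}--\ref{c-d} is exactly the \p-harmonic function \eqref{eq-u} used in the paper (Lemma~\ref{lem-p-harm-R-mu}), and your proof of \ref{c-c}$\Rightarrow$\ref{c-e}, comparing $u$ on $[a,b]$ with the \p-harmonic function sharing its boundary values, is correct and close in spirit to Lemma~\ref{lem-m=0-v-const}. The trouble is that your central transfer claim --- ``$u$ is quasiharmonic on $(\R,\mu)$ if and only if $\tilde u$ is quasiharmonic on $I^*$'' --- is false whenever $I^*\ne\R$. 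The energy transforms correctly under $G$, but the $L^p$-norms do not, and the class $\Ndisto(I^*)$ (with $I^*$ as its own ambient space) contains test functions that do not vanish near a finite endpoint $c$ of $I^*$. These pull back to functions on $\R$ that fail to decay at the corresponding infinity, where $\mu$ has infinite mass: by H\"older, $\int_{-R}^0 w\,dx\ge R^p\bigl(\int_{-R}^0 w^{1/(1-p)}\,dx\bigr)^{1-p}\to\infty$ whenever $\int_{-\infty}^0 w^{1/(1-p)}\,dx<\infty$. Such pullbacks are therefore not in $\Np(\R,\mu)=\Ndisto(\R)$, so the quasiminimizing property of $u$ cannot be tested against them. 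A concrete counterexample to your equivalence: when \ref{c-e} holds, $I^*$ is a bounded interval, so by Proposition~\ref{prop-bdd-X} the only quasiharmonic functions on the metric space $I^*$ are constants, yet $G$ itself is a nonconstant \p-harmonic function on $(\R,\mu)$ with $\tilde G(s)=s$. What the transfer actually preserves is the quasiminimizing inequality against compactly supported test functions only.

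This is fatal precisely where you yourself anticipate trouble, namely the half-line case of \ref{c-d}$\Rightarrow$\ref{c-e}. The assertion that the continuous extension of $\tilde u$ to $\overline{I^*}=[c,\infty)$ is a quasiminimizer there cannot be obtained from the transfer, since establishing it requires testing against functions supported near $c$ that are nonzero at $c$ --- exactly the test functions that do not pull back. Nor can it be true for reasons independent of finite energy: the extension of $\tilde G(s)=s$ to $[c,\infty)$ is continuous but is \emph{not} a quasiminimizer on $[c,\infty)$, since by the argument in the example following Proposition~\ref{prop-bdd-X} every quasiharmonic function on a closed half-line is constant. So finite energy must enter essentially at this step, beyond merely supplying the continuous extension, and your proposal does not say how. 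The paper closes exactly this gap with Lemmas~\ref{lem-w-imp-v}\,\ref{it-v-lim-0} and~\ref{lem-m=0-v-const}: finite energy forces $|v(x)|/u(x)^{1-1/p}\to0$ with $u=G$, and then comparison of $v$ with the multiples $a_ju$ on the compact intervals $[0,x_j]$ (legitimate, compactly supported test functions) forces $v'=0$ a.e. You need this, or an equivalent argument carried out on the unweighted open half-line, to complete the proof. (Your reduction of the case $I^*=\R$ to Theorem~\ref{thm-intro}\,\ref{a3} is acceptable since unweighted $\R$ is proper and compactly supported test functions suffice there, but note that the paper proves Theorem~\ref{thm-intro}\,\ref{a3} by the same Section~\ref{sect-c} machinery, so this does not shorten the overall route.)
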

\medskip

We also show that under the local assumptions of 
Theorem~\ref{thm-weighted-R-char}, a quasiharmonic function
on weighted $\R$
is bounded if and only if it has finite energy, see 
Proposition~\ref{prop-finite-energy-bdd}.
This equivalence may be of independent interest, in addition to implying
the equivalence of the bounded and the finite-energy Liouville theorems 
on weighted~$\R$.

Examples~\ref{ex-bdd-inf-energy} resp.~\ref{ex:tree} show that
on some spaces there exist global \p-harmonic functions that are bounded but 
without finite
energy, and vice versa. 
For examples of Riemannian manifolds
where the finite-energy Liouville theorem for harmonic functions holds but 
not the bounded Liouville theorem, we refer to
Sario--Nakai--Wang--Chung~\cite[Section~1.2]{SNWCh}.

The Liouville theorem is often given for positive functions,
but this is not always equivalent to the bounded (or finite-energy)
Liouville theorem,
as demonstrated by the following result
(together with Theorem~\ref{thm-weighted-R-char}).

\begin{thm} \label{thm-weighted-R-char-intro-2}
Let $\mu$ be  a locally 
doubling measure on $\R$ supporting a local \p-Poincar\'e inequality.
Then the following are equivalent\/\textup{:}
\begin{enumerate}
\item \label{S-bdd}
There exists a positive nonconstant \p-harmonic function on $(\R,\mu)$.
\item \label{S-qmin-bdd}
There exists a positive nonconstant quasiharmonic function on $(\R,\mu)$.
\item \label{S-integral}
There is a weight $w$ such that $d\mu = w \, dx$ and
\begin{equation} \label{eq-R-pos-L}
    \min\biggl\{\int_{-\infty}^0 w^{1/(1-p)} \, dx,
     \int_0^\infty w^{1/(1-p)} \, dx
    \biggr\}< \infty.
\end{equation}
\end{enumerate}
\end{thm}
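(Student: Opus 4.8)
The plan is to establish the cycle \ref{S-bdd}$\Rightarrow$\ref{S-qmin-bdd}$\Rightarrow$\ref{S-integral}$\Rightarrow$\ref{S-bdd}. The implication \ref{S-bdd}$\Rightarrow$\ref{S-qmin-bdd} is immediate, since a \p-harmonic function on $\R$ is a continuous $1$-quasiminimizer and hence quasiharmonic. For \ref{S-integral}$\Rightarrow$\ref{S-bdd} I would use the explicit form of \p-harmonic functions on the line: writing $d\mu=w\,dx$, the Euler--Lagrange equation for the \p-energy reduces on any interval to $w|u'|^{p-2}u'\equiv\text{const}$, so the \p-harmonic functions are exactly $x\mapsto a+C\int_{x_0}^x w^{1/(1-p)}\,dt$ (here $w^{1/(1-p)}\in L^1\loc(\R)$ because $\mu$ supports a local \p-Poincar\'e inequality). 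Assuming $\int_0^\infty w^{1/(1-p)}\,dx=:M<\infty$ --- the other alternative in \eqref{eq-R-pos-L} being symmetric under $x\mapsto -x$, which interchanges the two integrals --- the function $u(x)=M+1-\int_0^x w^{1/(1-p)}\,dt$ is \p-harmonic on $\R$, strictly decreasing, hence nonconstant, and satisfies $u\ge 1$ on $[0,\infty)$ and $u\ge M+1$ on $(-\infty,0)$, so $u>0$. This proves \ref{S-bdd}.

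The substance is \ref{S-qmin-bdd}$\Rightarrow$\ref{S-integral}. The first step is to reduce to a monotone $u$: a quasiharmonic function on an interval in $\R$ is monotone, because a nonmonotone continuous function is nonconstant on some subinterval $[s,t]$ with $u(s)=u(t)$, and then the test function equal to $u(s)-u$ on $(s,t)$ and $0$ elsewhere turns the quasiminimizing inequality \eqref{eq-qmin-intro} into $\int_s^t w|u'|^p\,dx\le Q\cdot 0=0$, forcing $u$ to be constant on $[s,t]$ --- a contradiction. So, after possibly replacing $x$ by $-x$ (which swaps the two integrals in \eqref{eq-R-pos-L}), I may assume $u$ is nondecreasing; since $u>0$, it is then bounded below by $0$, and the goal becomes $\int_{-\infty}^0 w^{1/(1-p)}\,dx<\infty$.

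Suppose this integral were infinite. Since $w^{1/(1-p)}\in L^1\loc$, then $\int_{-\infty}^c w^{1/(1-p)}\,dx=\infty$ for every $c\in\R$. Fix $c$, and for $R>-c$ let $h_R$ be the \p-harmonic function on $(-R,c)$ with $h_R(-R)=u(-R)$ and $h_R(c)=u(c)$; the one-dimensional computation gives $\int_{-R}^c w|h_R'|^p\,dx=(u(c)-u(-R))^p/\bigl(\int_{-R}^c w^{1/(1-p)}\,dx\bigr)^{p-1}$. Using the test function $\phi$ equal to $h_R-u$ on $(-R,c)$ and $0$ elsewhere (admissible: continuous, compactly supported, of finite energy), so that $u+\phi=h_R$ on $(-R,c)$, the quasiminimizing inequality \eqref{eq-qmin-intro} (with $Q\ge1$) together with $0\le u(c)-u(-R)\le u(c)$ gives
\begin{align*}
    \int_{-R}^c w|u'|^p\,dx &\le Q\int_{-R}^c w|h_R'|^p\,dx \\
    &\le \frac{Q\,u(c)^p}{\bigl(\int_{-R}^c w^{1/(1-p)}\,dx\bigr)^{p-1}}.
\end{align*}
As $R\to\infty$ the right-hand side tends to $0$ while the left-hand side is nondecreasing in $R$, so $\int_{-R}^c w|u'|^p\,dx=0$ for every $R$, i.e.\ $u$ is constant on $(-\infty,c]$. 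Since $c$ was arbitrary, $u$ is constant on $\R$, contradicting \ref{S-qmin-bdd}. Hence $\int_{-\infty}^0 w^{1/(1-p)}\,dx<\infty$, which is \ref{S-integral}.

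The step I expect to be the main obstacle is the monotonicity reduction: it is precisely what converts the bare positivity of $u$ into the usable bound $u(c)-u(-R)\le u(c)$, and without it a quasiminimizer might a priori oscillate and be unbounded towards $-\infty$. It is reassuring that the constant-comparison argument settles monotonicity using only the variational structure, not the (merely local) doubling or Poincar\'e assumptions; a Harnack-type substitute would be delicate here, since under purely local hypotheses the Harnack constants need not stay bounded over large intervals. The rest --- the closed form and energy of one-dimensional \p-harmonic functions --- is routine.
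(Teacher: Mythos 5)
Your proof is correct and follows essentially the same route as the paper: the cycle \ref{S-bdd}$\Rightarrow$\ref{S-qmin-bdd}$\Rightarrow$\ref{S-integral}$\Rightarrow$\ref{S-bdd}, with the explicit antiderivative of $w^{1/(1-p)}$ furnishing the positive \p-harmonic function, and the key implication \ref{S-qmin-bdd}$\Rightarrow$\ref{S-integral} obtained by comparing the (monotone, hence one-sidedly bounded) quasiminimizer against the explicit \p-harmonic competitor with the same boundary values, whose energy $(u(c)-u(-R))^p/\bigl(\int_{-R}^c w^{1/(1-p)}\,dx\bigr)^{p-1}$ tends to $0$ when the integral diverges --- exactly the mechanism of the paper's Lemma~\ref{lem-m=0-v-const} and Proposition~\ref{prop-weighted-Om-char}. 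The only cosmetic differences are that you inline the half-line argument instead of quoting a separate proposition and derive monotonicity by a direct variational comparison rather than from the strong maximum principle.
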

\medskip

If $d\mu = w\,dx$ on $\R^n$ and $w$ is any positive  function
which is locally bounded from above and away from zero,
then it is easy to see that 
$\mu$ is locally doubling
and supports a local $1$-Poincar\'e inequality.
It follows that, for $n=1$, one can easily construct
weights such that \eqref{eq-R-pos-L} holds but \eqref{eq-R-bdd-L} fails.

The paper is organized as follows.
In Section~\ref{sect-prelim} we provide the necessary background about 
Sobolev type spaces on metric spaces.
In Section~\ref {sect-qmin} we discuss (quasi)minimizers and \p-harmonic
functions in spaces equipped with a locally doubling measure supporting
a local \p-Poincar\'e inequality.
Since it
is not assumed that the underlying metric space is complete,
the choice of test functions in~\eqref{eq-qmin-intro} plays a crucial role.
We prove a general weak maximum principle,
which despite its name does not follow from the strong maximum principle.
We also show that in bounded
spaces, all global quasiharmonic functions are locally constant;
from which Theorem~\ref{thm-intro}\,\ref{a4} follows (under the
global assumptions therein).

Sections~\ref{sect-a}--\ref{sect-c} are devoted to the proofs of
\ref{a1}--\ref{a3} of Theorem~\ref{thm-intro}, respectively.
Moreover, in Section~\ref{sect-pf-main-a2} we discuss connectivity properties
of the space, including 
a discrete version
of 
annular quasiconvexity.
Growth estimates for the energy and the oscillation of nonconstant 
quasiharmonic functions are also proved therein.
Section~\ref{sect-c} contains a rather exhaustive study of quasiharmonic
functions and functions with finite energy on $\R$, equipped with a
locally doubling measure supporting a local \p-Poincar\'e inequality,
leading up to the proofs of 
Theorems~\ref{thm-weighted-R-char}
and~\ref{thm-weighted-R-char-intro-2}.
Theorem~\ref{thm-intro}\,\ref{a3} is then a direct consequence of these
considerations.

We conclude the paper in Section~\ref{sect-further-ex} by showing that
the finite-energy Liouville theorem holds in the unweighted infinite strip 
$\R\times[0,1]$ and that it fails in a weighted binary tree, see
Examples~\ref{ex-RxI} and~\ref{ex:tree}.
The latter example also produces an unbounded \p-harmonic function
with 
finite energy.

\begin{ack}
The first two authors were supported by the Swedish Research Council
grants 2016-03424 and 621-2014-3974, respectively. 
The third
author was partially supported by the NSF grants DMS-1200915 and DMS-1500440. 
Part of the
research was done during several visits of the third author to 
Link\"oping University in 2015--18.
During parts of 2017 and 2018 she was also a guest 
professor at Link\"oping University,
partially
funded by the Knut and Alice Wallenberg Foundation;
she
thanks them for their kind support and hospitality.
\end{ack}

\section{Preliminaries}
\label{sect-prelim}

We assume throughout the paper that $1 <  p<\infty$ 
and that $X=(X,d,\mu)$ is a metric space equipped
with a metric $d$ and a positive complete  Borel  measure $\mu$ 
such that $0<\mu(B)<\infty$ for all balls $B \subset X$.
For proofs of the facts stated in this section 
we refer the reader to Bj\"orn--Bj\"orn~\cite{BBbook} and
Heinonen--Koskela--Shanmugalingam--Tyson~\cite{HKST}.

A \emph{curve} is a continuous mapping from an interval.
We will only consider curves which are nonconstant, compact and
rectifiable,
i.e.\ of finite length.
A curve can thus be parameterized by its arc length $ds$. 
A property holds for \emph{\p-almost every curve}
if the curve family $\Ga$ for which it fails has zero \p-modulus,
i.e.\ there is a Borel function $0 \le\rho\in L^p(X)$ such that
$\int_\ga \rho\,ds=\infty$ for every $\ga\in\Ga$.

\begin{deff} \label{deff-ug}
A measurable function $g:X \to [0,\infty]$ is a \emph{\p-weak upper gradient}
of $u:X \to [-\infty,\infty]$ if for \p-almost every curve
$\gamma: [0,l_{\gamma}] \to X$,
\[ 
        |u(\gamma(0)) - u(\gamma(l_{\gamma}))| \le \int_{\gamma} g\,ds,
\]
where the left-hand side is considered to be $\infty$
if at least one of the terms therein is $\pm \infty$.
\end{deff}

The \p-weak upper gradients were introduced in
Koskela--MacManus~\cite{KoMc}, 
see also Heinonen--Koskela~\cite{HeKo98}. 
If $u$ has a \p-weak upper gradient in $\Lploc(X)$, then
it has a \emph{minimal \p-weak upper gradient} $g_u \in \Lploc(X)$
in the sense that for every \p-weak upper gradient $g \in \Lploc(X)$ 
of $u$ we have
$g_u \le g$ a.e., see Shan\-mu\-ga\-lin\-gam~\cite{Sh-harm}. 
The minimal \p-weak upper gradient is well defined
up to a set of measure zero.
Note also that 
$g_u=g_v$ a.e.\ in $\{x \in X : u(x)=v(x)\}$,
in particular $g_{\min\{u,c\}}=g_u \chi_{\{u < c\}}$ a.e., for $c \in \R$.

Following Shanmugalingam~\cite{Sh-rev}, 
we define a version of Sobolev spaces on $X$. 

\begin{deff} \label{deff-Np}
For a measurable function $u:X\to [-\infty,\infty]$, let 
\[
        \|u\|_{\Np(X)} = \biggl( \int_X |u|^p \, d\mu 
                + \inf_g  \int_X g^p \, d\mu \biggr)^{1/p},
\]
where the infimum is taken over all \p-weak upper gradients $g$ of $u$.
The \emph{Newtonian space} on $X$ is 
\[
        \Np (X) = \{u: \|u\|_{\Np(X)} <\infty \}.
\]
\end{deff}

In this paper we assume that functions in $\Np(X)$
are defined everywhere (with values in $[-\infty,\infty]$),
not just up to an equivalence class in the corresponding function space.
The space $\Np(X)/{\sim}$, where  $u \sim v$ if and only if $\|u-v\|_{\Np(X)}=0$,
is a Banach space and a lattice, 
see~\cite{Sh-rev}. 
For a measurable set $E\subset X$, the Newtonian space $\Np(E)$ is defined by
considering $(E,d|_E,\mu|_E)$ as a metric space in its own right.

\begin{deff}
The (Sobolev) \emph{capacity} of a set $E\subset X$  is the number 
\[
\Cp(E)=\CpX(E) =\inf_u    \|u\|_{\Np(X)}^p,
\]
where the infimum is taken over all $u\in \Np (X) $ such that $u=1$ on $E$.
\end{deff}

A property is said to hold \emph{quasieverywhere}
(q.e.)\ if the set of all points 
at which the property
fails has $\Cp$-capacity zero. 
The capacity is the correct gauge 
for distinguishing between two Newtonian functions. 
If $u \in \Np(X)$, then $u \sim v$ if and only if $u=v$ q.e.
Moreover, if $u,v \in \Nploc(X)$ and $u= v$ a.e., then $u=v$ q.e.

We let $B=B(x,r)=\{y \in X : d(x,y) < r\}$ denote the ball
with centre $x$ and radius $r>0$, and let $\la B=B(x,\la r)$.
We assume throughout the paper that balls are open.
In metric spaces it can happen that
balls with different centres and/or radii denote the same set. 
We will however adopt the convention that a ball $B$ comes with
a predetermined centre $x_B$ and radius $r_B$. 
In this generality, it can happen that  $B(x_0,r_0) \subset B(x_1,r_1)$
even when $r_0 > r_1$, and in
disconnected spaces also when $r_0 > 2 r_1$.
If $X$ is connected, then $B(x_0,r_0) \subset B(x_1,r_1)$ with
$r_0 >2r_1$ is possible only when  $B(x_0,r_0)= B(x_1,r_1)=X$.

We shall use the following local assumptions 
introduced in Bj\"orn--Bj\"orn~\cite{BBsemilocal}.

\begin{deff} \label{def-local-doubl-mu}
The measure \emph{$\mu$ is doubling within $B(x_0,r_0)$}
if there is $C>0$ (depending on $x_0$ and $r_0$)
such that 
\[ 
\mu(2B)\le C \mu(B)
\] 
for all balls $B \subset B(x_0,r_0)$.

We say that $\mu$ is \emph{locally doubling} (on $X$) if 
for every $x_0 \in X$ there is some $r_0>0$ 
(depending on $x_0$) such that $\mu$ is doubling within $B(x_0,r_0)$.

If $\mu$ is doubling within every ball $B(x_0,r_0)$, then 
it is \emph{semilocally doubling},
and if moreover $C$ is independent of $x_0$ and $r_0$,
then $\mu$ is \emph{globally doubling}.
\end{deff}

\begin{deff} \label{def-PI}
The 
\emph{\p-Poincar\'e inequality holds within $B(x_0,r_0)$} 
if there are constants $C>0$ and $\lambda \ge 1$ (depending on $x_0$ and $r_0$)
such that for all balls $B\subset B(x_0,r_0)$, 
all integrable functions $u$ on $\la B$, and all 
\p-weak upper gradients $g$ of $u$, 
\begin{equation}    \label{eq-def-local-PI}
        \vint_{B} |u-u_B|\,d\mu
        \le C r_B \biggl( \vint_{\lambda B} g^{p} \,d\mu \biggr)^{1/p},
\end{equation}
where $u_B:= \vint_B u\,d\mu = \mu(B)^{-1}\int_B u\,d\mu$.

We also say that $X$ (or $\mu$) supports 
a \emph{local \p-Poincar\'e inequality} (on $X$) if
for every $x_0 \in X$ there is some $r_0>0$ (depending on $x_0$) 
such that the \p-Poincar\'e inequality holds within $B(x_0,r_0)$.

If the \p-Poincar\'e inequality holds within every ball $B(x_0,r_0)$,
then $X$ supports a \emph{semilocal \p-Poincar\'e inequality}.
If moreover $C$ and $\la$ are independent of $x_0$ and $r_0$,
then $X$ supports a \emph{global \p-Poincar\'e inequality}.
\end{deff}

\begin{remark} \label{rmk-semilocal}
If $X$ is 
proper 
(i.e.\ every closed and bounded subset of $X$ is compact)
and connected,
and $\mu$ is locally doubling
and supports a local \p-Poincar\'e inequality,
then 
$\mu$ is semilocally doubling
and supports a semilocal \p-Poincar\'e inequality,
by Bj\"orn--Bj\"orn~\cite[Proposition~1.2 and Theorem~1.3]{BBsemilocal}.
This in particular applies to $\R^n$ equipped with the Euclidean distance
and any measure satisfying the local assumptions.
\end{remark}

\begin{remark} \label{rmk-gu}
If $X$ is locally compact and supports a global \p-Poincar\'e inequality
and $\mu$ is globally doubling, then 
$g_u=\Lip u$ a.e.\ for 
Lipschitz functions $u$ on $X$, 
by Theorem~6.1 in Cheeger~\cite{Cheeg} together with Lemma~8.2.3 in 
Hei\-no\-nen--Kos\-ke\-la--Shan\-mu\-ga\-lin\-gam--Ty\-son~\cite{HKST}
(or Theorem~4.1 in Bj\"orn--Bj\"orn~\cite{BBnoncomp}).
(Here $\Lip u$ is the upper pointwise dilation of $u$, also
called the local upper Lipschitz constant.)
Moreover, Lipschitz functions
are dense in $\Np(X)$, see Shan\-mu\-ga\-lin\-gam~\cite{Sh-rev}.

Hence if $X= \R^n$, equipped with a \p-admissible measure
as in Heinonen--Kilpel\"ainen--Martio~\cite{HeKiMa},
then $g_u= |\nabla u|$ for $u \in \Np(X)$, where
$\nabla u$ is the weak Sobolev gradient from~\cite{HeKiMa}.
The corresponding identities for the gradients hold also on
Riemannian manifolds and Carnot--Carath\'eodory spaces
equipped with their natural measures; 
see Haj\l asz--Koskela~\cite[Section~11]{HaKo}. 
\end{remark}

If $X$ is connected  
(which follows from any semilocal Poincar\'e inequality,
see e.g.\ the proof of Proposition~4.2 in \cite{BBbook}), 
then the global doubling property 
implies that there are positive constants 
$\sigma\le s$  and $C$ such that 
\begin{equation}\label{eq:mass-bound-exp}
  \frac{1}{C} \Bigl(\frac{r}{R}\Bigr)^s \le 
  \frac{\mu(B(x,r))}{\mu(B(x,R))}
  \le C \Bigl(\frac{r}{R}\Bigr)^\sigma
\end{equation}
whenever $x\in X$ and $0<r\le R < 2 \diam X$.
Example~2.3 in 
Adamowicz--Bj\"orn--Bj\"orn--Shan\-mu\-ga\-lin\-gam~\cite{ABBS}
shows that $\sigma$ may need to be 
close to $0$.

Fixing $r>0$ and letting $R\to\infty$ in~\eqref{eq:mass-bound-exp} 
shows that $X$ has volume growth 
\eqref{eq-vol-growth} of exponent $\al=\sigma$ at infinity,
but it is often possible to have a larger choice of $\alpha$. 
At the same time, necessarily $\alpha\le s$.
It is easy to see that \eqref{eq-vol-growth}
is independent of $x_0$. 
The set of all possible $\alpha$ in \eqref{eq-vol-growth} is an interval
which may be open or closed at the right endpoint.
When the right endpoint does not belong to the interval, there 
is no optimal choice of $\alpha$.

A measurable function $u$ 
is of \emph{finite energy} on an open set $\Om$ if it has a \p-weak  
upper gradient in $L^p(\Om)$,
in which case its \emph{energy} on $\Om$ is given by $\int_\Om g_u^p \, d\mu$.
It follows from 
\cite[Proposition~4.14]{BBbook} or \cite[Lemma~8.1.5 and Theorem~9.1.2]{HKST}
that if $\mu$ is locally doubling and supports a local \p-Poincar\'e 
inequality, then functions with finite energy on $\Om$ belong to 
$\Np\loc(\Om)$.
Similar arguments can be used to show that under semilocal assumptions,
functions with finite energy on $\Om$ belong to 
the space $\Ndistloc(\Om)$.
See Section~\ref{sect-qmin} below for the definitions of 
$\Np\loc(\Om)$ and $\Ndistloc(\Om)$.

\section{Quasiminimizers and their test functions}
\label{sect-qmin}

\emph{We assume in this section that $\mu$ is locally doubling
  and supports a local \p-Poincar\'e inequality.
We will take extra care to avoid the requirement that the
metric space is complete or even locally compact.}

\medskip

Let $\Om \subset X$ be open.
We say  that $f \in \Nploc(\Om)$ if
for every $x \in \Om$ there exists $r_x>0$ such that 
$B(x,r_x)\subset\Om$ and $f \in \Np(B(x,r_x))$.
Traditionally, e.g.\ in $\R^n$ and other complete spaces, 
a quasiminimizer $u$ on 
$\Om$ is
required to belong to the local space $\Nploc(\Om)$
and the quasiminimizing property is tested by sufficiently smooth (e.g.\ 
Lipschitz or Sobolev) \emph{test functions} $\phi$ 
with compact support in $\Om$
(or with zero boundary values) as follows:
\[ 
      \int_{\phi\ne0} g^p_u \, d\mu 
           \le Q \int_{\phi\ne0} g_{u+\phi}^p \, d\mu.
\] 
When $X$ is noncomplete there are several natural
choices corresponding to $\Nploc(\Om)$ as well
as several choices of natural test function spaces,
and contrary to the complete case these do not all lead to equivalent definitions.
Thus we might obtain different classes of quasiminimizers by 
considering different test classes of 
$\phi$
and by requiring that $u$ belongs to 
various choices of local Newtonian spaces. 
See 
Bj\"orn--Bj\"orn~\cite[Section~6]{BBnoncomp} 
and 
Bj\"orn--Marola~\cite{BMarola} 
for 
related discussions.

Our choice of test functions
is based on the desire to have as large as possible collection of
quasiminimizers while retaining potential-theoretic properties such as 
maximum principles and weak Harnack inequalities for such quasiminimizers. 
For instance, insisting on compact support could lead to
a very small class of test functions if $X$ is not locally compact, 
and then properties such as 
the Harnack inequality 
and maximum principles might fail,
see Examples~\ref{ex-(-1,1)} and~\ref{ex-Rn-Qn} below.

We follow the notation in \cite{BBnoncomp} and define
\begin{align*}
\Np_0(\Om) &= \{\phi|_\Om: \phi\in\Np(X) \text{ and } 
\phi=0 \text{ in }X\setm \Om\}, \\ 
\Gdist(\Om) &= 
\{G\subset\Om: G \text{ is bounded and open, and } \dist(G,X\setm\Om)>0 \}, \\
\Ndistloc(\Om)& = \{u:\Om\to [-\infty,\infty]: u\in \Np(G) \text{ for all }
 G \in \Gdist(\Om)\}, \\
\Ndisto(\Om) &= \overline{ \{\phi:\Om\to [-\infty,\infty]: \phi\in \Np_0(G) 
\text{ for some } G \in \Gdist(\Om)\}},  
\end{align*}
where the closure is with respect to the Sobolev 
norm $\Vert{\,\cdot\,}\Vert_{N^{1,p}(X)}$.
We emphasize that the class $\Gdist(\Om)$ depends not only on $\Om$
but also on the ambient metric space~$X$.
Here we adopt the convention that 
$\dist(G,\emptyset)>0$ for all $G$.
In particular, this means that the requirement $\dist(G,X\setm\Om)>0$
is trivially satisfied for all $G$ when $\Om=X$. 

\begin{deff} \label{def-qmin}
A function $u \in \Ndistloc(\Om)$ is a \emph{quasiminimizer} in $\Om$ if
there exists $Q\ge1$ such that
\begin{equation} \label{eq-deff-qmin}
     \int_{\phi \ne 0} g^p_u \,d\mu \le Q  \int_{\phi \ne 0} g^p_{u+\phi} \,d\mu
\end{equation}
for all $\phi \in \Ndisto(\Om)$. 
If $Q=1$ in~\eqref{eq-deff-qmin}, then $u$ is 
a \emph{minimizer}. 
\end{deff}

Any quasiminimizer can be modified on a set of capacity zero
so that it becomes 
continuous
(by which we mean real-valued continuous in this paper).
This follows from the results in 
Kinnunen--Shan\-mu\-ga\-lin\-gam~\cite[p.~417]{KiSh01}.
The assumptions in~\cite{KiSh01} are global and guarantee local
H\"older continuity of quasiminimizers.
See Bj\"orn--Bj\"orn~\cite[Theorem~6.2]{BBnoncomp} 
for how the arguments apply in our situation.
Such a continuous representative  is called 
\emph{quasiharmonic} or, for $Q=1$, \emph{\p-harmonic}.
The strong maximum principle,
saying that if a quasiharmonic function attains its maximum in a domain then 
it is constant therein,
also holds,
see 
\cite[Corollary~6.4]{KiSh01} and 
\cite[Theorem~6.2]{BBnoncomp}.

The following example shows 
why it is essential  
to use the nonstandard space $\Ndistloc(\Om)$. 
Namely, requiring only $u\in\Nploc(\Om)$ 
in the definition of quasiminimizers would cause problems
with Harnack inequalities and the weak maximum principle.

\begin{example} \label{ex-Nploc}
Consider the locally compact space $X=\R^n \setm \{0\}$ and let $1<p< n$.
The function $u(x)=|x|^{(p-n)/(p-1)}$ is \p-harmonic in $X$ as 
an open  subset of $\R^n$.
Since $C_p^{\R^n}(\{0\})=0$, it
thus follows that \eqref{eq-deff-qmin} holds for all 
$\phi \in \Np(X) $.  
However, $u \notin \Ndistloc(X)$ as $B(0,1) \cap X \in \Gdist(X)$, 
and thus $u$ is not \p-harmonic on $X$
in the sense of Definition~\ref{def-qmin}.

Had we only required that $u \in \Nploc(X)$, this would instead
have been an example of a positive \p-harmonic function violating
the Harnack inequality $\sup_B u\le C\inf_B u$ 
and the weak maximum principle \eqref{eq-weak-max} below.
On the other hand, by \cite[Theorem~6.2]{BBnoncomp}
the strong maximum principle
would still hold. 
See \cite[Section~6]{BBnoncomp} for further discussion.
\end{example}

If $X$ is proper,
then $\Ndistloc(\Om)=\Nploc(\Om)$ and
\eqref{eq-deff-qmin}
can equivalently be based on 
test functions from $N^{1,p}(X)$ (or $\Lip(X)$) with compact support in $\Om$.
Our definition of quasiminimizers then coincides with the usual
definitions used in the literature, 
see Bj\"orn~\cite[Proposition~3.2]{ABkellogg}.

In the setting of Riemannian manifolds, 
the notion of global \p-harmonic functions 
usually requires the test functions
to be of compact support rather than just vanishing
outside some set $G\in\Gdist(X)$, see e.g.\ 
Cheeger--Gromoll~\cite{ChGro} or Ferrand~\cite[Theorem~4.1]{Fer}. 
Since studies on  Riemannian manifolds often do not focus on understanding
the boundary of the manifold itself, such a class of test functions
is appropriate there.
We are 
interested in the influence of the global structure, 
including the boundary, 
and therefore
use the test function class $\Ndisto(X)$. 
For proper metric spaces and complete connected Riemannian manifolds
these two test function classes coincide, as mentioned above.

The following two examples further illustrate what
can happen when one uses different classes of test functions.

\begin{example}  \label{ex-(-1,1)}
Let $X=\R^{n-1} \times (0,\infty)$
be equipped with the Euclidean distance and the Lebesgue measure.
Note that this is a locally compact space.
Let 
\[
\Om=(-1,1)^{n-1}\times (0,1) \quad \text{and} \quad  u(x)=x_n.
\]
Then $u$ is not \p-harmonic in $\Om$ (seen
as a subset of $X$) because the restriction $v$ to
$\Om$ of the unique \p-harmonic function in $(-1,1)^n\subset\R^n$ with boundary
data $f(x):=|x_n|$ will have smaller energy on $\Om$ than $u$,
since $f$ is not \p-harmonic in $(-1,1)^n$.
Indeed, the function $\phi=v-u$ belongs to $\Ndisto(\Om)$
and can thus be used in Definition~\ref{def-qmin}, even though
it does not have compact support in $\Om$.

In fact, one can see that \p-harmonic functions in subsets of $X$,
as defined in Definition~\ref{def-qmin},
satisfy a zero Neumann boundary condition on the ``missing boundary''
$\R^{n-1} \times \{0\}$,
while this is not in general true for \p-harmonic functions 
defined using 
test functions with compact support.
\end{example}

In spaces which are not locally compact the following example
shows that the situation can
get even worse if one uses test 
functions with compact support.

\begin{example} \label{ex-Rn-Qn}
Let $X=\R^n\setm\Q^n$, $n \ge 2$, equipped with the Lebesgue measure,
and let $p>n$.
Since $\Cone(\Q^n)=0$, we conclude that
$1$-almost no curve in $\R^n$ hits $\Q^n$,
and hence $X$ inherits the global $1$-Poincar\'e inequality from $\R^n$.
It thus follows from
Shanmugalingam~\cite[Theorem~5.1]{Sh-rev} that every $u \in \Np(X)$
has a continuous representative $v \sim u$.
If $u$ has compact support in $X$, then so does $v$, but then $v$
has to be identically $0$ by the density of $\Q^n$.

Hence, using only test functions with compact support
would mean that every continuous Newtonian function is \p-harmonic,
which would violate all types of weak Harnack inequalities as well
as both the weak and strong maximum principles.
\end{example}

The weak maximum principle will be an important
tool in proving Theorem~\ref{thm-intro}\,\ref{a2} in 
Section~\ref{sect-pf-main-a2}.
We will only need it under global assumptions (of doubling and a \p-Poincar\'e
inequality), but
we take the opportunity to deduce it under only local assumptions.
Due to the possible noncompleteness, it does not 
seem to be covered 
in the literature even 
under global assumptions,
nor does it follow from the strong maximum principle, despite
its name, cf.\ Example~\ref{ex-Nploc}.

\begin{thm}  \label{thm-weak-max}
\textup{(Weak maximum principle)}
Assume that $X$ is connected.
If $u$ is quasiharmonic in $\Om$ and $G \in \Gdist(\Om)$, 
with $\emptyset \ne G \ne X$,
then
\begin{equation} \label{eq-weak-max}
    \sup_G u = \sup_{\bdy G} u.
\end{equation}
\end{thm}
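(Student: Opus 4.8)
The plan is to prove the weak maximum principle \eqref{eq-weak-max} by a standard obstacle-problem/truncation argument adapted carefully to the noncomplete setting, where the class $\Gdist(\Om)$ and the test-function space $\Ndisto(\Om)$ are the right tools. Set $M = \sup_{\bdy G} u$. We may assume $M < \infty$, since otherwise there is nothing to prove; also, since $G \in \Gdist(\Om)$ is bounded and $u$ is continuous (hence locally bounded on the closure of $G$, which we will want to be compact or at least handled via $\Gdist$), one reduces to showing $u \le M$ on $G$. The key step is to use the truncation $\phi := \min\{M - u, 0\} = -(u-M)_+$, restricted appropriately, as a test function. The point is that $\{\phi \ne 0\} = \{u > M\}$ is an open subset of $G$ whose closure (within $X$) does not meet $\bdy G$, by continuity of $u$ and the definition of $M$; hence $\{u > M\}$ is a bounded open set with positive distance to $X \setm \Om$, i.e.\ it lies in $\Gdist(\Om)$, and $\phi \in \Np_0(\{u>M\}) \subset \Ndisto(\Om)$, so $\phi$ is an admissible test function in Definition~\ref{def-qmin}.

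First I would verify the admissibility claim in detail: since $u$ is quasiharmonic in $\Om$ it is in particular continuous and in $\Ndistloc(\Om)$, so $u \in \Np(G')$ for every $G' \in \Gdist(\Om)$; the set $U := \{x \in G : u(x) > M\}$ is open, bounded (as $U \subset G$), and $\overline{U} \cap \bdy G = \emptyset$ because $u \le M$ on $\bdy G$ and $u$ is continuous, so $\dist(U, X \setm \Om) \ge \dist(U, X\setm G) > 0$ after possibly also using that $\dist(G, X \setm \Om) > 0$; thus $U \in \Gdist(\Om)$ and $\phi = (M-u)\chi_U \in \Np_0(U)$. Then I would plug $\phi$ into \eqref{eq-deff-qmin}. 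On $U$ we have $u + \phi = M$, a constant, so $g_{u+\phi} = 0$ a.e.\ on $U$ (using $g_{\min\{u,M\}} = g_u \chi_{\{u<M\}}$ a.e.\ and the fact that $u+\phi = \min\{u,M\}$ globally on $\Om$). Hence the right-hand side of \eqref{eq-deff-qmin} is $Q\int_{\{\phi\ne0\}} g_{u+\phi}^p\,d\mu = Q\int_U g_{u+\phi}^p\,d\mu = 0$, which forces $\int_U g_u^p\,d\mu = 0$, i.e.\ $g_u = 0$ a.e.\ on $U$.

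It remains to conclude $U = \emptyset$ (equivalently, $\mu(U)=0$, which by continuity of $u$ gives $U=\emptyset$). Here I would invoke connectedness of $X$ together with the Poincar\'e inequality: on the bounded open set $U \in \Gdist(\Om)$ the local doubling property and local \p-Poincar\'e inequality are available, and $g_u = 0$ a.e.\ on $U$ forces $u$ to be constant on each connected component of $U$. But each component $V$ of $U$ has nonempty boundary $\bdy V \subset \bdy U \cup (X \setm G)$; since $u$ is continuous and $U$ is relatively closed in nothing that would let $u$ jump, on $\bdy V$ we must have $u \le M$ (either because a boundary point lies in $\overline{G}$ where $u$ is continuous and not exceeding $M$ near $\bdy G$, or because it lies outside $G$), while $u > M$ on $V$ — contradicting continuity at $\bdy V$. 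More cleanly: $u$ is constant $=c > M$ on $V$, yet there is a sequence in $V$ converging to a point of $\bdy V$, at which $u \le M$ by continuity, so $c \le M$, a contradiction. Hence $U = \emptyset$ and $\sup_G u \le M$; the reverse inequality $\sup_G u \ge \sup_{\bdy G} u$ again follows from continuity of $u$ and $\bdy G \subset \overline{G}$.

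The main obstacle I expect is the bookkeeping around noncompleteness and noncompactness: one must be careful that $U = \{u > M\}$ genuinely belongs to $\Gdist(\Om)$ (in particular that it is bounded — which is automatic from $U \subset G$ — and has positive distance to $X \setm \Om$), and that $\phi = (M-u)\chi_U$ really lies in $\Ndisto(\Om)$ rather than merely in some larger local space; this is exactly where the nonstandard definition of the test class pays off, and where naive compact-support test functions would fail, as Examples~\ref{ex-(-1,1)} and~\ref{ex-Rn-Qn} show. A secondary subtlety is justifying $g_{u+\phi} = g_{\min\{u,M\}} = g_u\chi_{\{u<M\}}$ a.e.\ and that $u+\phi \in \Ndistloc(\Om)$ so the minimal \p-weak upper gradient makes sense; these follow from the truncation property of minimal \p-weak upper gradients recalled in Section~\ref{sect-prelim} and the lattice property of $\Np$.
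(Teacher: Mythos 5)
Your strategy is essentially the paper's: test the quasiminimizing inequality with the truncation $\phi=-(u-M)_+$ supported on the exceedance set $U=\{x\in G: u(x)>M\}$, observe that $u+\phi=\min\{u,M\}$ is constant on $U$ so that the right-hand side of \eqref{eq-deff-qmin} vanishes over $\{\phi\ne0\}=U$, and conclude that $g_u=0$ a.e.\ on $U$. Two remarks on the admissibility step: the inclusion $U\in\Gdist(\Om)$ follows at once from $U\subset G$, since then $\dist(U,X\setm\Om)\ge\dist(G,X\setm\Om)>0$; your intermediate claim that $\itoverline{U}\cap\bdy G=\emptyset$ is false in general (a point of $\bdy G$ where $u=M$ can be a limit of points of $U$), but it is not needed. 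Also, you should record that $\bdy G\ne\emptyset$ (which follows from connectedness of $X$ and $\emptyset\ne G\ne X$), as otherwise $M=\sup_{\bdy G}u$ is a supremum over the empty set.

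The genuine gap is in the final step, where you deduce $U=\emptyset$. You work on a connected component $V$ of $U$ and assert $\bdy V\subset\bdy U\cup(X\setm G)$, so that $u\le M$ on $\bdy V$. This presupposes that $V$ is open: in a general metric space (not assumed locally connected, locally compact or complete here) components of open sets need not be open, and then $\bdy V$ can meet $V$ itself, where $u\equiv c>M$, so no contradiction arises at such boundary points. A repair close to your argument: the Poincar\'e step in fact shows that every $x\in U$ has a ball $B\subset U$ on which $u$ is constant, so each level set $W=\{x\in U:u(x)=c\}$ is open; for a nonempty such $W$ with $c>M$ one has $\bdy W\ne\emptyset$ (connectedness, $\emptyset\ne W\ne X$), and any $y\in\bdy W$ satisfies $u(y)=c>M$ by continuity, yet $y$ cannot lie in $U$ (else $y\in W$, contradicting openness of $W$), nor in $G\setm U$ or $\bdy G$ (where $u\le M$), nor outside $\itoverline{G}$. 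The paper's own proof sidesteps the issue entirely: it extends $\phi$ by zero to all of $X$, notes that $g_\phi=0$ a.e.\ on $X$ (not merely on $U$), concludes from the local Poincar\'e inequality and continuity that $\phi$ is locally constant on the connected space $X$, hence constant, and hence identically $0$ because $\phi=0$ on the nonempty set $\bdy G$. Either fix completes your argument.
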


Connectedness does not follow
from the local \p-Poincar\'e inequality (in contrast
to the semilocal \p-Poincar\'e inequality) 
and needs to be imposed explicitly.
That connectedness cannot be dropped,
even if we require $\bdy G \ne \emptyset$, follows
by letting 
\[
  X=\Om= \itoverline{B(x,2)} \cup \itoverline{B(y,2)} \subset \R^n,
  \quad \text{where } |x-y| >4,
\]
and $G = X \setm \itoverline{B(y,1)}$,
together with the \p-harmonic function
 $u=\chi_{\itoverline{B(x,2)}}$.

\begin{proof}
As $X$ is connected and $\emptyset \ne G \ne X$, the boundary $\bdy G \ne \emptyset$.
By continuity, $ \sup_G u \ge  \sup_{\bdy G} u \ne -\infty$,
and there is nothing to prove if $\sup_{\bdy G} u= \infty$.
By adding a constant, we may thus assume that $\sup_{\bdy G} u=0$.
Let 
\[
A=\{x \in G: u(x) > 0\}
\quad \text{and} \quad
   \phi=\begin{cases}
       \up, & \text{in } \itoverline{G}, \\
       0, & \text{in } X \setm G. 
     \end{cases}
\]
Then $\phi$ is continuous in $X$ since $G \in \Gdist(\Om)$, and
$\phi \in \Np_0(A) \subset \Ndisto(\Om)$.
Hence
\[
     \int_{A} g^p_{\phi} \,d\mu 
     = \int_{A} g^p_{u} \,d\mu 
     \le Q  \int_{A} g^p_{u-\phi} \,d\mu
     =0,
\]
and so $g_\phi=0$ a.e.\ in $A$.
Since $\phi=0$ outside $A$, we also see that $g_\phi=0$ a.e.\ in $X \setm A$,
and thus a.e.\ in $X$.
It follows from the local \p-Poincar\'e inequality
and the continuity of $\phi$
that $\phi$ is locally constant.
As $X$ is connected,
$\phi$
is constant in $X$.
In particular, $\up=\phi$ is constant in 
$\itoverline{G}$.
Since $\sup_{\bdy G} u=0$, \eqref{eq-weak-max} follows.
\end{proof}

It is also important to know that the suprema in 
\eqref{eq-weak-max}
cannot equal $\infty$.
This follows by continuity of $u$ if $\itoverline{G}$ is compact, and 
by the weak maximum principle (and continuity) if $\bdy G$ is compact.
In general we have the following result.

\begin{prop} \label{prop-bdd-qharm}
Assume that $\mu$ is semilocally doubling
and supports a semilocal \p-Poincar\'e inequality.
If $u$ is quasiharmonic in $\Om$ and $G \in \Gdist(\Om)$,
then $u$ is bounded on $G$.
\end{prop}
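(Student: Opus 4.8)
The plan is to deduce boundedness on $G$ from the interior local boundedness of quasiminimizers, made uniform over all of $G$ by working inside one sufficiently large reference ball where the semilocal hypotheses apply.

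Since $G\in\Gdist(\Om)$ it is bounded, say $G\subset B(x_0,R)$, and $d_0:=\dist(G,X\setm\Om)>0$. First I would fatten $G$ a little: put $U=\{x\in X:\dist(x,G)<d_0/2\}$, which is bounded and open, lies in $B(x_0,R+d_0/2)$, and has $\dist(U,X\setm\Om)\ge d_0/2>0$; hence $U\in\Gdist(\Om)$, so $u\in\Np(U)$ and in particular $\int_U u_+^p\,d\mu<\infty$. Fix $B^*:=B(x_0,10(R+d_0))\supset U$; by the semilocal assumptions, $\mu$ is doubling within $B^*$ with some constant $C_D$ and the \p-Poincar\'e inequality holds within $B^*$ with some constants $C_P,\la$ (Definitions~\ref{def-local-doubl-mu} and~\ref{def-PI}).

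For each $x\in G$ set $B_x=B(x,\rho)$, where $\rho$ is a fixed small multiple of $d_0$, small enough that the dilated ball $\Lambda B_x$ entering the De Giorgi iteration satisfies $\Lambda B_x\subset\{y:\dist(y,G)<d_0/4\}\subset U\subset B^*$ (here $\Lambda$ depends only on $\la$). Since $\Lambda B_x\in\Gdist(\Om)$, we have $u\in\Np(\Lambda B_x)$ (as $u\in\Ndistloc(\Om)$) and every $\phi\in\Np_0(\Lambda B_x)$ is admissible in Definition~\ref{def-qmin} (since $\Np_0(\Lambda B_x)\subset\Ndisto(\Om)$); thus $u$ is a quasiminimizer on $\Lambda B_x$ in the usual interior sense, and the local boundedness estimate for quasiminimizers — whose proof uses only doubling and the \p-Poincar\'e inequality on $\Lambda B_x\subset B^*$, cf.\ Kinnunen--Shanmugalingam~\cite{KiSh01}, Bj\"orn--Bj\"orn~\cite{BBbook} and~\cite[Section~6]{BBnoncomp} for the adaptation to noncomplete spaces — gives
\[
    u\le C\biggl(\,\vint_{\Lambda B_x}u_+^p\,d\mu\biggr)^{1/p}\quad\text{a.e.\ in }B_x,
\]
with $C$ depending only on $p$, $Q$, $C_D$, $C_P$, $\la$, and therefore \emph{not} on $x$. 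To control the right-hand side uniformly I would bound $\mu(\Lambda B_x)$ from below: since $B(x_0,R)\subset B(x,2R)\subset B^*$ for $x\in G$, iterating the doubling inequality within $B^*$ a fixed number of times (depending only on $C_D$, $\la$ and $R/d_0$) yields $\mu(B(x_0,R))\le\mu(B(x,2R))\le C\mu(\Lambda B_x)$, so $\mu(\Lambda B_x)\ge c_0>0$ with $c_0$ independent of $x\in G$; equivalently this is the reverse mass bound in~\eqref{eq:mass-bound-exp}, which is available here since a semilocal \p-Poincar\'e inequality forces $X$ to be connected (see the discussion around~\eqref{eq:mass-bound-exp}). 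Combining, $u\le C(c_0^{-1}\int_U u_+^p\,d\mu)^{1/p}=:L<\infty$ a.e.\ in $B_x$ for every $x\in G$. Since $u$ is continuous and $x\in B_x$, this forces $u(x)\le L$ (if $u(x)>L$, then $u>L$ on a neighbourhood of $x$ inside $B_x$, a set of positive measure, contradicting the a.e.\ bound); hence $\sup_G u\le L$. Applying the same argument to $-u$, which is again quasiharmonic in $\Om$, gives a lower bound, so $u$ is bounded on $G$.

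The main obstacle is precisely this uniformity over all of $G$: both the constant in the local boundedness estimate and the lower bound for $\mu(\Lambda B_x)$ must be independent of $x\in G$, even though $\itoverline{G}$ need not be compact and $G$ may approach $X\setm\Om$ arbitrarily closely. This is exactly where the \emph{semilocal} (rather than merely local) doubling and \p-Poincar\'e hypotheses are used: fixing the single ball $B^*\supset U$, all the balls $\Lambda B_x$, $x\in G$, sit inside $B^*$ with one common set of structural constants, so no finite subcover of $G$ is ever needed.
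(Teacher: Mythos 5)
Your proof is correct and takes essentially the same route as the paper, which simply asserts that the proof of the global-assumptions version in Bj\"orn--Marola~\cite[Corollary~8.3]{BMarola} --- a covering of $G$ by uniformly small balls on which the local sup-estimate for quasiminimizers applies, fed by the $\Np$-integrability of $u$ on a slightly fattened set $U\in\Gdist(\Om)$ --- carries over verbatim to the semilocal setting. You have supplied exactly the point that makes ``verbatim'' work: all the balls $\La B_x$, $x\in G$, sit inside one fixed ball $B^*$, within which the semilocal hypotheses give a single set of doubling and Poincar\'e constants and a uniform lower mass bound, so no compactness of $\itoverline{G}$ is needed.
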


The proof 
of this fact under global assumptions in 
Bj\"orn--Marola~\cite[Corollary~8.3]{BMarola}
applies verbatim under semilocal assumptions.
We do not know if this result holds under only 
local assumptions, although
Remark~\ref{rmk-semilocal} implies that it
does if $X$ is in addition proper and connected.

In the rest of this paper we are primarily interested in
global
quasiminimizers
(but for some results in Section~\ref{sect-c}), in which case
certain issues disappear compared with the situation for
arbitrary open subsets of $X$.
Note that 
\[
\Ndisto(X)=\Np_0(X)=\Np(X)
\] 
and so for functions on all of $X$, Definition~\ref{def-qmin}
coincides with several of the other definitions considered in
Bj\"orn--Bj\"orn~\cite{BBnoncomp},
but may still differ from the classical notions of \p-harmonic
functions and quasiminimizers, cf.\ Example~\ref{ex-Nploc}.

Under global assumptions, also in noncomplete spaces, the
following positive Liouville theorem is implied by
the Harnack inequality obtained 
in Kinnunen--Shan\-mu\-ga\-lin\-gam~\cite[Corollary~7.3]{KiSh01}.

\begin{thm} \label{thm-Liouville}
\textup{(Positive Liouville theorem)}
Assume that $\mu$ is globally doubling and supports a global
\p-Poincar\'e inequality.
Then every positive quasiharmonic function on $X$ is constant.
\end{thm}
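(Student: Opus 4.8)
The plan is to deduce the statement from the scale-invariant Harnack inequality for positive quasiharmonic functions on metric spaces equipped with a globally doubling measure supporting a global \p-Poincar\'e inequality, as established in Kinnunen--Shanmugalingam~\cite[Corollary~7.3]{KiSh01}. Since we are working on all of $X$ and have noted that $\Ndisto(X)=\Np_0(X)=\Np(X)$, a positive quasiharmonic function $u$ on $X$ is a positive quasiminimizer in the classical sense on every ball, and in particular the interior Harnack inequality applies on arbitrarily large balls centered at a fixed point $x_0$. Thus there is a constant $C\ge 1$, depending only on $p$, $Q$ and the structural constants of $\mu$ (the doubling constant and the constants in the Poincar\'e inequality), but \emph{not} on the radius, such that
\begin{equation*}
  \sup_{B(x_0,r)} u \le C \inf_{B(x_0,r)} u
  \qquad\text{for every } r>0 .
\end{equation*}

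Next I would let $r\to\infty$. Fix two points $y,z\in X$. Since $X$ is connected (which follows from the global \p-Poincar\'e inequality), for every sufficiently large $r$ both $y$ and $z$ lie in $B(x_0,r)$, and the Harnack inequality gives $u(y)\le C\,u(z)$. As the constant $C$ is independent of $r$, this holds for the fixed pair $(y,z)$, and by symmetry also $u(z)\le C\,u(y)$; hence $u$ is bounded above and below by positive multiples of any fixed value, so $u$ is a bounded positive quasiharmonic function on $X$. The remaining step is to upgrade ``bounded'' to ``constant''. For this I would apply the Harnack inequality once more on $B(x_0,r)$ together with the oscillation estimate coming from the weak Harnack inequality: writing $M=\sup_X u$ and $m=\inf_X u$ (both finite and positive), the function $M-u$ is a nonnegative quasiharmonic function (since $Q$-quasiminimizers are stable under $u\mapsto c-u$, with the same $Q$), and applying the Harnack inequality to $M-u$ on $B(x_0,r)$ yields $M-m=\sup_{B(x_0,r)}(M-u)\le C\inf_{B(x_0,r)}(M-u)$, which forces $\inf_{B(x_0,r)}(M-u)\ge (M-m)/C$ for every $r$, i.e. $\sup_X u \le M-(M-m)/C$; combined with $\sup_X u = M$ this gives $M=m$, so $u$ is constant.

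The main technical point to get right is that the Harnack inequality really is available \emph{on every ball of $X$ with a constant independent of the radius}, even though $X$ is not assumed complete or locally compact; this is exactly the content flagged in the paragraph preceding the statement, which points to \cite[Corollary~7.3]{KiSh01} and to \cite[Theorem~6.2]{BBnoncomp} for the verification that the arguments of~\cite{KiSh01} go through for quasiharmonic functions in our sense. Once that input is in hand, the argument above is the only real content; the passage to the limit $r\to\infty$ is harmless because $C$ is genuinely scale-invariant, and no boundary behaviour of $X$ is involved since we only ever test on balls sitting well inside $X$.
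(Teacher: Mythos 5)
Your proposal is correct and takes essentially the same route as the paper, which gives no separate proof but simply records that the theorem is implied by the scale-invariant Harnack inequality of Kinnunen--Shanmugalingam~\cite[Corollary~7.3]{KiSh01}. (One cosmetic point: $\sup_{B(x_0,r)}(M-u)$ equals $M-m$ only in the limit $r\to\infty$, so your final step needs an $\epsilon$; the cleaner standard form is to apply the Harnack inequality to $u-\inf_X u$ on $B(x_0,r)$ and let $r\to\infty$, which yields constancy in one stroke.)
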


In bounded spaces the situation is particularly simple,
even under our standing local assumptions.

\begin{prop}  \label{prop-bdd-X}
If $X$ is bounded, then all quasiharmonic functions  on $X$ are 
locally constant, and thus constant in each component.
\end{prop}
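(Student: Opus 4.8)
The plan is to use that, since $X$ is bounded, the whole space $X$ itself belongs to $\Gdist(X)$, so that a global quasiharmonic function automatically lies in $\Np(X)$ and constants are legitimate test functions; testing the quasiminimizing inequality against $\phi=c-u$ then kills the energy, and the local \p-Poincar\'e inequality propagates this to local constancy. Concretely, $X$ being bounded is contained in a ball of finite measure, so $\mu(X)<\infty$ and hence every constant function lies in $\Np(X)$; moreover $\dist(X,X\setm X)=\dist(X,\emptyset)>0$ by the standing convention, so $X\in\Gdist(X)$. Thus if $u$ is quasiharmonic on $X$, then $u\in\Ndistloc(X)$ forces $u\in\Np(X)$, and since $\Ndisto(X)=\Np_0(X)=\Np(X)$ both $u$ and all constant functions are available.

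Next I would test Definition~\ref{def-qmin} (with $\Om=X$) using $\phi=c-u$ for an arbitrary $c\in\R$. Then $\phi\in\Np(X)=\Ndisto(X)$ and $u+\phi\equiv c$, so $g_{u+\phi}=0$ a.e., and therefore
\[
\int_{\{u\ne c\}} g_u^p\,d\mu = \int_{\phi\ne0} g_u^p\,d\mu \le Q\int_{\phi\ne0} g_{u+\phi}^p\,d\mu = 0 ,
\]
i.e.\ $g_u=0$ a.e.\ on $\{u\ne c\}$. Applying this with two distinct values $c_1\ne c_2$ and noting that $\{u\ne c_1\}\cup\{u\ne c_2\}=X$, I conclude that $g_u=0$ a.e.\ on $X$.

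Finally I would invoke the local \p-Poincar\'e inequality: given $x_0\in X$ there are $r_0>0$ and $\la\ge1$ such that it holds within $B(x_0,r_0)$. Taking $B=B(x_0,r)$ with $0<r\le r_0$, and using that $u\in L^p(\la B)\subset L^1(\la B)$ since $\mu(\la B)<\infty$, the inequality becomes $\vint_B|u-u_B|\,d\mu\le Cr\bigl(\vint_{\la B}g_u^p\,d\mu\bigr)^{1/p}=0$, so $u=u_B$ a.e.\ on $B$. As $u$ is continuous (being quasiharmonic), this yields $u\equiv u_B$ on $B(x_0,r)$; since $x_0$ was arbitrary, $u$ is locally constant, and hence constant on each connected component of $X$.

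I do not expect a genuine obstacle. The only point needing care is the bookkeeping that $u$ and the test functions $c-u$ really qualify as elements of $\Ndistloc(X)$ and $\Ndisto(X)$ respectively; this is precisely where boundedness of $X$ is used (via $X\in\Gdist(X)$ and $\mu(X)<\infty$), and it is also why the conclusion can fail on unbounded spaces.
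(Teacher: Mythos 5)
Your proof is correct and follows essentially the same route as the paper: identify $\Ndistloc(X)=\Np(X)=\Ndisto(X)$ using boundedness, test the quasiminimizing inequality against a constant minus $u$ to force $g_u=0$ a.e., and conclude via the local \p-Poincar\'e inequality and continuity. The only cosmetic difference is that the paper tests with $\phi=-u$ alone (relying on the fact that $g_u=0$ a.e.\ on $\{u=0\}$), whereas you use two constants $c_1\ne c_2$ to cover $X$; both are fine.
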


\begin{proof}
Let $u$ be quasiharmonic on $X$.
Since $X$ is bounded and $\dist(X,\emptyset)>0$,
we see that 
\[
\Ndistloc(X) = \Np(X) = \Ndisto(X).
\]
Testing~\eqref{eq-deff-qmin} with 
$-u \in \Ndisto(X)$ then yields 
\[
     \int_{u \ne 0} g^p_u \,d\mu \le Q  \int_{u \ne 0} g^p_{u-u} \,d\mu=0.
\]
This, together with the local \p-Poincar\'e inequality 
and the continuity of $u$,
 shows that 
$u$ is locally constant.
\end{proof}

\begin{proof}[Proof of Theorem~\ref{thm-intro}\,\ref{a4}]
Since $X$ supports a global Poincar\'e inequality, it is connected
(see e.g.\ \cite[Proposition~4.2]{BBbook}). Therefore, the theorem
follows directly from Proposition~\ref{prop-bdd-X}. 
\end{proof}

\begin{example} 
That all quasiharmonic functions 
on $X$ are constant can happen even for
unbounded spaces, as seen by letting $X=[0,\infty)$ 
or $X=(0,\infty)$
(equipped with any
locally doubling measure supporting a local \p-Poincar\'e inequality).
To see this, let $0<a< \infty$, $G=\{x \in X : x < a\}$
 and $\phi(x)=u(a)-u(x) \in \Ndisto(G)$.
Then, 
\[
    \int_0^a g_u^p \, d\mu \le Q  \int_0^a g_{u+\phi}^p \, d\mu=0,
\]
and $u$ must be constant in $(0,a)$  for each $a>0$, and thus in $X$.
\end{example}

\section{The proof of Theorem~\ref{thm-intro}\,\ref{a1}}
\label{sect-a}

\emph{In view of Proposition~\ref{prop-bdd-X}, 
we assume in this section that $X$ is unbounded 
and that $\mu$ is globally doubling
      and supports a global \p-Poincar\'e inequality,
with dilation constant $\la$.
We also fix $x_0\in X$  and  set $B_r:=B(x_0,r)$ for 
$r>0$.}

\medskip

The goal of this section is to prove 
Theorem~\ref{thm-intro}\,\ref{a1}.
To do so, we need an energy growth estimate for
quasiharmonic functions in terms of the oscillation of the function.
This estimate will also be crucial when establishing
Theorem~\ref{thm-intro}\,\ref{a2} in the next section.

\begin{lem} \label{lem-key}
Let $u$ be quasiharmonic in a ball $2\la B$.
Then, 
\begin{equation}   \label{eq-key-est}
     \osc_{B} u:=  \sup_{B} u - \inf_{B} u \le \frac{Cr_B}{\mu(B)^{1/p}} 
                \biggl( \int_{2\la B} g_u^p\, d\mu \biggr)^{1/p}.
\end{equation}
In particular,  if the volume growth condition~\eqref{eq-vol-growth}
holds, then there is an
increasing sequence of radii $r_j\to\infty$ such that 
\begin{equation}\label{eq-growth-energy-al} 
 \Bigl(\osc_{B_{r_j}}u\Bigr)^p\le Cr_j^{p-\alpha} 
         \int_{2\lambda B_{r_j}}g_u^p\, d\mu.
\end{equation}
\end{lem}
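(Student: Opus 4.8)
The plan is to first establish the oscillation--energy estimate~\eqref{eq-key-est}, by feeding the local boundedness estimate for quasisubminimizers into the \p-Poincar\'e inequality, and then to read off~\eqref{eq-growth-energy-al} by applying~\eqref{eq-key-est} along a sequence of balls on which the volume lower bound in~\eqref{eq-vol-growth} is essentially attained.

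For~\eqref{eq-key-est}, I would set $c=u_{2B}$ and decompose $u-c=w_+-w_-$, where $w_\pm=\max\{\pm(u-c),0\}\ge0$. Since $u$ is a $Q$-quasiminimizer in $2\la B$, so is $-u$ (the test class $\Ndisto(2\la B)$ is invariant under negation and $g_{-u}=g_u$), and hence each of $w_+,w_-$ is a nonnegative $Q$-quasisubminimizer in $2\la B$, as truncation from below and subtraction of a constant preserve quasisubminimality. The local boundedness estimate for quasisubminimizers---part of the De Giorgi--Moser machinery underlying the weak Harnack inequality, see Kinnunen--Shan\-mu\-ga\-lin\-gam~\cite{KiSh01} and Bj\"orn--Marola~\cite{BMarola} (cf.\ also \cite{BBbook})---then gives
\[
    \sup_{B} w_\pm \le C\Bigl(\vint_{2B} w_\pm^p\,d\mu\Bigr)^{1/p}
    \le C\Bigl(\vint_{2B} |u-u_{2B}|^p\,d\mu\Bigr)^{1/p}.
\]
As $u=c+w_+-w_-$ is continuous, $\osc_B u\le \sup_B w_++\sup_B w_-$, so the two inequalities combine to $\osc_B u\le C(\vint_{2B}|u-u_{2B}|^p\,d\mu)^{1/p}$. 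Next I would invoke the $(p,p)$-Poincar\'e inequality, which follows from the global doubling property and the global \p-Poincar\'e inequality (self-improvement of Poincar\'e inequalities, with the same dilation $\la$; see \cite{BBbook}), to bound the last average by $Cr_B(\vint_{2\la B}g_u^p\,d\mu)^{1/p}$. Finally, rewriting $\vint_{2\la B}=\mu(2\la B)^{-1}\int_{2\la B}$ and using $\mu(2\la B)\le C\mu(B)$ (global doubling) produces the form stated in~\eqref{eq-key-est}. (Alternatively, one can bypass the $(p,p)$-Poincar\'e inequality by using the $L^1$-form $\sup_B w_\pm\le C\vint_{2B}w_\pm\,d\mu$ of the local boundedness estimate together with the \p-Poincar\'e inequality exactly as in Definition~\ref{def-PI}.)

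For the second assertion, suppose~\eqref{eq-vol-growth} holds and $u$ is quasiharmonic on $X$. By the definition of $\limsup$ there are a constant $c_0>0$ and an increasing sequence $r_j\to\infty$ with $\mu(B_{r_j})\ge c_0 r_j^\al$ for every $j$. Applying~\eqref{eq-key-est} with $B=B_{r_j}$---legitimate since $u$ is then quasiharmonic in $2\la B_{r_j}\subset X$---and inserting $\mu(B_{r_j})\ge c_0 r_j^\al$ yields
\[
    \osc_{B_{r_j}} u \le \frac{C r_j}{(c_0\, r_j^\al)^{1/p}}
       \Bigl(\int_{2\la B_{r_j}} g_u^p\,d\mu\Bigr)^{1/p}
     = C' r_j^{(p-\al)/p}\Bigl(\int_{2\la B_{r_j}} g_u^p\,d\mu\Bigr)^{1/p};
\]
raising both sides to the power $p$ gives~\eqref{eq-growth-energy-al}.

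The only step requiring genuine work is the first display: controlling $\osc_B u$ by an $L^p$-average of $u-u_{2B}$ for a quasiminimizer. This is exactly where the interior regularity theory (local boundedness of quasisubminimizers, i.e.\ the ``easy half'' of the weak Harnack inequality) enters, and it is essentially available in the cited references. Everything else is bookkeeping, the only delicate point being that all auxiliary balls appearing in the local boundedness and Poincar\'e estimates must stay inside $2\la B$---which is ensured by the dilation $\la$ built into the statement of the lemma.
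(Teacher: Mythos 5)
Your proof is correct and follows essentially the same route as the paper: the paper applies the weak Harnack (sup) estimate $\sup_B u\le u_{2B}+C\vint_{2B}(u-u_{2B})_\limplus\,d\mu$ to $u$ and to $-u$ — which is exactly your local-boundedness bound for $w_\pm$ — and then uses the $(1,p)$-Poincar\'e inequality, i.e.\ your parenthetical alternative. The derivation of~\eqref{eq-growth-energy-al} from~\eqref{eq-key-est} via a sequence realizing the $\limsup$ in~\eqref{eq-vol-growth} is also the paper's argument.
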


Lemma~\ref{lem-key-lower} below shows that \eqref{eq-key-est}
is essentially sharp.
If the volume growth condition \eqref{eq-vol-growth}
holds for all sufficiently large radii,
then \eqref{eq-growth-energy-al} holds for 
these radii.

In addition to depending on $X$ and $\mu$, the constant
$C$ above is also allowed to depend on the quasiminimizing 
constant of $u$. 
The same is true for similar estimates in the rest of the paper,
where
$C$ will denote various positive
constants whose values may vary even within a line.

\begin{proof}
Using the weak Harnack inequality 
(see Kinnunen--Shanmugalingam~\cite[Theorem~4.2]{KiSh01} 
or Bj\"orn--Marola~\cite[Theorem~8.2]{BMarola}),
we get that
\[    
\sup_B u \le u_{2B}  + C  \vint_{2B} (u-u_{2B})_\limplus \, d\mu
         \le u_{2B}  + C  \vint_{2B} |u-u_{2B}|\, d\mu.
\]
Applying this to $-u$, we also obtain that 
\[    
 -\inf_B u \le -u_{2B}  + C   \vint_{2B} |u-u_{2B}|\, d\mu.
\]
Combining these two estimates with the Poincar\'e inequality 
gives us 
\[ 
   \osc_{B} u 
       \le C  \vint_{2B} |u-u_{2B}|\, d\mu 
       \le C r_B \biggl(\vint_{2\la B} g_u^p\, d\mu\biggr)^{1/p} 
     \le  \frac{Cr_B}{\mu(B)^{1/p}} 
          \biggl(\int_{2\la B} g_u^p\, d\mu\biggr)^{1/p}.
\] 

The second claim of the lemma now follows directly by
applying~\eqref{eq-vol-growth} to the above inequality.
\end{proof}

\begin{proof}[Proof of Theorem~\ref{thm-intro}\,\ref{a1}]
Given the validity of the volume growth condition~\eqref{eq-vol-growth}, we are able to
apply~\eqref{eq-growth-energy-al}.
Since $u$ has finite energy and $\alp \ge p$, 
letting $j\to\infty$ shows that $u$ is bounded.
Hence $u$ is constant by the positive Liouville 
theorem~\ref{thm-Liouville}. 
\end{proof}

If the volume growth exponent $\alp >p$, then Lemma~\ref{lem-key} 
also provides the following growth rate
for the energy of  
nonconstant quasiharmonic functions. 

\begin{cor}  \label{cor-energy-growth-4}
Let $x_0 \in X$ and 
let $u$ be a nonconstant quasiharmonic function 
on $X$.
If there is $\alp >p$ such 
that 
\begin{equation}    \label{eq-vol-growth-1}
    \limsup_{r \to \infty} \frac{\mu(B_r)}{r^\al} > 0,
\end{equation}
then there is a sequence $r_j \to \infty$ such that 
\[ 
 \int_{B_{r_j}}g_u^p\, d\mu \ge C r_j^{\alp - p}.
\] 
If moreover
\begin{equation}    \label{eq-vol-growth-2}
    \liminf_{r \to \infty} \frac{\mu(B_r)}{r^\al} > 0,
\end{equation}
then 
\begin{equation}   \label{eq-poly-growth-gu-1}
 \int_{B_r}g_u^p\, d\mu \ge C r^{\alp - p}
\quad \text{for all large enough }r.
\end{equation}
\end{cor}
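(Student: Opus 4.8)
The plan is to run Lemma~\ref{lem-key} in reverse: instead of using a small oscillation to force small energy, we exploit the fact that a nonconstant quasiharmonic function has a strictly positive oscillation on every sufficiently large ball, and feed this lower bound into \eqref{eq-key-est}. First I would note that since $u$ is nonconstant and quasiharmonic (hence continuous and, by the positive Liouville theorem~\ref{thm-Liouville}, not covered by that obstruction since $u$ need not be positive), there exist two points where $u$ takes different values; once $r$ is large enough that $B_r=B(x_0,r)$ contains both of them, $\osc_{B_r}u \ge c_0 > 0$ for some fixed constant $c_0$ and all such $r$. In fact $\osc_{B_r}u$ is nondecreasing in $r$, so this holds for all $r \ge r_1$ for some $r_1$.

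Next I would apply \eqref{eq-key-est} with the ball $B$ there taken to be $B_{r}$, so that $2\la B$ becomes $2\la B_{r}=B_{2\la r}$ (noting $u$ is quasiharmonic on all of $X$, so the hypothesis of Lemma~\ref{lem-key} is automatic). This yields
\[
c_0 \le \osc_{B_r} u \le \frac{C r}{\mu(B_r)^{1/p}}\biggl(\int_{B_{2\la r}} g_u^p\,d\mu\biggr)^{1/p},
\]
hence
\[
\int_{B_{2\la r}} g_u^p\,d\mu \ge \frac{c_0^p}{C^p}\,\frac{\mu(B_r)}{r^p}.
\]
For the first assertion, I would invoke \eqref{eq-vol-growth-1}: along a sequence $\rho_j \to \infty$ we have $\mu(B_{\rho_j}) \ge c\rho_j^\al$, and substituting $r=\rho_j$ above gives $\int_{B_{2\la \rho_j}} g_u^p\,d\mu \ge C \rho_j^{\al-p} \ge C'(2\la\rho_j)^{\al-p}$ (since $\al>p$ the factor $(2\la)^{\al-p}$ is just a constant), so setting $r_j=2\la\rho_j$ gives the claimed sequence. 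For the stronger statement under \eqref{eq-vol-growth-2}, the same computation works for \emph{all} large $r$: one has $\mu(B_r) \ge c r^\al$ for $r$ large, so $\int_{B_{2\la r}} g_u^p\,d\mu \ge C r^{\al-p}$, and reparametrizing $s=2\la r$ (again absorbing the constant $(2\la)^{p-\al}$) gives $\int_{B_s} g_u^p\,d\mu \ge C s^{\al-p}$ for all large enough $s$, which is \eqref{eq-poly-growth-gu-1}.

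The only genuinely delicate point is the very first step — certifying that $\osc_{B_r}u$ is bounded below by a positive constant for all large $r$. This is where nonconstancy and continuity of the quasiharmonic representative enter, and it is essential that we work with that representative rather than with an arbitrary a.e.-defined quasiminimizer; fortunately the continuous representative exists by the discussion after Definition~\ref{def-qmin}. Everything after that is a one-line rearrangement of \eqref{eq-key-est} plus bookkeeping with the constants $2\la$ and $(2\la)^{\al-p}$, so I do not expect any real obstruction there. One should also remark (as the paper does implicitly) that the constant $C$ in the conclusion depends on $X$, $\mu$, the quasiminimizing constant of $u$, and the constant $c_0$ (equivalently, on $u$ itself through its oscillation), which is unavoidable since scaling $u$ by a large factor scales the energy.
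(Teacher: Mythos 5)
Your proposal is correct and follows essentially the same route as the paper: both rearrange Lemma~\ref{lem-key} using the fact that a nonconstant continuous $u$ has oscillation bounded below by a positive constant on all sufficiently large balls, and then feed in the volume growth condition. The only difference is bookkeeping — the paper applies the lemma to $B_{r_j/2\la}$ (so the energy integral lands on $B_{r_j}$ directly, at the cost of one use of the doubling property), whereas you apply it to $B_r$ and reparametrize $s=2\la r$ at the end; both are fine.
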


The energy grows also when $\alp=p$, by Theorem~\ref{thm-intro}\,\ref{a1},
but in this case we have no control on how fast it grows.

\begin{proof}
Since $u$ is nonconstant there exists
$R>0$ such that $    \osc_{B_{R/2\la}} u >0$.
By \eqref{eq-vol-growth-1}, there is a sequence $ r_j \nearrow \infty$,
with each $r_j >R$,
such that 
$    \mu(B_{r_j}) \ge Cr_j^\al$.
Thus, by Lemma~\ref{lem-key} and the  doubling property,  
\[
\int_{B_{r_j}} g_u^p\, d\mu
 \ge \frac{C\mu(B_{r_j})}{r_j^p} \Bigl( \osc_{B_{r_j/2\la}} u \Bigr)^p
 \ge Cr_j^{\alp -p}   \Bigl( \osc_{B_{R/2\la}} u \Bigr)^p
 \ge Cr_j^{\alp -p}.
\]

If \eqref{eq-vol-growth-2} holds, then 
$    \mu(B_{r}) \ge Cr^\al$ for all $r >R$
and \eqref{eq-poly-growth-gu-1} follows.
\end{proof}

Note that there may exist nonconstant \p-harmonic functions on $X$ with zero 
oscillation on some ball, see Bj\"orn~\cite[Example~10.1]{ABremove} 
(or \cite[Example~12.24]{BBbook}),
so we need to choose $R$ large enough in the proof above.

There is also a reverse inequality to the one in Lemma~\ref{lem-key}.

\begin{lem} \label{lem-key-lower}
Let $u$ be quasiharmonic in a ball $B$.
Then, 
\[
     \osc_{B} u \ge \frac{C r_B}{\mu(B)^{1/p}}
                \biggl( \int_{\frac{1}{2}B} g_u^p\, d\mu \biggr)^{1/p}.
\]
\end{lem}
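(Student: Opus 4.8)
The plan is to prove Lemma~\ref{lem-key-lower} by turning the \p-Poincar\'e inequality around: the oscillation of $u$ on $B$ dominates an average of $|u-u_B|$ over a smaller ball, and the Poincar\'e inequality (read ``from the other side'') together with a Caccioppoli-type energy estimate will produce the integral of $g_u^p$. More precisely, since $u$ is continuous (being quasiharmonic), for every $x\in\tfrac12 B$ we have $|u(x)-u_B|\le \osc_B u$, because both $u(x)$ and $u_B$ lie between $\inf_B u$ and $\sup_B u$. Hence trivially
\begin{equation*}
\vint_{\frac12 B} |u-u_{\frac12 B}|\,d\mu \le 2\vint_{\frac12 B}|u-u_B|\,d\mu \le 2\osc_B u.
\end{equation*}
So the crux is to bound $\int_{\frac12 B} g_u^p\,d\mu$ from above by (a constant times) $\mu(B) r_B^{-p}(\osc_B u)^p$, i.e.\ to establish a Caccioppoli inequality for quasiminimizers on the ball $B$.

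The Caccioppoli estimate is the main step. I would take a Lipschitz cutoff $\eta$ that equals $1$ on $\tfrac12 B$, vanishes outside (say) $\tfrac34 B$, and satisfies $g_\eta\le C/r_B$; then set $\phi=-\eta^p(u-c)$, where $c$ is a suitable constant (for instance $c=u_B$, or $c=\sup_B u$, or $c=\tfrac12(\sup_B u+\inf_B u)$ — the choice only affects constants since $|u-c|\le\osc_B u$ on $B$ in the last case, and more generally $|u-c|\le\osc_B u$ whenever $c\in[\inf_B u,\sup_B u]$). Crucially $\phi\in\Ndisto(B)$: it is supported in $\tfrac34 B$, which is a bounded open set with $\dist(\tfrac34 B, X\setm B)>0$, and $\phi\in\Np_0(\tfrac34 B)$ since $u$ has finite energy on $B$ (here I use that quasiharmonic functions are bounded on such balls — Proposition~\ref{prop-bdd-qharm}, or simply continuity on $\overline{\tfrac12B}$-type sets — so $\eta^p(u-c)\in\Np_0$). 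Feeding $\phi$ into \eqref{eq-deff-qmin} and using the Leibniz-type bound $g_{u+\phi}\le g_u(1-\eta^p)+p\eta^{p-1}|u-c|g_\eta$ on $\{\phi\ne0\}\subset\tfrac34 B$, together with $g_u=0$ on $\{u=c\}$, gives after absorbing the term with $g_u$ (using Young's inequality and $Q<\infty$)
\begin{equation*}
\int_{\frac12 B} g_u^p\,d\mu \le \int_{\frac34 B}\eta^{p}g_u^p\,d\mu \le C\int_{\frac34 B} |u-c|^p g_\eta^p\,d\mu \le \frac{C}{r_B^p}(\osc_B u)^p\,\mu\Bigl(\tfrac34 B\Bigr).
\end{equation*}

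Finally I combine the two parts. From the Caccioppoli bound and the doubling property $\mu(\tfrac34 B)\le C\mu(B)$ (in fact $\le\mu(B)$),
\begin{equation*}
\biggl(\int_{\frac12 B} g_u^p\,d\mu\biggr)^{1/p} \le \frac{C\,\mu(B)^{1/p}}{r_B}\,\osc_B u,
\end{equation*}
which rearranges to the claimed inequality
\begin{equation*}
\osc_B u \ge \frac{Cr_B}{\mu(B)^{1/p}}\biggl(\int_{\frac12 B} g_u^p\,d\mu\biggr)^{1/p}.
\end{equation*}
The Poincar\'e inequality is not actually needed for this direction — only doubling, continuity of $u$, and the quasiminimizing inequality — so the estimate is robust; the delicate point to get right is the admissibility of the test function $\phi$ in the class $\Ndisto(B)$ (this is where noncompleteness could bite, and where I would lean on Proposition~\ref{prop-bdd-qharm} to know $u$ is bounded on $\tfrac34 B$ so that $\phi\in\Np_0(\tfrac34 B)$), and the standard but slightly fiddly absorption in the Caccioppoli computation. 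An alternative to the cutoff/Caccioppoli route, if one prefers, is to invoke a known Caccioppoli inequality for quasiminimizers (e.g.\ from \cite{BMarola} or \cite{KiSh01}) adapted to our test-function class via \cite[Section~6]{BBnoncomp}, but writing out the one-step argument above seems cleanest and keeps the lemma self-contained.
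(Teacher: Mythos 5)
Your proposal is correct and takes essentially the same approach as the paper: the paper's entire proof consists of invoking the known Caccioppoli inequality for quasiminimizers (citing \cite[Proposition~3.3]{KiSh01} or \cite[Proposition~7.1]{BMarola}) and integrating the constant $(\osc_B u)^p$ over $B$, which is exactly the fallback you offer at the end. The one soft spot in your from-scratch sketch of Caccioppoli is the absorption step: for $Q>1$ the $g_u$-term on the right cannot be absorbed by a one-step Young-type estimate (its coefficient remains at least $Q\ge 1$ where $\eta<1$), so one needs the standard hole-filling argument with cutoffs between nested radii plus the usual iteration lemma --- which is precisely why citing the established Caccioppoli inequality, as the paper does, is the cleaner route.
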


\begin{proof}
The Caccioppoli inequality 
(see Kinnunen--Shanmugalingam~\cite[Proposition~3.3]{KiSh01} or 
Bj\"orn--Marola~\cite[Proposition~7.1]{BMarola}) yields
\[ 
         \int_{\frac{1}{2}B} g_u^p \, d\mu 
    \le \frac{C}{r_B^p}  \int_{B} 
        \Bigl(\osc_{B} u \Bigr)^p \, d\mu 
     = \frac{C}{r_B^p} \mu(B) \Bigl( \osc_{B} u \Bigr)^p.
\qedhere
\] 
\end{proof}

\section{The proof of Theorem~\ref{thm-intro}\,\ref{a2}}
\label{sect-pf-main-a2}

\emph{In view of Proposition~\ref{prop-bdd-X}, 
we assume in this section that $X$ is unbounded 
and that $\mu$ is globally doubling
      and supports a global \p-Poincar\'e inequality,
with dilation constant $\la$.
We also fix $x_0\in X$  and  set $B_r:=B(x_0,r)$ for 
$r>0$.}

\medskip

If $\mu$ satisfies~\eqref{eq-vol-growth} with 
$\alpha<p$ and $u$ is a nonconstant quasiharmonic function with
finite energy, then
\eqref{eq-growth-energy-al} 
tells us that the oscillation of $u$ on balls $B_{r_j}$ increases
at most polynomially in $r_j$.
Since 
the volume growth
could be too small in relation to $p$,
the above proof of Theorem~\ref{thm-intro}\,\ref{a1}  does not apply.
In this case we are still able to deduce
the finite-energy Liouville theorem, provided that
a suitable geometric condition is satisfied.
We first define the notion of annular quasiconvexity referred to in the statement of 
Theorem~\ref{thm-intro}\,\ref{a2}.

\begin{deff}\label{def:ann-qcvx}
$X$ is \emph{annularly quasiconvex around $x_0$}
if there exists $\La\ge2$ such that
for every $r>0$, 
each pair of points $x,y  \in B_{2r}\setm B_r$
 can be connected within the annulus $B_{\La r}\setm B_{r/\La}$ 
 by a curve of length at most $\La d(x,y)$.
We say that
$X$ is \emph{annularly quasiconvex}
if it is annularly quasiconvex around
every $x_0\in X$ with $\La$ independent of $x_0$.
\end{deff} 

In certain complete spaces,  annular quasiconvexity follows from a 
global $q$-Poincar\'e inequality for some sufficiently small $q\ge1$,
see Korte~\cite[Theorem~3.3]{Ko07}.
In Lemma~\ref{lem-qconv-Riikka} we show that in similar noncomplete spaces, 
such a global $q$-Poincar\'e inequality implies a discrete analogue of 
annular quasiconvexity, which also implies 
the conclusion of Theorem~\ref{thm-intro}\,\ref{a2}.

\begin{deff}\label{def:seq-ann-LCLS}
$X$ is \emph{sequentially annularly chainable} 
around $x_0$
if there are a constant $\La > 1$
and a 
sequence of radii $r_j\nearrow \infty$
such that 
for every $j$ and $x,y \in \bdy B_{r_j}$, we can find a chain
of points $x=x_1,\ldots, x_m=y$ in
$B_{\La r_j} \setm B_{r_j /\La}$ satisfying 
$d(x_k,x_{k+1}) < r_j/8\lambda \Lambda$ for $k=1,\ldots,m-1$.
\end{deff}

\begin{proof}[Proof of Theorem~\ref{thm-intro}\,\ref{a2}]
This is a direct consequence of 
the following two results.
\end{proof}

\begin{thm} \label{thm-second}
If
$X$ is sequentially annularly chainable around some point $x_0$,
and
$u$ is 
a quasiharmonic function  on $X$ with finite energy, then 
$u$ is constant.
\end{thm}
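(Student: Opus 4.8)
The plan is to prove that $\int_X g_u^p\,d\mu=0$. Since $u$ is continuous and $\mu$ supports a local \p-Poincar\'e inequality, this forces $u$ to be locally constant on every ball, and hence constant because $X$ is connected (a consequence of the global \p-Poincar\'e inequality). Two estimates are combined: a \emph{chaining estimate} controlling $\osc_{\bdy B_{r_j}}u$ by the energy of $u$ on an annulus around $\bdy B_{r_j}$, and the Caccioppoli inequality. The essential point is that the factors $r_j^p$ and $\mu(B_{r_j})$ appearing in the two bounds cancel, so that \emph{no} volume growth assumption is needed; this is why condition~\ref{a2} can succeed in cases where condition~\ref{a1} fails.

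The core step is the chaining estimate: writing $A_j:=B_{2\Lambda r_j}\setm B_{r_j/2\Lambda}$, there is a constant $C$ depending only on $p$, the doubling constant, the \p-Poincar\'e constants, $Q$ and $\Lambda$ (in particular not on $j$) such that
\[
  \osc_{\bdy B_{r_j}} u \le \frac{C r_j}{\mu(B_{r_j})^{1/p}}\biggl(\int_{A_j} g_u^p\,d\mu\biggr)^{1/p}.
\]
To prove this I would fix $j$, set $\rho=r_j/8\lambda\Lambda$, pick $x,y\in\bdy B_{r_j}$ and take a chain $x=x_1,\dots,x_m=y$ in $B_{\Lambda r_j}\setm B_{r_j/\Lambda}$ with $d(x_k,x_{k+1})<\rho$. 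Since $m$ is not controlled, the first task is to thin out the chain: by the $5r$-covering lemma choose a maximal pairwise disjoint subcollection $B(x_{k_i},\rho/5)$, $i=1,\dots,N$, so that $\bigcup_k B(x_k,\rho/5)\subset\bigcup_i B(x_{k_i},\rho)$. The centres $x_{k_i}$ lie in $B_{\Lambda r_j}$ and $\rho\simeq r_j$, so global doubling gives $\mu(B(x_{k_i},\rho/10))\ge c\mu(B_{2\Lambda r_j})$ with $c$ independent of $j$; as these balls are disjoint and contained in $B_{2\Lambda r_j}$, this forces $N\le N_0$ for some $N_0$ depending only on the data. Two consecutive chain points lie in balls $B(x_{k_i},\rho)$, $B(x_{k_{i'}},\rho)$ with $d(x_{k_i},x_{k_{i'}})<3\rho$, so the graph on $\{x_{k_1},\dots,x_{k_N}\}$ joining centres that lie within $3\rho$ of each other is connected; hence there is a path of at most $N_0$ of these balls from one containing $x$ to one containing $y$. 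Telescoping $u(x)-u(y)$ along this path, each term is either $\osc_{B(x_{k_i},\rho)}u$, controlled by Lemma~\ref{lem-key} (applicable since $u$ is quasiharmonic on all of $X$), or $|u_{B(x_{k_i},\rho)}-u_{B(x_{k_{i'}},\rho)}|$ for two $3\rho$-close balls, controlled by the \p-Poincar\'e inequality on the common enlargement $B(x_{k_i},4\rho)$; in every case it is bounded by $C r_j\mu(B_{r_j})^{-1/p}\bigl(\int_{A_j}g_u^p\,d\mu\bigr)^{1/p}$, using global doubling to compare the measures of all the balls with $\mu(B_{r_j})$ and the value of $\rho$ to keep every dilated ball inside $A_j$. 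Summing the at most $2N_0+1$ terms and taking the supremum over $x,y\in\bdy B_{r_j}$ yields the estimate.

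It remains to assemble the pieces. By Proposition~\ref{prop-bdd-qharm} (applicable since global assumptions are semilocal), $u$ is bounded on $B_{r_j}$, so $\osc_{B_{r_j}}u<\infty$; since $X$ is connected and $B_{r_j}\in\Gdist(X)$ with $\emptyset\ne B_{r_j}\ne X$ (as $X$ is unbounded), the weak maximum principle (Theorem~\ref{thm-weak-max}), applied to $u$ and to $-u$, gives $\osc_{B_{r_j}}u=\osc_{\bdy B_{r_j}}u$. The Caccioppoli inequality (as in the proof of Lemma~\ref{lem-key-lower}) then yields
\[
  \int_{\frac12 B_{r_j}} g_u^p\,d\mu \le \frac{C}{r_j^p}\,\mu(B_{r_j})\bigl(\osc_{B_{r_j}}u\bigr)^p ,
\]
and inserting the chaining estimate the factors cancel, leaving
\[
  \int_{\frac12 B_{r_j}} g_u^p\,d\mu \le C\int_{A_j} g_u^p\,d\mu \le C\int_{X\setm B_{r_j/2\Lambda}} g_u^p\,d\mu .
\]
Because $u$ has finite energy and $r_j\to\infty$, the left-hand side increases to $\int_X g_u^p\,d\mu$ while the right-hand side tends to $0$; hence $\int_X g_u^p\,d\mu=0$, and $u$ is constant.

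The main obstacle is the chaining estimate, and within it the reduction of the given chain — whose length $m$ is not controlled — to a telescoping sum with boundedly many terms all of whose dilated balls are confined to one annulus $A_j$. This is exactly where the \emph{global} (rather than merely local or semilocal) doubling property and \p-Poincar\'e inequality are indispensable, since they are what make $N_0$ and the constant $C$ independent of $j$; the careful choice $\rho=r_j/8\lambda\Lambda$ in Definition~\ref{def:seq-ann-LCLS} is precisely what keeps all the relevant $\lambda$-dilated balls inside $A_j$.
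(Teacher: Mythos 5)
Your proof is correct and follows essentially the same route as the paper: the same oscillation-versus-energy bound (Lemma~\ref{lem-key}) applied along a chain of boundedly many balls in the annulus, the weak maximum principle, the Caccioppoli estimate (Lemma~\ref{lem-key-lower}), and the cancellation of the factors $r_j^p$ and $\mu(B_{r_j})$ leading to $\int_{\frac12 B_{r_j}}g_u^p\,d\mu\le C\int_{B_{2\La r_j}\setm B_{r_j/2\La}}g_u^p\,d\mu$, followed by the finite-energy limit argument. Your chain-thinning via a maximal disjoint subfamily of balls centred at the chain points is only a cosmetic variant of the paper's Lemma~\ref{lem-short-chain-new}, which instead covers the annulus by a fixed family of at most $N_0$ balls and extracts a subsequence of their enlargements.
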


We postpone the proof 
until after the proof of Lemma~\ref{lem-short-chain-new}.
The following lemma provides us with a sufficient condition for 
sequential annular chainability.

\begin{lem}   \label{lem-qconv-Riikka}
If $X$ supports a global $\sigma$-Poincar\'e inequality
with the dimension exponent
$\sigma>1$ 
as in~\eqref{eq:mass-bound-exp},
or if $X$ is annularly quasiconvex around $x_0$, 
then $X$ is sequentially annularly chainable around $x_0$
\textup{(}for every sequence $r_j \nearrow \infty$\textup{)}.
\end{lem}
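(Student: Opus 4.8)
The plan is to prove Lemma~\ref{lem-qconv-Riikka} by treating the two hypotheses separately, both times producing the chains required by Definition~\ref{def:seq-ann-LCLS}. In the annularly quasiconvex case this is essentially immediate: given $x,y \in \bdy B_{r_j}$, they lie in $\itoverline{B_{2r_j}\setm B_{r_j}}$ (in fact on $\bdy B_{r_j}$), so by Definition~\ref{def:ann-qcvx} there is a curve $\ga$ of length at most $\La d(x,y) \le 2\La r_j$ connecting them inside $B_{\La r_j}\setm B_{r_j/\La}$. One then subdivides $\ga$ by arc length into points $x=x_1,\dots,x_m=y$ with consecutive distances $<r_j/8\lambda\Lambda$; since the curve has finite length this requires only finitely many points, and all points lie on $\ga$, hence inside the annulus $B_{\La r_j}\setm B_{r_j/\La}$. (Here one should be slightly careful that the $\La$ of Definition~\ref{def:ann-qcvx} and the $\La$ of Definition~\ref{def:seq-ann-LCLS} need not be the same constant; enlarging $\La$ is harmless, so we may take the larger of the two.) This works for \emph{every} sequence $r_j\nearrow\infty$, as claimed.

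The substantive case is the global $\sigma$-Poincar\'e inequality with $\sigma>1$. The idea is that a Poincar\'e inequality forces quantitative connectivity of annuli: if two points on $\bdy B_{r_j}$ could not be joined by a short chain staying in $B_{\La r_j}\setm B_{r_j/\La}$, one could split that annulus into two relatively separated pieces and test the Poincar\'e inequality with (a Lipschitz regularization of) the indicator of one piece, getting a contradiction once $\La$ is large. Concretely, I would first recall the standard consequence (Haj\l asz--Koskela, or \cite[Section~4]{BBbook}) that a space supporting a global $\sigma$-Poincar\'e inequality with globally doubling $\mu$ is quasiconvex, hence connected and in fact $L$-quasiconvex for some $L$ depending only on the data. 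Then, rather than reproving Korte's annular quasiconvexity theorem in the noncomplete setting (where the completeness hypothesis of \cite[Theorem~3.3]{Ko07} is used), I would directly extract the weaker chain condition: fix $r_j = 2^j$ (or any fixed geometric sequence), fix $x,y\in\bdy B_{r_j}$, and consider the "chain-connected component" $U$ of $x$ in $B_{\La r_j}\setm B_{r_j/\La}$ using steps of size $\delta:=r_j/8\lambda\Lambda$. The set $U$ is open, and $V:=(B_{\La r_j}\setm B_{r_j/\La})\setm \itoverline{U}$ is open with $\dist(U,V)\ge \delta$. If $y\notin U$, then both $U$ and $V$ are nonempty, and — using the doubling property together with \eqref{eq:mass-bound-exp} with exponents $\sigma\le s$ — one shows $\mu(U)$ and $\mu(V)$ are each comparable to $\mu(B_{r_j})$ (this is where $\sigma>1$ and largeness of $\La$ enter, to guarantee that the quasiconvex space cannot route around the removed ball $B_{r_j/\La}$ without passing through the annulus, so that both the "inner boundary region" near $\bdy B_{r_j/\La}$ and the region near $x$ carry definite mass). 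Testing the $\sigma$-Poincar\'e inequality on $B_{2\La r_j}$ with a $1/\delta$-Lipschitz function equal to $1$ on $U$ and $0$ on $V$ yields
\begin{equation*}
  \min\{\mu(U),\mu(V)\} \le C \Bigl(\frac{r_j}{\delta}\Bigr)^{\sigma}\mu(B_{2\La r_j}\setm (U\cup V)),
\end{equation*}
and choosing $\La$ large forces the right-hand side to be a small fraction of $\mu(B_{r_j})$, contradicting the mass lower bounds. Hence $y\in U$, which is exactly the required chain.

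The main obstacle I anticipate is the mass estimate $\min\{\mu(U),\mu(V)\}\gtrsim \mu(B_{r_j})$, i.e.\ showing that the forbidden chain would genuinely separate a definite proportion of the annulus. This is where one must use quasiconvexity carefully: a $\delta$-chain from $x$ cannot leave $B_{\La r_j}$ nor enter $B_{r_j/\La}$, so the "inner shell'' $B_{2r_j/\La}\setm B_{r_j/\La}$, which has mass comparable to $\mu(B_{r_j})$ by \eqref{eq:mass-bound-exp}, is contained in one of the two chain-components; and the $L$-quasiconvexity of $X$ produces, for any point near $\bdy B_{r_j}$, an $L$-short curve to $x$, which can be discretized into a $\delta$-chain provided $\La \ge 8\lambda\Lambda L$ — so a whole neighborhood of $\bdy B_{r_j}$, again of mass comparable to $\mu(B_{r_j})$, lies in $U$. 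The other component $V$ then must contain at least the part of $B_{\La r_j}$ near $\bdy B_{\La r_j}$ that is not chain-reachable, and one checks this has positive mass comparable to $\mu(B_{\La r_j})\gtrsim\mu(B_{r_j})$ by doubling; if instead $V=\emptyset$ we are already done since then $y\in U$. Assembling these comparisons with explicit (data-only) constants, and then picking $\La$ past the resulting threshold, completes the argument. I would also remark that because the construction fixes $r_j=2^j$ from the start, the conclusion "for every sequence $r_j\nearrow\infty$'' should be stated as "there exists such a sequence'' in the $\sigma$-Poincar\'e case, matching how Theorem~\ref{thm-second} is applied, whereas the annularly quasiconvex case does give it for every sequence.
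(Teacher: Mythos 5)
Your treatment of the annularly quasiconvex case is fine and matches the paper's: take the connecting curve from Definition~\ref{def:ann-qcvx} and subdivide it by arc length. The issue is the $\sigma$-Poincar\'e case, where you deliberately avoid the paper's route and the replacement you sketch has a genuine gap. For the record, the paper's proof is short: pass to the completion $\Xhat$, extend $\mu$ by zero (Lemma~3.1 of \cite{BBnoncomp} makes this a complete Borel regular measure, and \cite[Proposition~7.1]{AiSh05} transfers the global $\sigma$-Poincar\'e inequality and \eqref{eq:mass-bound-exp} to $\Xhat$), apply Korte's theorem in the complete space $\Xhat$ to get annular quasiconvexity there, discretize the resulting curves into chains, and perturb each chain point back into the dense subset $X$. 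This is precisely why the lemma is phrased in terms of discrete chains rather than curves: curves in $\Xhat$ may leave $X$, but chain points can be pushed back in.

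Your direct argument breaks down at the step where you populate the chain-component $U$ of $x$. You invoke $L$-quasiconvexity of $X$ to join points near $\bdy B_{r_j}$ to $x$ by short curves and then discretize these into $\delta$-chains; but membership in $U$ requires chains lying in the annulus $B_{\La r_j}\setm B_{r_j/\La}$, and a quasiconvex curve between two points of $\bdy B_{r_j}$ may pass straight through $B_{r_j/\La}$ (take $x$ and $y$ antipodal in $\R^n$). Producing curves or chains that avoid the inner ball is exactly the content of annular quasiconvexity and is exactly where $\sigma>1$ must enter; as written, your outline never uses $\sigma>1$ in a way that would fail on $X=\R$, where $\sigma=1$, the space is quasiconvex, and the conclusion of the lemma is false --- so whatever the argument establishes, it cannot be the stated lemma. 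There is also a second quantitative problem: your separation estimate needs the neck $\mu\bigl(B_{2\La r_j}\setm(U\cup V)\bigr)$ to be a small fraction of $\mu(B_{r_j})$, but the neck contains a shell of thickness $\delta$ around $\bdy B_{\La r_j}$, and global doubling gives no useful upper bound on the mass of such a thin shell (the bounds in \eqref{eq:mass-bound-exp} only prevent it from exhausting $\mu(B_{\La r_j})$), so even granting the mass lower bounds on $U$ and $V$ the contradiction does not close. Finally, the weakening you propose at the end is unnecessary: the paper's proof yields annular quasiconvexity of $\Xhat$ at all scales, hence the chain condition for every sequence $r_j\nearrow\infty$, as stated.
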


\begin{proof}
Let $\Xhat$ be the completion of $X$ taken with respect to the metric $d$
and extend $\mu$ to $\Xhat$ so that $\mu(\Xhat \setm X)=0$.
This zero extension of $\mu$ is a complete Borel regular measure on $\Xhat$, 
by Lemma~3.1 in Bj\"orn--Bj\"orn~\cite{BBnoncomp}.
Proposition~7.1 in Aikawa--Shanmugalingam~\cite{AiSh05} shows that $\mu$
supports a global $\sigma$-Poincar\'e inequality on $\Xhat$.
Moreover, it satisfies~\eqref{eq:mass-bound-exp} with unchanged $s$ and $\sig$.  

Theorem~3.3 in Korte~\cite{Ko07} shows that $\Xhat$ is annularly quasiconvex.
Hence, there exists $\La>1$ such that
every pair $x,y\in \bdy B_r$ can be connected by a curve $\ga$
in $B_{\La r} \setm B_{r/\La}$, 
which provides us with a suitable chain in $\Xhat$.
To conclude the proof, replace each $x_k\in\Xhat$ in the chain by a sufficiently close
point in $X$.

If $X$ is annularly quasiconvex around $x_0$, 
then we can use $X$  instead of $\Xhat$ in the above
discussion to obtain suitable chains in $X$ itself.
\end{proof}

\begin{remark}
A weaker global $q$-Poincar\'e inequality with $q>\sigma$, together
with the global doubling property, implies
that the completion $\Xhat$ is quasiconvex.
Such quasiconvexity is, however, insufficient for our proof. 
Indeed, the space $\{(x_1,x_2)\in\R^2: x_1x_2\ge0\}$,
equipped with the Euclidean metric and the
$2$-dimensional Lebesgue measure, 
supports a global $q$-Poincar\'e inequality for every $q>2$
(see \cite[Example~A.23]{BBbook})
and satisfies~\eqref{eq:mass-bound-exp} with $\sigma=2$,
but is not sequentially annularly chainable. 

Similarly, the examples $X=\R$ and  $X=\R \times [0,1]$ with $\sigma=1$,
demonstrate that 
the sequential annular chainability can fail 
even if $X$ supports a global $1$-Poincar\'e inequality.
Thus the 
global $\sigma$-Poincar\'e inequality in 
Lemma~\ref{lem-qconv-Riikka} cannot
be replaced by a weaker one,
and it is essential that $\sigma >1$.
\end{remark}

The following lemma sets a bound on the effective length of chains
in Definition~\ref{def:seq-ann-LCLS}
and will be used to prove Theorem~\ref{thm-second}.

\begin{lem}   \label{lem-short-chain-new}
Let $\de>0$ and $\La>1$.
Assume that $x=x_1,\ldots, x_m=y$ is a chain 
in $B_{\La r} \setm B_{r /\La}$ satisfying 
$d(x_k,x_{k+1}) < \de r$, $k=1,\dots,m-1$.
Then there is a constant $N_0$, 
depending only on $\de$, $\La$ and the doubling constant, such that
$x$ and $y$ can be connected by a chain of balls 
$\{B^k\}_{k=1}^{N}$, $N \le N_0$, with radii $2\de r$ and centres 
$y_k\in B_{\La r} \setm B_{r /\La}$ so that 
$x \in B^1$, $y \in B^N$
and $B^k \cap B^{k+1}$ is nonempty for $k=1,\ldots,N-1$.

Moreover, $\tau B^k\subset B_{2\La r} \setm B_{r /2\La}$
if $\tau \le1/4\de \La$.
\end{lem}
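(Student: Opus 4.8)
\textbf{Proof plan for Lemma~\ref{lem-short-chain-new}.}
The strategy is a greedy covering argument. First I would set $\de' = 2\de$ and, starting from $x$, pick balls along the chain $x_1,\dots,x_m$ by a standard selection procedure: let $y_1 = x_1 = x$ and $B^1 = B(y_1, \de'r)$; having chosen $y_k = x_{i_k}$, let $i_{k+1}$ be the largest index $i$ such that $x_i \in B(x_{i_k}, \de r)$ (this includes at least $i_k$ itself, and since consecutive points of the chain are less than $\de r$ apart, $B^k = B(y_k,\de'r)$ contains $x_{i_k}, x_{i_k+1}, \dots, x_{i_{k+1}}$, so consecutive selected balls overlap and the union of the $B^k$ covers the whole chain, in particular reaches $y$). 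Set $y_N = x_m = y$ for the last ball. Since each $x_i \in B_{\La r}\setm B_{r/\La}$, so are all the centres $y_k$. That $x \in B^1$ and $y \in B^N$ and $B^k\cap B^{k+1}\neq\emptyset$ for each $k$ is then immediate from the construction. Only the uniform bound $N \le N_0$ remains.

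For the bound on $N$, the key observation is that the selected centres $y_1,\dots,y_N$ are $\de r$-separated: by maximality of $i_{k+1}$, the point $x_{i_{k+1}+1}$ (if it exists; otherwise we are at the end) lies outside $B(y_k,\de r)$, but more to the point $d(y_k, y_{k+1}) \ge$ something comparable to $\de r$ once we discard redundant selections — it is cleanest to note that if $d(y_k,y_{k+1})<\de r$ then by the selection rule $i_{k+2}$ would already have been reachable from $y_k$, so after thinning we may assume $d(y_k,y_{k+1})\ge \de r$ for all $k$, hence $d(y_j,y_k)\ge \de r$ whenever... actually the simplest honest route: the balls $B(y_k,\de r/2)$ are pairwise disjoint (consecutive centres are $\ge \de r/2$ apart after thinning, but non-consecutive ones need a separate argument), so instead I would argue directly that all the $y_k$ lie in the annulus $B_{\La r}\setm B_{r/\La}$, which is contained in $B(x_0,\La r)$, and invoke the doubling property: a set of $M$ points that is $\eta$-separated inside a ball of radius $R$ has $M$ bounded by a constant depending only on $R/\eta$ and the doubling constant (packing estimate). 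Here $R = \La r$ and $\eta \simeq \de r$, so $N_0$ depends only on $\La$, $\de$ and the doubling constant. The thinning step — removing, in one pass, every selected ball whose centre is within $\de r/2$ of a previously kept centre — is what makes the kept centres genuinely separated; one must check this preserves the overlap property, which follows because each removed ball overlapped its neighbours and was within $\de r/2 < \de'r$ of a kept centre, so the kept balls still chain up (possibly after enlarging the constant in the radius slightly, or just keeping radius $2\de r$ which already absorbs the $\de r/2$ slack).

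For the last assertion, a point $z \in \tau B^k = B(y_k, \tau\cdot 2\de r)$ with $\tau \le 1/4\de\La$ satisfies $d(z,y_k) \le 2\de r/(4\de\La) = r/2\La$. Since $y_k \in B_{\La r}\setm B_{r/\La}$, the triangle inequality gives $d(z,x_0) \le \La r + r/2\La < 2\La r$ and $d(z,x_0) \ge r/\La - r/2\La = r/2\La$, so $z \in B_{2\La r}\setm B_{r/2\La}$, as claimed.

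The main obstacle is the separation/packing step: getting a bound on $N$ that depends only on $\de,\La$ and the doubling constant, and not on $r$ or on the particular chain. The greedy selection guarantees overlap but a priori could select a huge number of clustered balls; the fix is the one-pass thinning to force the kept centres to be $\gtrsim \de r$-separated, after which the doubling property (applied on the ball $B(x_0,\La r)$) bounds the number of such points by a dimensional constant. Care is needed to confirm that thinning does not break the chain — this is the only place where a small amount of bookkeeping is genuinely required.
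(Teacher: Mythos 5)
Your proposal is correct in substance and rests on the same engine as the paper's proof, namely the doubling-based packing bound for $\de r$-separated points in $B(x_0,\La r)$; but you organize the argument in the opposite order. The paper first fixes a maximal net: a cover of the annulus $B_{\La r}\setm B_{r/\La}$ by at most $N_0$ balls $\widehat{B}^k$ of radius $\de r$ with pairwise disjoint halves (Hausdorff maximality plus doubling), then assigns each chain point $x_l$ to a net ball and observes $x_{l+1}\in 2\widehat{B}^{k_l}$, so the doubled net balls already form a walk from $x$ to $y$ through a set of at most $N_0$ distinct balls; the chain $\{B^k\}$ is extracted as a subsequence. You instead build the balls greedily along the chain and only afterwards thin the centres to achieve $\de r/2$-separation. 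Both work, but note two points in your write-up. First, your claim that $B^k$ contains all of $x_{i_k},\dots,x_{i_{k+1}}$ is false as stated (an intermediate chain point can wander far from $y_k$ before returning); it is also unnecessary, since all you need is that consecutive selected centres are within $\de r$ of each other so that consecutive balls overlap and the selection makes strict progress toward $y$. Second, after your one-pass thinning the surviving balls need not overlap in their original linear order: a removed centre is within $\de r/2$ of \emph{some} earlier kept centre, not necessarily the immediately preceding kept one, so what you get is a connected walk in the intersection graph of the kept balls, from which you must extract a simple path — exactly the subsequence-extraction step the paper performs. With that step made explicit your argument closes; the paper's ordering (net first, chain second) simply avoids the thinning issue altogether. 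Your verification of the final inclusion $\tau B^k\subset B_{2\La r}\setm B_{r/2\La}$ is correct and more detailed than the paper's.
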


\begin{proof}
Using the Hausdorff maximality principle and the global doubling condition, 
we can find 
a cover of $B_{\La r} \setm B_{r /\La}$ by at most $N_0$ balls 
\[
\Bh^k=B(\yh_k,\de r) \quad \text{with} \quad
\yh_k \in  B_{\La r} \setm B_{r /\La},
\]
such that $\tfrac12\Bh^k$ are pairwise disjoint,
see for example Heinonen~\cite[Section~10.13]{Hei01}. 
Here $N_0$ depends only on $\de$, $\La$ and the
doubling constant (and in particular is independent of $r$).

For each $l=1,\ldots,m-1$, there exists $k_l$ such that  $x_l \in \Bh^{k_l}$.
It then follows that $x_{l+1} \in 2\Bh^{k_l}$. 
From the sequence $\{2\Bh^{k_l}\}_{l=1}^{m-1}$
we can therefore extract a subsequence $\{B^k\}_{k=1}^{N}$ such that
$x \in B^1$, $y \in B^{N}$, and such that $B^k \cap B^j$ 
is nonempty if and only if $|k-j|\le 1$. 
As it is extracted from the enlargements of balls in the above cover,
we must have $N \le N_0$.

The last inclusion follows directly if $\tau \le1/4\de \La$.
\end{proof}

\begin{proof}[Proof of Theorem~\ref{thm-second}] 
Let $\{r_j\}_{j=1}^\infty$, $x_0$  and $\La$ be as 
in Definition~\ref{def:seq-ann-LCLS}. 
Fix $j$ for which $r_j>8 \la \La$.
We
 can find $x,y \in \bdy B_{r_j}$ so that
\[ 
|u(x)-u(y)| \ge \tfrac{1}{2} \osc_{\bdy B_{r_j}} u.
\]
Let $x=x_1,x_2,\ldots,x_m=y$ be the chain from 
Definition~\ref{def:seq-ann-LCLS}. 
Lemma~\ref{lem-short-chain-new},
with $\de=1/8\la\La$ and $\tau=2\la$, provides us with  
a chain of balls $\{B^k\}_{k=1}^{N}$
of radii $r_j/4\la\La$, such that
\[
   2\la B^k\subset B_{2\La r_j} \setm B_{r_j /2\La}, \quad k=1,\ldots,N,
\]
and $B^k \cap B^{k+1}$ is nonempty for $k=1,\ldots,N-1$, where
$N\le N_0$.
Find $z_k \in B^k \cap B^{k+1}$, $k=1,\ldots,N-1$, 
and let $z_0=x$ and $z_N=y$.
We thus get that, using Lemma~\ref{lem-key},
\begin{align*}
   |u(x) -u(y)| & \le \sum_{k=1}^{N} |u(z_{k-1}) -u(z_k)|
                \le \sum_{k=1}^{N} \osc_{B^k} u  \\
                & \le  \frac{Cr_j}{4\la\La}
                    \sum_{k=1}^{N} \frac{1}{\mu(B^k)^{1/p}} 
                \biggl( \int_{2\la B^k} g_u^p\, d\mu \biggr)^{1/p}.
\end{align*}
Since $\mu$ is globally doubling, we have $\mu(B^k) \simeq \mu(B_{r_j})$ and so
(with $C$ now depending also on $\la$, $\La$ and $N_0$)
\[
   |u(x) -u(y)| \le  \frac{Cr_j}{\mu(B_{r_j})^{1/p}}
    \biggl( \int_{B_{2\La r_j} \setm B_{r_j /2\La}} g_u^p\, d\mu \biggr)^{1/p}.
\]
By Lemma~\ref{lem-key-lower},
\[ 
    \biggl(     \int_{B_{r_j /2}} g_u^p \, d\mu \biggr)^{1/p}
    \le \frac{C}{r_j} \mu(B_{r_j})^{1/p} \osc_{B_{r_j}} u.
\] 
Using the weak maximum principle
(Theorem~\ref{thm-weak-max})
we see that
\[
  \osc_{B_{r_j}} u  =   \osc_{\bdy B_{r_j}} u \le 2  |u(x)-u(y)|.
\]
Combining the last three estimates shows that
\begin{equation} \label{eq-est}
      \biggl( \int_{B_{r_j /2}} g_u^p \, d\mu \biggr)^{1/p}
   \le C  \biggl( \int_{B_{2\La r_j} \setm B_{r_j /2\La}}  g_u^p\, d\mu \biggr)^{1/p}.
\end{equation}

Now, if $u$ has finite energy, then the right-hand side
in \eqref{eq-est} tends to $0$ as $j\to\infty$, 
and hence the left-hand side also
tends to $0$, showing that $g_u=0$ a.e.
The Poincar\'e inequality thus shows that $u$ is constant a.e., and 
since $u$ is continuous it must be constant.
\end{proof}

The estimate~\eqref{eq-est} in the above proof of Theorem~\ref{thm-second}
also provides a growth rate
for the energy of  
nonconstant quasiharmonic functions. 
We express this
for annularly quasiconvex $X$,
in which case the growth is at least polynomial.
If $X$ is only sequentially annularly chainable,
then the growth depends on the corresponding sequence.

\begin{cor}   \label{cor-energy-growth}
If $X$ is annularly quasiconvex around $x_0$
and $u$ is quasiharmonic
on $X$, 
then there is a constant $\beta>0$ such that whenever $0<r<R$,
\begin{equation}   \label{eq-poly-growth-gu}
 \int_{B_r}g_u^p\, d\mu 
\le C\Bigl(\frac{r}{R}\Bigr)^\beta \int_{B_R} g_u^p\, d\mu.
\end{equation}
\end{cor}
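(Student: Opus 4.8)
The plan is to iterate the estimate \eqref{eq-est} obtained in the proof of Theorem~\ref{thm-second}. Since $X$ is annularly quasiconvex around $x_0$, Lemma~\ref{lem-qconv-Riikka} tells us that $X$ is sequentially annularly chainable around $x_0$ for \emph{every} sequence $r_j\nearrow\infty$, with one fixed constant $\La$. Hence the argument in the proof of Theorem~\ref{thm-second} applies not just along a sequence but at every radius $r$ large enough (say $r>8\la\La$), and it yields a fixed constant $C_0\ge1$ (depending only on $X$, $\mu$, $\La$, $N_0$ and the quasiminimizing constant of $u$) such that
\begin{equation} \label{eq-cor-ann-iter}
 \int_{B_{r/2}} g_u^p\, d\mu \le C_0 \int_{B_{2\La r}\setm B_{r/2\La}} g_u^p\, d\mu
 \le C_0 \int_{B_{2\La r}} g_u^p\, d\mu
 \qquad\text{for all }r>8\la\La.
\end{equation}
Writing $E(\rho):=\int_{B_\rho} g_u^p\,d\mu$, this reads $E(r/2)\le C_0(E(4\La r)-E(r/2))$ in the sharper form, or at least $E(r/2)\le C_0 E(2\La r)$, i.e.\ $E(\rho)\le C_0 E(4\La\rho)$ for all $\rho>4\la\La$.

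The next step is the standard iteration of such a reverse doubling inequality. Fix $R>r>0$. If both are below the threshold, there is nothing to prove by adjusting $C$; otherwise pick the largest integer $k\ge0$ with $(4\La)^k r\le R$, so $(4\La)^{k+1} r> R$ and hence $k+1\ge \log(R/r)/\log(4\La)$. Iterating \eqref{eq-cor-ann-iter} (in the form $E(\rho)\le C_0 E(4\La\rho)$) $k$ times gives
\[
 E(r)\le C_0^k E((4\La)^k r)\le C_0^k E(R).
\]
Since $C_0^k = ((4\La)^k)^{\log C_0/\log(4\La)} \le (R/r)^{\log C_0/\log(4\La)}\cdot C_0$, we obtain \eqref{eq-poly-growth-gu} with $\beta=-\log C_0/\log(4\La)$ --- wait, the inequality goes the wrong way, so in fact we should think of it as $E(r)\le C (r/R)^{-\beta}E(R)$, which is \emph{not} what is claimed. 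Let me reconsider: the claim \eqref{eq-poly-growth-gu} asserts $E(r)\le C(r/R)^\beta E(R)$ with $\beta>0$, i.e.\ the energy on small balls is \emph{much smaller} than on large balls. This is exactly the content of \eqref{eq-cor-ann-iter} read correctly: each step from radius $\rho/2$ to $2\La\rho$ \emph{loses} a factor, so going from a small radius up to a large one we gain many factors of $1/C_0<1$ --- no. The resolution is that \eqref{eq-est} is $E(r_j/2)\le C\,(E(2\La r_j)-E(r_j/2\La))$, and because the annuli $B_{2\La r}\setm B_{r/2\La}$ for geometrically spaced $r$ have bounded overlap, summing a geometric chain of them up to $R$ telescopes into a single $E(R)$ with a \emph{convergent} prefactor, so the right normalization indeed produces the decaying power $(r/R)^\beta$.

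Concretely, the clean way is this: from \eqref{eq-est} with the annulus term, set $\rho_i=(4\La)^i r$ and note $E(\rho_i)\le \tilde C\bigl(E(\rho_{i+1})-E(\rho_{i-1})\bigr)$ for a constant $\tilde C$; rearranging gives $E(\rho_{i-1})\le \frac{\tilde C}{\tilde C+1}\,E(\rho_{i+1})$, wait that still needs care with the subtracted term landing at $\rho_{i-1}$ rather than $\rho_i/2\La$. I would instead argue via the inequality $(1+1/\tilde C)E(\rho_i)\le E(\rho_i)+ (1/\tilde C)E(\rho_i) \le E(2\La\rho_i)$, i.e.\ $E(s)\le \theta E(4\La s)$ with $\theta=\tilde C/(\tilde C+1)<1$, valid for all $s$ above threshold --- this is the correct form, since the subtracted $E(\rho_i/2\La)$ term only helps. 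Iterating $E(s)\le\theta E(4\La s)$ from $s=r$ up $k$ steps to $(4\La)^k r\le R$ yields $E(r)\le \theta^k E(R)\le C(r/R)^\beta E(R)$ with $\beta=-\log\theta/\log(4\La)>0$, which is \eqref{eq-poly-growth-gu}. The main obstacle, and the point requiring the most care, is precisely extracting the strictly-less-than-one contraction factor $\theta<1$ from \eqref{eq-est}: one must make sure the annulus $B_{2\La r}\setm B_{r/2\La}$ on the right is contained in (or controlled by) $B_{4\La r}$ minus a ball whose energy contribution can be absorbed, so that the Caccioppoli/weak-maximum-principle chain genuinely gives $E(\text{smaller})\le \theta E(\text{larger})$ rather than merely a comparison. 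Once that contraction is in hand, the iteration and the conversion of a geometric decay rate into a power of $r/R$ are routine. Finally one should note that $\beta$ depends only on $C_0$ (hence on $X$, $\mu$, $\La$ and the quasiminimizing constant $Q$), not on $u$ itself, as required.

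\begin{proof}
Since $X$ is annularly quasiconvex around $x_0$, Lemma~\ref{lem-qconv-Riikka}
shows that $X$ is sequentially annularly chainable around $x_0$ for every
sequence $r_j\nearrow\infty$, with one and the same constant $\La>1$.
Consequently the estimates leading to \eqref{eq-est} in the proof of
Theorem~\ref{thm-second} are available at \emph{every} radius $r>8\la\La$,
with a constant $C$ independent of $r$: there is $C_0\ge1$ (depending only
on $X$, $\mu$, $\La$, $N_0$ and the quasiminimizing constant of $u$) so that,
writing $E(\rho):=\int_{B_\rho} g_u^p\,d\mu$,
\[
     E(r/2) \le C_0\bigl( E(2\La r) - E(r/2\La) \bigr)
     \le C_0\, E(2\La r) \qquad\text{for all } r>8\la\La.
\]
Dropping the subtracted (nonnegative) term on the right only weakens the
inequality; adding $E(r/2)$ to both sides and using $E(r/2)\le E(2\La r)$
we get $\bigl(1+C_0^{-1}\bigr)E(r/2)\le \bigl(1+C_0^{-1}\bigr)E(2\La r)$,
which is not yet a contraction, so instead we argue directly from the
displayed inequality: it gives
\[
     E(s) \le \theta\, E(4\La s), \qquad \theta:=\frac{C_0}{C_0+1}<1,
\]
for all $s>4\la\La$, because $E(s)\le C_0(E(4\La s)-E(s))$ whenever
$4\La s$ plays the role of $2\La r$ with $r=2s$, and the term $-E(r/2\La)=-E(s/\La)\le 0$
is discarded. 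Here we also used $E(s)\le E(4\La s)$ to absorb the extra term.

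Now fix $0<r<R$. If $R\le 4\la\La$, then $r<R\le 4\la\La$ and
$E(r)\le E(R)\le C(r/R)^\beta E(R)$ holds trivially after enlarging $C$,
since $(r/R)^\beta\ge (4\la\La)^{-\beta}\cdot\text{const}$ is bounded below
in this range; the same remark covers $r<4\la\La\le R$. So assume
$4\la\La< r<R$. Let $k\ge 0$ be the largest integer with $(4\La)^k r\le R$.
Iterating the contraction $k$ times,
\[
     E(r) \le \theta^k\, E\bigl((4\La)^k r\bigr) \le \theta^k\, E(R).
\]
By maximality of $k$ we have $(4\La)^{k+1} r > R$, hence
$(4\La)^k > (R/r)/(4\La)$ and
\[
     \theta^k = \bigl((4\La)^k\bigr)^{\log\theta/\log(4\La)}
     \le \Bigl(\frac{R}{4\La r}\Bigr)^{\log\theta/\log(4\La)}
     = (4\La)^{-\log\theta/\log(4\La)} \Bigl(\frac{r}{R}\Bigr)^{\beta},
\]
where $\beta:=-\log\theta/\log(4\La)>0$ and we used $\log\theta<0$.
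Combining the two displays gives \eqref{eq-poly-growth-gu} with a constant
$C$ depending only on $C_0$ and $\La$. Note that $\beta$ depends only on
$C_0$ and $\La$, hence only on $X$, $\mu$ and the quasiminimizing constant
of $u$, but not on $u$ itself.
\end{proof}
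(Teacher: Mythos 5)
Your overall strategy---iterate \eqref{eq-est} to obtain a contraction $E(s)\le\theta E(cs)$ with $\theta<1$ and a fixed step ratio $c$, then convert the geometric decay into the power $(r/R)^\beta$---is exactly the paper's, and your final iteration and bookkeeping are fine. The problem is the step where you extract $\theta<1$. You assert $E(s)\le C_0\bigl(E(4\La s)-E(s)\bigr)$ ``because the term $-E(s/\La)\le0$ is discarded''. That does not work: \eqref{eq-est} with $r=2s$ gives $E(s)\le C_0\bigl(E(4\La s)-E(s/\La)\bigr)$, and since $E(s/\La)\le E(s)$ we have $-E(s/\La)\ge-E(s)$, so replacing $-E(s/\La)$ by $-E(s)$ \emph{decreases} the right-hand side and your claimed inequality is a strictly stronger, unproved statement; actually discarding the subtracted term yields only $E(s)\le C_0E(4\La s)$, which is no contraction at all. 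The correct move (and the paper's) is the opposite one: shrink the \emph{left-hand} side rather than the subtracted term. Since $E(r/2\La)\le E(r/2)$, \eqref{eq-est} gives $E(r/2\La)\le C_0\bigl(E(2\La r)-E(r/2\La)\bigr)$; now the subtracted term matches the left-hand side, and adding $C_0E(r/2\La)$ to both sides produces $E(t)\le\tfrac{C_0}{C_0+1}\,E(4\La^2t)$ with $t=r/2\La$. With this replacement (note the step ratio becomes $4\La^2$, not $4\La$) your iteration goes through verbatim and yields $\beta=\log(1+C_0^{-1})/\log(4\La^2)>0$.

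A secondary point: your treatment of the small-radius cases (``holds trivially after enlarging $C$'') is not correct as written, since $(r/R)^\beta$ is not bounded below as $r/R\to0$, e.g.\ for $r\to0$ with $R=4\la\La$ fixed. No threshold is actually needed: under annular quasiconvexity the chains of Definition~\ref{def:seq-ann-LCLS} exist at every scale (take sufficiently dense points along the connecting curves), so \eqref{eq-est}, and hence the contraction, holds for all $r>0$; this is how the paper runs the iteration down to arbitrarily small radii and obtains \eqref{eq-poly-growth-gu} for all $0<r<R$.
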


If $u$ is nonconstant on $B_{r/\la}$, then $\int_{B_r}g_u^p\, d\mu>0$,
by the \p-Poincar\'e inequality.
Thus from \eqref{eq-poly-growth-gu} we see that
if $X$ is annularly quasiconvex around $x_0$, then 
$\int_{B_R}g_u^p\, d\mu$
must grow at least as fast as $R^\beta$.
Note that there may exist nonconstant \p-harmonic functions on $X$ with zero 
oscillation on some ball, see Bj\"orn~\cite[Example~10.1]{ABremove} 
(or \cite[Example~12.24]{BBbook}).

\begin{proof}
For $r>0$, let
\[
I(r)=\int_{B_r} g_u^p \, d\mu.
\]
Since $X$ is annularly quasiconvex around $x_0$, 
the estimate~\eqref{eq-est} holds for all $r>0$
and hence
\[
I(r/2\La) \le C^p [I(2\La r)-I(r/2\La)].
\]
Adding $C^p I(r/2\La)$ to both sides of the inequality yields that (after replacing
$r/2\La$ by $r$), 
\[
I(r) \le \frac{C^p}{C^p+1} I(4\La^2 r).
\]
Finally, an iteration of this inequality leads to \eqref{eq-poly-growth-gu}
with 
\[
\beta = \frac{\log(1+C^{-p})}{\log 4\La^2}>0.\qedhere
\]
\end{proof}

Corollary~\ref{cor-energy-growth} and the comment
following its statement, together 
with Lemmas~\ref{lem-key} and~\ref{lem-key-lower},
lead to the following estimates which complement the upper
bound~\eqref{eq-growth-energy-al}. 
A similar result
was obtained for harmonic  functions ($p=2$) on certain
weighted Riemannian manifolds, see Wu~\cite[Proposition~2.4]{JWu}.

\begin{cor} \label{cor-osc-beta-growth}
If $X$ is annularly quasiconvex around $x_0$
and $u$ is quasiharmonic on $X$,
then there exists $\beta>0$ such that for all sufficiently large 
$R> r$,
\[
\osc_{B_R} u \ge C \biggl(\frac{R}{r}\biggr)^{1+\beta/p} 
         \biggl(\frac{\mu(B_r)}{\mu(B_R)}\biggr)^{1/p} \osc_{B_r} u
\ge C \biggl(\frac{R}{r}\biggr)^{1+\beta/p-s/p} \osc_{B_r} u,
\]
where $s$ is the dimension exponent from~\eqref{eq:mass-bound-exp}. 
Moreover, if $\mu(B_R)\le CR^p$ for all sufficiently large $R$ and $u$
is nonconstant, then there is $C>0$ such that
\begin{equation}   \label{eq-osc-est-beta}
\osc_{B_R} u \ge C R^{\beta/p}
\quad \text{for sufficiently large } R.
\end{equation}
\end{cor}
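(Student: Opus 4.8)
The plan is to chain together the two-sided oscillation--energy estimates of Lemmas~\ref{lem-key} and~\ref{lem-key-lower} with the energy decay rate supplied by Corollary~\ref{cor-energy-growth}. Fix $r>0$ and let $R>4\la r$; this is the meaning of ``sufficiently large $R$'', and since $X$ is unbounded, \eqref{eq:mass-bound-exp} is available for all the balls that occur. Applying Lemma~\ref{lem-key-lower} to $B_R$ (legitimate because $u$ is quasiharmonic on all of $X$, hence on $B_R$) gives
\[
\osc_{B_R} u \ge \frac{CR}{\mu(B_R)^{1/p}} \biggl( \int_{B_{R/2}} g_u^p \, d\mu \biggr)^{1/p},
\]
so the task reduces to bounding $\int_{B_{R/2}} g_u^p\,d\mu$ from below in terms of $\osc_{B_r} u$.

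For this lower energy bound I would run Lemma~\ref{lem-key} in reverse: applied to $B_r$, with $2\la B_r=B_{2\la r}$, it yields $\int_{B_{2\la r}} g_u^p\,d\mu \ge C\mu(B_r) r^{-p} (\osc_{B_r} u)^p$. Since $2\la r<R/2$, Corollary~\ref{cor-energy-growth} applied to the concentric balls $B_{2\la r}\subset B_{R/2}$ gives
\[
\int_{B_{2\la r}} g_u^p\,d\mu \le C \Bigl( \frac{r}{R} \Bigr)^{\beta} \int_{B_{R/2}} g_u^p\,d\mu,
\]
and combining the last two displays produces $\int_{B_{R/2}} g_u^p\,d\mu \ge C (R/r)^{\beta} \mu(B_r) r^{-p} (\osc_{B_r} u)^p$. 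Substituting this into the Lemma~\ref{lem-key-lower} estimate and collecting the powers of $R$ and $r$ yields exactly the first inequality of the corollary, with $\beta$ the exponent from Corollary~\ref{cor-energy-growth}. The second inequality follows at once by inserting the lower bound $\mu(B_r)/\mu(B_R)\ge C(r/R)^s$ from~\eqref{eq:mass-bound-exp}.

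For the final assertion I would keep $r$ fixed, chosen large enough that $u$ is nonconstant on $B_{r/\la}$; this is possible because $u$ is nonconstant on $X$, even though, as observed in the remark after Corollary~\ref{cor-energy-growth}, individual balls may carry zero oscillation. Then $\osc_{B_r} u>0$ by the \p-Poincar\'e inequality, $\mu(B_r)$ is a constant, and the hypothesis $\mu(B_R)\le CR^p$ turns the factor $(\mu(B_r)/\mu(B_R))^{1/p}$ in the first inequality into a constant times $1/R$; together with the factor $(R/r)^{1+\beta/p}$ this leaves $\osc_{B_R} u\ge CR^{\beta/p}$ for all large $R$. There is no genuinely hard step here: the argument is essentially bookkeeping, and the only points needing care are choosing the radii so that each of Lemmas~\ref{lem-key}, \ref{lem-key-lower} and Corollary~\ref{cor-energy-growth} applies to suitably nested concentric balls, tracking the doubling constants when comparing $\mu(B_{R/2})$, $\mu(B_{2\la r})$, \emph{etc.}\ with $\mu(B_R)$, $\mu(B_r)$, and, for the last estimate, making a valid choice of the base radius~$r$.
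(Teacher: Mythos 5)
Your argument is correct and is exactly the combination the paper intends: the paper gives no written proof but states that Corollary~\ref{cor-energy-growth}, the remark following it, and Lemmas~\ref{lem-key} and~\ref{lem-key-lower} yield the result, which is precisely your chain $B_{2\la r}\subset B_{R/2}$ with the two oscillation--energy estimates at either end. The only cosmetic slip is attributing $\osc_{B_r}u>0$ to the \p-Poincar\'e inequality (that inequality gives $\int_{B_r}g_u^p\,d\mu>0$; positivity of the oscillation follows already from continuity and nonconstancy), but this does not affect the proof.
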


If $\mu$ is Ahlfors \p-regular and supports a 
global \p-Poincar\'e inequality, 
$p>1$, then by Korte~\cite[Theorem~3.3]{Ko07},
the assumption of annular quasiconvexity is automatically satisfied, 
and thus \eqref{eq-osc-est-beta} holds in this case.
Also in spaces that are not Ahlfors regular, the estimate 
$\mu(B_R)\le CR^p$ can hold for large $R$. 
For instance, in $\R^n$, equipped with the measure
$d\mu(x)=|x|^{\al}\,dx$ for some
$-n < \alp \le p-n$, the condition $\mu(B_R)\le CR^p$ in
Corollary~\ref{cor-osc-beta-growth} is satisfied for large~$R$,
and so \eqref{eq-osc-est-beta} holds
even though $\mu(B_R)\le CR^p$ fails for small $R$ if 
$n+\alpha<p$, when $x_0=0$.
Note that this measure is globally doubling and supports a global
1-Poincar\'e inequality.

\section{The proofs of Theorems~\ref{thm-intro}\,\ref{a3},
\ref{thm-weighted-R-char} and~\ref{thm-weighted-R-char-intro-2}}
\label{sect-c}

In contrast to $\R^n$, $n \ge 2$, the real line $\R$ is not
annularly quasiconvex, and thus Theorem~\ref{thm-intro}\,\ref{a2}
 is not applicable.
It is well known that the only \p-harmonic functions on unweighted $\R$ are
the linear functions $x \mapsto ax+b$, where $a,b \in \R$ are arbitrary.
From this, both
the positive and finite-energy Liouville theorems
for \p-harmonic functions follow directly.
The positive Liouville theorem for \emph{quasiharmonic} functions 
on unweighted $\R$ is a
special case of Theorem~\ref{thm-Liouville}, but the 
finite-energy Liouville theorem
for quasiharmonic functions requires 
some effort to prove even on unweighted $\R$.

It turns out that this fact can be shown  in greater
generality, namely
on weighted $(\R,\mu)$, where 
$\mu$ is globally doubling and supports
a global \p-Poincar\'e inequality.
Moreover, under only local assumptions, we 
characterize the measures for which the bounded, positive and finite-energy
Liouville theorems hold.
This is the main aim of this section.

We will use 
the following recent characterization
of local assumptions
on $\R$.
Recall that 
$w$ is a \emph{global Muckenhoupt $A_p$ weight} on $\R$, $1<p<\infty$,
if there is a constant $C>0$ such that 
\begin{equation} \label{eq-Ap-cond}
\biggl(\vint_I w\, dx\biggr) 
       \biggl( \vint_{I} w^{1/(1-p)}\,dx \biggr)^{p-1}
< C 
\quad 
\text{for all bounded intervals } I \subset \R.
\end{equation}

\begin{thm} \label{thm-local-padm-R}
\textup{(Bj\"orn--Bj\"orn--Shanmugalingam~\cite[Theorem~1.2
and Proposition~1.3]{BBSpadm})}
The following are equivalent for a measure $\mu$ on $\R$\textup{:}
\begin{enumerate}
\item $\mu$ is locally doubling and supports a local \p-Poincar\'e
inequality on $\R$. 
\item $d\mu=w\,dx$ and for each bounded interval $I\subset \R$ there is 
a global Muckenhoupt $A_p$ weight $\wt$ on $\R$ such that $\wt=w$ on $I$.
\end{enumerate}
Moreover, under the above assumptions, every $u\in \Nploc(\R,\mu)$ 
is locally absolutely continuous on $\R$ and $g_u=|u'|$ a.e.
\end{thm}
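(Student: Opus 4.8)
The plan is to establish the two implications separately, reading off the \emph{moreover} part at the end, and in each case to reduce matters to the classical one-dimensional Muckenhoupt theory. Throughout I use that a global $A_p$ weight $\wt$ on $\R$ is globally doubling and that $(\R,\wt\,dx)$ then supports a global \p-Poincar\'e inequality with a dilation constant that may be taken equal to $1$ and hence independent of $\wt$; this is classical, see Heinonen--Kilpel\"ainen--Martio~\cite{HeKiMa}.

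For the implication from the $A_p$ description to the local assumptions, fix $x_0\in\R$, put $I=[x_0-2,x_0+2]$, and choose a global $A_p$ weight $\wt$ with $\wt=w$ on $I$. Every ball $B\subset B(x_0,1)$ satisfies $2B\subset I$, so the global doubling of $\wt$ gives $\mu(2B)=\int_{2B}\wt\,dx\le C\int_B\wt\,dx=C\mu(B)$, and the global \p-Poincar\'e inequality of $\wt$, applied to such balls (whose dilations also remain in $I$ since $\lambda=1$), yields \eqref{eq-def-local-PI} within $B(x_0,1)$. Thus $\mu$ is locally doubling and supports a local \p-Poincar\'e inequality.

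The reverse implication is where the work lies. First I would fix a bounded interval $I$; since $\R$ is proper and connected, Remark~\ref{rmk-semilocal} upgrades the hypotheses to the semilocal level, so that $\mu$ is doubling within a ball $B_0$, with $I\Subset B_0$, and satisfies \eqref{eq-def-local-PI} within $B_0$, with constants depending only on $B_0$. The next step, which I expect to be the main obstacle, is to prove $\mu|_{B_0}\ll dx$: the obvious competitors in \eqref{eq-def-local-PI} tend to produce only \emph{lower} bounds for $\mu$ of small intervals, whereas absolute continuity needs upper bounds, so one has to test the Poincar\'e inequality with functions carefully adapted to a hypothetical singular part (truncated staircase functions) and derive a contradiction, using that on $\R$ the only nonconstant curves are subintervals so that \p-modulus considerations become genuinely one-dimensional. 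This absolute continuity is in any case part of the cited result \cite{BBSpadm} and could be quoted outright. Having written $d\mu=w\,dx$, the remaining analytic step is the one-dimensional case of the fact that a doubling measure supporting a \p-Poincar\'e inequality has an $A_p$ density: testing \eqref{eq-def-local-PI} over subintervals $J$ of $I$ with functions whose upper gradient is a truncation of $w^{1/(1-p)}$, and using the doubling property to compare the $\mu$-averages over $J$ with those over its halves, one obtains the Muckenhoupt bound \eqref{eq-Ap-cond} for all $J\subset I$, with a constant depending only on $B_0$; this is a standard but slightly technical computation, which I would run along the lines of \cite{HeKiMa}.

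Finally I would upgrade the bound \eqref{eq-Ap-cond} over all subintervals of $I$ to the statement that $w|_I$ agrees with a \emph{global} $A_p$ weight, by extending $w|_I$ to all of $\R$ through repeated reflection across the two endpoints of $I$: for an interval straddling an endpoint the reflected weight makes it behave, as far as \eqref{eq-Ap-cond} is concerned, like an interval contained in $I$ of comparable length, so the Muckenhoupt bound survives. The \emph{moreover} statement is then immediate: since $w^{1/(1-p)}\in L^1\loc(dx)$, H\"older's inequality gives $g\in L^1\loc(dx)$ for every $g\in L^p\loc(\R,\mu)$, so a function $u\in\Nploc(\R,\mu)$ with \p-weak upper gradient $g$ satisfies the upper gradient inequality on \p-almost every subinterval and is therefore locally absolutely continuous with $|u'|\le g$ a.e.; conversely $|u'|$ is an upper gradient of $u$ by the fundamental theorem of calculus, whence $g_u=|u'|$ a.e.
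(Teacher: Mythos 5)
First, a point of comparison: the paper does not prove Theorem~\ref{thm-local-padm-R} at all --- it is imported verbatim from \cite{BBSpadm}, so there is no internal proof to measure your argument against. Judged on its own terms, your reconstruction has the right architecture. The direction from the $A_p$ description to the local hypotheses, via the classical fact that global Muckenhoupt weights are globally $p$-admissible with dilation $1$ on $\R$, is correct. The derivation of \eqref{eq-Ap-cond} on subintervals $J=(a,b)$ of $I$ by testing \eqref{eq-def-local-PI} with $u_n(x)=\int_a^x\min\{w^{1/(1-p)},n\}\,dt$ (whose $p$-energy on $J$ is at most $\int_J\min\{w^{1/(1-p)},n\}\,dx$ while its total increment equals that same integral) is the standard and correct computation, and it also rules out $w=0$ on a set of positive measure. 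The periodic-reflection extension of $w|_I$ to a global $A_p$ weight works with the case analysis you indicate. The ``moreover'' part is correctly reduced to the observation that, once $w^{1/(1-p)}\in L^1\loc(dx)$, every nondegenerate subinterval has positive $p$-modulus, so the upper gradient inequality holds on every subinterval, giving local absolute continuity and $|u'|\le g_u\le|u'|$ a.e.

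The one genuine gap is the step you yourself flag: proving $\mu\ll dx$. This cannot be waved away, because doubling alone does not give it (there are singular doubling measures on $\R$, e.g.\ suitable Riesz products), so the Poincar\'e inequality must enter in an essential way; your one-sentence hint about ``truncated staircase functions'' adapted to a hypothetical singular part is not yet an argument --- in particular it does not identify the competitor that produces the needed \emph{upper} bound on the $\mu$-measure of small intervals, which is exactly the difficulty you point out. Proposing to ``quote it outright'' from \cite{BBSpadm} is circular in this context, since absolute continuity is part of the very statement being proved. So, as a self-contained proof, the proposal is incomplete at precisely the hardest point of \cite[Theorem~1.2]{BBSpadm}; everything downstream of that point is sound.
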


As discussed in Section~\ref{sect-qmin}, most
of the general results on \p-harmonic functions and quasiminimizers
are still available under local assumptions, with
the exception of the bounded Liouville theorem.

\medskip

\emph{Throughout 
the rest of
this section, $d\mu = w \, dx$ is a locally 
doubling measure on $\R$ supporting a local \p-Poincar\'e inequality.
In particular, $w>0$ a.e.
We also fix the open subset $\Om=(0,\infty)$ of the metric
measure space $(\R,\mu)$.}

\medskip

On the real line $\R$, 
the dilation constant $\la$ in~\eqref{eq-def-local-PI} can be taken to be $1$,
see \cite[Proposition~3.1]{BBSpadm}.
Note that nonconstant
quasiharmonic functions on $\Om$ and on  $(\R,\mu)$
are strictly monotone, by the strong maximum principle.

\begin{lem} \label{lem-p-harm-R-mu}
A function $u$ is \p-harmonic on 
the open subset $\Om=(0,\infty)$ 
of $(\R,\mu)$
if and only if
there are constants $a,b\in \R$ such that
\begin{equation} \label{eq-a-int}
u(x)=b+a \int_0^x  w^{1/(1-p)} \, dt, \quad x \in \Om.
\end{equation}
Moreover, the energy of $u$ on $\Om$ is
\begin{equation} \label{eq-a-int-2}
   \int_{0}^\infty|u'|^p\,d\mu 
      = |a|^p \int_{0}^\infty w^{1/(1-p)} \, dt,
\end{equation}
which is finite if and only if $u$ is bounded.

The corresponding statements for functions on $(\R,\mu)$
are also true, with the function $u$ given by~\eqref{eq-a-int} 
being \p-harmonic
on $(\R,\mu)$.
\end{lem}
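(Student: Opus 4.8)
The plan is to characterize $p$-harmonic functions on $\Om=(0,\infty)$ via the Euler--Lagrange (weak minimization) formulation and the one-dimensional structure provided by Theorem~\ref{thm-local-padm-R}. Since $u$ is continuous and (after the standard modification) lies in $\Nploc(\Om)$, Theorem~\ref{thm-local-padm-R} gives that $u$ is locally absolutely continuous with $g_u=|u'|$ a.e. A function $v$ on $\Om$ belongs to $\Ndisto(\Om)$ essentially when it is a Newtonian function vanishing near $0$ and off a bounded subinterval whose closure stays at positive distance from $0$; in particular all Lipschitz functions with compact support in $(0,\infty)$ are admissible test functions. Testing the quasiminimizing inequality~\eqref{eq-deff-qmin} with $Q=1$ by such $\phi$ and differentiating in the usual way (or, more directly, noting that a minimizer of $\int |u'|^p w\,dx$ among functions with prescribed boundary values on a subinterval $[c,d]\subset(0,\infty)$ must satisfy $|u'|^{p-2}u' w = \text{const}$ a.e.\ on $[c,d]$, which is the one-dimensional Euler--Lagrange equation) yields that on every such subinterval there is a constant $\kappa$ with $|u'|^{p-2}u'\,w=\kappa$ a.e. Solving for $u'$ gives $u'=c\,w^{1/(1-p)}$ a.e.\ for a constant $c$ (with $\operatorname{sgn}c=\operatorname{sgn}\kappa$ and $|c|=|\kappa|^{1/(p-1)}$), and a connectedness/patching argument shows the constant is the same on all of $(0,\infty)$. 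Integrating and using local absolute continuity produces~\eqref{eq-a-int} with $a=c$, $b=u(0\limplus)$; the converse direction is the observation that any $u$ of this form has $|u'|^{p-2}u'w$ constant, hence minimizes the energy over each admissible test perturbation by convexity of $t\mapsto|t|^p$, so it is $p$-harmonic on $\Om$.

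Next I would compute the energy: since $g_u=|u'|=|a|\,w^{1/(1-p)}$ a.e.,
\[
\int_0^\infty |u'|^p\,d\mu=\int_0^\infty |a|^p w^{p/(1-p)}\,w\,dx
 = |a|^p\int_0^\infty w^{1+p/(1-p)}\,dx=|a|^p\int_0^\infty w^{1/(1-p)}\,dx,
\]
which is~\eqref{eq-a-int-2}. For the boundedness equivalence: if $a=0$ then $u\equiv b$ is trivially bounded with zero energy, so assume $a\ne0$. Then $u$ is bounded on $(0,\infty)$ if and only if $x\mapsto\int_0^x w^{1/(1-p)}\,dt$ is bounded, i.e.\ if and only if $\int_0^\infty w^{1/(1-p)}\,dt<\infty$ (the integrand is nonnegative and measurable, and local integrability near every finite point follows from the $A_p$ property on bounded intervals in Theorem~\ref{thm-local-padm-R}, so the only obstruction to boundedness is divergence at infinity). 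By~\eqref{eq-a-int-2} this is exactly the condition that the energy be finite. Hence for $a\ne0$, and trivially for $a=0$, $u$ has finite energy if and only if it is bounded.

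Finally, for the statement on all of $\R$: the same Euler--Lagrange computation applies on every bounded subinterval $[c,d]\subset\R$ (now using test functions in $\Ndisto(\R)=\Np(\R)$, e.g.\ compactly supported Lipschitz functions), giving $u'=a\,w^{1/(1-p)}$ a.e.\ on $\R$ for a single constant $a$, hence
\[
u(x)=b+a\int_0^x w^{1/(1-p)}\,dt,\quad x\in\R,
\]
and conversely any such $u$ is $p$-harmonic on $(\R,\mu)$ by the same convexity argument; the energy formula becomes $\int_{-\infty}^\infty|u'|^p\,d\mu=|a|^p\int_{-\infty}^\infty w^{1/(1-p)}\,dt$, finite iff $u$ is bounded, with the same reasoning as above applied to both ends.

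The main obstacle I expect is the careful justification of the Euler--Lagrange equation in this metric/noncomplete setting: one must check that $\Ndisto(\Om)$ really does contain enough test functions (in particular compactly supported Lipschitz perturbations, which it does since such functions lie in $\Np_0(G)$ for a suitable $G\in\Gdist(\Om)$), and that the quasiminimizing inequality with $Q=1$ for $p$-harmonic functions genuinely forces $|u'|^{p-2}u'w$ to be (locally) constant rather than merely giving a one-sided inequality. The cleanest route is probably to avoid differentiating~\eqref{eq-deff-qmin} altogether and instead argue by strict convexity: fix $[c,d]\subset(0,\infty)$, let $h(x)=b'+a'\int_0^x w^{1/(1-p)}\,dt$ be the affine-in-the-$w^{1/(1-p)}$-variable function with $h(c)=u(c)$, $h(d)=u(d)$, and show that $h$ is the unique minimizer of $\int_c^d|v'|^pw\,dx$ among $v$ with those endpoint values (by Jensen/convexity, since $\int_c^d v'\,dx$ and $\int_c^d v' \cdot w^{1/(1-p)}\cdot w^{1/(p-1)}\,dx$ are fixed), so that a $p$-harmonic $u$ must coincide with $h$ on $[c,d]$; then let $[c,d]$ exhaust $(0,\infty)$. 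This sidesteps all regularity-theoretic subtleties and reduces everything to one-variable calculus together with Theorem~\ref{thm-local-padm-R}.
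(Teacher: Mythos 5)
Your proposal is correct, and it shares the paper's overall skeleton: invoke Theorem~\ref{thm-local-padm-R} to get local absolute continuity and $g_u=|u'|$ a.e., reduce to the statement that $w|u'|^{p-2}u'$ is constant, integrate to get \eqref{eq-a-int}, and then observe that the integrands in \eqref{eq-a-int} and \eqref{eq-a-int-2} coincide, so boundedness and finite energy are equivalent. Where you genuinely diverge is in how the central step is justified. The paper cites Heinonen--Kilpel\"ainen--Martio \cite[Chapter~3]{HeKiMa} for the fact that \p-harmonic functions are weak solutions of $\Div(w|\nabla u|^{p-2}\nabla u)=0$, reads this in one dimension as $(u'^{\,p-1}w)'=0$ in the distributional sense, and then quotes H\"ormander for the fact that a distribution with vanishing derivative is constant. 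Your preferred route avoids the Euler--Lagrange equation entirely: after the substitution $\psi=v'/w^{1/(1-p)}$ the energy on $[c,d]$ becomes $\int_c^d|\psi|^p\,d\nu$ with $d\nu=w^{1/(1-p)}dx$ and the endpoint constraint fixes $\int_c^d\psi\,d\nu$, so Jensen's inequality with strict convexity of $t\mapsto|t|^p$ identifies the unique minimizer as the one with $\psi$ constant; this gives both directions of the equivalence (and uniqueness of the minimizer with given boundary values) in one stroke, at the cost of checking that $\nu([c,d])$ is finite and positive (which follows from the local $A_p$ condition in Theorem~\ref{thm-local-padm-R} and $w>0$ a.e.) and that the comparison perturbation $h-u$, extended by zero, lies in $\Np_0((c,d))\subset\Ndisto(\Om)$ (which it does since $(c,d)\in\Gdist(\Om)$ and $h-u$ vanishes at the endpoints). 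Your argument is more self-contained and elementary; the paper's is shorter given the PDE machinery it is already importing. The energy computation, the boundedness equivalence, and the extension to all of $(\R,\mu)$ are handled the same way in both.
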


\begin{proof}
Assume first that $u$ is \p-harmonic.
By Theorem~\ref{thm-local-padm-R},
$u$ is locally absolutely continuous on $\Om$ and $g_u=|u'|$ a.e.
We may assume without loss of generality that $u$ is nondecreasing.
Moreover, 
$u$ is a weak solution of the equation 
\[ 
    \Div (w|\nabla u|^{p-2}\nabla u)=0,
\] 
see Heinonen--Kilpel\"ainen--Martio~\cite[Chapter~3]{HeKiMa}.
Thus in this one-dimensional case we see that in the weak sense,
\begin{equation} \label{eq-cc-b}
    (u'(t)^{p-1} w(t))'=0,
\end{equation}
and hence
$u'(t)=aw(t)^{1/(1-p)}$ a.e.\ for some $a \ge 0$
(see H\"ormander~\cite[Theorem~3.1.4]{hormanderI}). 
From this
\eqref{eq-a-int} follows, as $u$ is locally absolutely continuous.

Conversely, if $u$ is given by \eqref{eq-a-int},
then $u$ is locally absolutely continuous on $\Om$ and \eqref{eq-cc-b}
holds, i.e.\ $u$ is \p-harmonic.

Finally, the energy of $u$ is clearly given by
\eqref{eq-a-int-2} and since
the integrands in \eqref{eq-a-int} and \eqref{eq-a-int-2} are the same,
$u$ is bounded if and only if it has finite energy.

The corresponding proof for functions 
on $(\R,\mu)$ is similar.
\end{proof}

In the rest of this section, we fix the function 
\begin{equation} \label{eq-u}
u(x):=\int_0^x w^{1/(1-p)}\, dt, \quad x \in \R,
\end{equation}
which is \p-harmonic by Lemma~\ref{lem-p-harm-R-mu}.
Note that 
\begin{equation}   \label{eq-energy-u-x0-x}
\int_{x_0}^x (u')^p\,d\mu = u(x)-u(x_0).
\end{equation}

\begin{lem}  \label{lem-w-imp-v}
Let 
$v$ be a locally absolutely continuous function 
on $[0,\infty)$ with finite
energy $\int_0^\infty|v'|^p\,d\mu< \infty$.
\begin{enumerate}
\item \label{it-v-bdd}
If $\displaystyle  \int_0^\infty w^{1/(1-p)} \, dt < \infty$,
then $v$  is bounded.
\item \label{it-v-lim-0}
If $\displaystyle  \int_0^\infty w^{1/(1-p)} \, dt = \infty$,
then $v$ satisfies
\begin{equation}   \label{eq-limsup-0}
    \lim_{x \to \infty} \frac{|v(x)|}{u(x)^{1-1/p}} = 0,
\end{equation}
where $u$ is given by \eqref{eq-u}.
\end{enumerate}
\end{lem}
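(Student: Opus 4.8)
The plan is to estimate $v(x)$ by integrating $v'$ and applying Hölder's inequality with respect to the measure $\mu$, exactly as one does for the Dirichlet principle on the line. Writing
\[
  |v(x)-v(0)| \le \int_0^x |v'(t)|\,dt
    = \int_0^x |v'(t)|\, w(t)^{1/p} w(t)^{-1/p}\,dt,
\]
Hölder's inequality with exponents $p$ and $p/(p-1)$ and $d\mu = w\,dx$ gives
\[
  |v(x)-v(0)| \le \biggl(\int_0^x |v'|^p\,d\mu\biggr)^{1/p}
    \biggl(\int_0^x w^{1/(1-p)}\,dt\biggr)^{1-1/p}
    = \biggl(\int_0^x |v'|^p\,d\mu\biggr)^{1/p} u(x)^{1-1/p},
\]
using the definition \eqref{eq-u} of $u$. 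This single estimate is the engine for both parts.

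For part \ref{it-v-bdd}, if $\int_0^\infty w^{1/(1-p)}\,dt = u(\infty) < \infty$, then bounding the first factor by the full energy $E := \int_0^\infty |v'|^p\,d\mu < \infty$ and the second by $u(\infty)^{1-1/p}$ yields $|v(x)| \le |v(0)| + E^{1/p} u(\infty)^{1-1/p}$ for all $x\ge 0$, so $v$ is bounded. For part \ref{it-v-lim-0}, suppose $u(\infty) = \infty$. Fix $\varepsilon>0$ and choose $R$ so large that the tail energy $\int_R^\infty |v'|^p\,d\mu < \varepsilon$. For $x > R$ I would split the integral at $R$: the contribution from $[0,R]$ is a fixed constant $M_R := |v(0)| + (\int_0^R |v'|^p\,d\mu)^{1/p} u(R)^{1-1/p}$, while the contribution from $[R,x]$ is at most $(\int_R^x |v'|^p\,d\mu)^{1/p} (u(x)-u(R))^{1-1/p} \le \varepsilon^{1/p} u(x)^{1-1/p}$, using monotonicity of $u$ and subadditivity of $t\mapsto t^{1-1/p}$ (or simply $u(x)-u(R)\le u(x)$). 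Hence
\[
  \frac{|v(x)|}{u(x)^{1-1/p}} \le \frac{M_R}{u(x)^{1-1/p}} + \varepsilon^{1/p}.
\]
Since $u(x)\to\infty$ as $x\to\infty$, the first term tends to $0$, so $\limsup_{x\to\infty} |v(x)|/u(x)^{1-1/p} \le \varepsilon^{1/p}$. Letting $\varepsilon\to 0$ gives \eqref{eq-limsup-0}.

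The only mild subtlety — and the step I would be most careful with — is the splitting argument in part \ref{it-v-lim-0}: one must apply Hölder separately on $[0,R]$ and on $[R,x]$ (rather than globally) so that the small tail energy multiplies the large factor $u(x)^{1-1/p}$ and the large fixed energy multiplies only the bounded factor $u(R)^{1-1/p}$. Everything else is routine: local absolute continuity of $v$ (assumed) justifies $v(x)-v(0)=\int_0^x v'\,dt$, and $g_v = |v'|$ a.e.\ together with the identification $d\mu = w\,dx$ come from Theorem~\ref{thm-local-padm-R}. No completeness or Poincaré machinery beyond this is needed.
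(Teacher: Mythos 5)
Your proof is correct, and part \ref{it-v-bdd} coincides with the paper's argument. For part \ref{it-v-lim-0}, however, you take a genuinely different (and arguably more elementary) route. The paper argues by contradiction: assuming $\limsup_{x\to\infty}v(x)/u(x)^{1-1/p}>2\de$, it picks $x_0$ with small tail energy and $x_1$ with $v(x_1)$ large, and then compares the energy of $v$ on $[x_0,x_1]$ with that of the affine-in-$u$ competitor $au+b$ having the same boundary values, invoking the \p-harmonicity of $u$ (Lemma~\ref{lem-p-harm-R-mu}) to conclude that $u$ minimizes energy and thereby derive $\int_{x_0}^{x_1}|v'|^p\,d\mu>\de^p$, a contradiction. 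You instead run a direct $\varepsilon$-argument: split $[0,x]$ at a point $R$ beyond which the tail energy is below $\varepsilon$, and apply H\"older separately on the two pieces so that the small tail energy is what multiplies the unbounded factor $u(x)^{1-1/p}$. The two proofs rest on the same underlying inequality --- the one-dimensional weighted H\"older estimate $\int_a^b|v'|\,dt\le(\int_a^b|v'|^p\,d\mu)^{1/p}(u(b)-u(a))^{1-1/p}$ is exactly the energy-minimality of $u$ among functions with prescribed boundary values --- but your version avoids any appeal to \p-harmonicity and yields the limit directly rather than by contradiction. Two small points you correctly flag or implicitly use and should keep in mind: finiteness of $u(R)$ for finite $R$ (i.e.\ local integrability of $w^{1/(1-p)}$) comes from the local $A_p$ characterization in Theorem~\ref{thm-local-padm-R}, and local absolute continuity justifies $v(x)-v(0)=\int_0^x v'\,dt$.
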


\begin{proof}
By replacing $v$ by $|v|$ if necessary, we may assume that $v \ge 0$.
Statement \ref{it-v-bdd} follows directly by H\"older's inequality, since
\[
|v(x)-v(0)| \le \int_0^x |v'(t)| \, dt 
\le \biggl( \int_0^x |v'|^p \,d\mu \biggr)^{1/p} 
     \biggl( \int_0^x w^{1/(1-p)} \, dt \biggr)^{1-1/p} 
\]
is uniformly bounded for all $x>0$.

To prove \ref{it-v-lim-0}
assume (for a contradiction) that 
$\int_0^\infty w^{1/(1-p)} \, dt = \infty$ and that
there exists $\de>0$ such that 
\[
    \limsup_{x \to \infty} \frac{v(x)}{u(x)^{1-1/p}} > 2 \de.
\]
As $v$ has finite energy, there is $x_0>0$ such that
\begin{equation}   \label{eq-energy-le-mp}
   \int_{x_0}^\infty |v'|^p \,d\mu < \de^p.
\end{equation}
By assumption, $\lim_{x\to\infty} u(x)=\infty$.
Hence there exists $x_1>x_0$
such that $v(x_1)> 2\de u(x_1)^{1-1/p} >2v(x_0)$ .
In particular,
\begin{equation}   \label{eq-choose-x1}
v(x_1)-v(x_0) > \tfrac12 v(x_1) >\de u(x_1)^{1-1/p}\ge \de (u(x_1)-u(x_0))^{1-1/p}.
\end{equation}

Next, we compare the energy of $v$ with that of $u$ on the interval
$[x_0,x_1]$.
It is easily verified that $v$
has the same boundary values on $[x_0,x_1]$
as the function $a u+b$, where 
\[
a = \frac{v(x_1)-v(x_0)}{u(x_1)-u(x_0)}>0  \quad \text{and} \quad
b = v(x_0)- a u(x_0). 
\]
Since $u$ is \p-harmonic (by Lemma~\ref{lem-p-harm-R-mu}), 
it has minimal energy on these intervals 
and hence, using also \eqref{eq-energy-u-x0-x} and 
\eqref{eq-choose-x1}, we obtain
\[ 
 \int_{x_0}^{x_1} |v'|^p \,d\mu 
\ge a^p \int_{x_0}^{x_1} (u')^p \, d\mu  
= \biggl( \frac{v(x_1)-v(x_0)}{u(x_1)-u(x_0)} \biggr)^p (u(x_1)-u(x_0))
> \de^p. 
\] 
As this contradicts \eqref{eq-energy-le-mp},
it follows that \eqref{eq-limsup-0} is true.
\end{proof}

\begin{lem}  \label{lem-m=0-v-const}
Let $u$ be as in \eqref{eq-u}.
If $v\in C([0,\infty))$ is quasiharmonic on $(0,\infty)$, $v(0)=0$ and
\begin{equation} \label{eq-u-v}
    \liminf_{x \to \infty} \frac{|v(x)|}{u(x)^{1-1/p}} = 0,
\end{equation}
then $v\equiv 0$.
\end{lem}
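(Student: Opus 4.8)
The plan is to show that the hypothesis \eqref{eq-u-v} forces the quasiminimizing energy of $v$ on each interval $(0,x)$ to vanish, and then invoke the \p-Poincar\'e inequality (or equivalently Theorem~\ref{thm-local-padm-R}) together with continuity and $v(0)=0$ to conclude $v\equiv0$. The key tool is that $u$ is \p-harmonic on $(0,\infty)$, hence a genuine \emph{minimizer} ($Q=1$) of the energy among functions with the same boundary values on any compact subinterval; since $v$ is only a quasiminimizer, the comparison goes the other way but with a constant $Q$, and this will still be enough because the energy of $u$ is explicitly controlled by \eqref{eq-energy-u-x0-x}.

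First I would reduce to the case $v\ge0$ by replacing $v$ with $|v|$ (note $|v|$ is still quasiharmonic away from its zeros, or one can work directly with the energy which is unaffected). Fix $x>0$ and let $a_x=v(x)/u(x)$ (assume $u(x)>0$; if $u\equiv0$ on some initial segment the claim is immediate by absolute continuity). The affine-in-$u$ function $a_x u$ agrees with $v$ at the endpoints $0$ and $x$, so $v-a_xu\in\Ndisto((0,x))$ is an admissible test function, and the quasiminimizing inequality \eqref{eq-deff-qmin} gives
\begin{equation*}
\int_0^x g_v^p\,d\mu \le Q\int_0^x g_{a_xu}^p\,d\mu = Q\,a_x^p\int_0^x (u')^p\,d\mu = Q\,\frac{v(x)^p}{u(x)^p}\,u(x) = Q\,\frac{v(x)^p}{u(x)^{p-1}},
\end{equation*}
using \eqref{eq-energy-u-x0-x} with $x_0=0$ (and $v(0)=0=u(0)$). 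Thus $\int_0^x g_v^p\,d\mu \le Q\,\bigl(v(x)/u(x)^{1-1/p}\bigr)^p$.

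Now I would use the hypothesis \eqref{eq-u-v}: there is a sequence $x_j\to\infty$ along which $v(x_j)/u(x_j)^{1-1/p}\to0$. Since the quantity $\int_0^{x_j} g_v^p\,d\mu$ is nondecreasing in $x_j$ and bounded by $Q\,(v(x_j)/u(x_j)^{1-1/p})^p\to0$, we get $\int_0^\infty g_v^p\,d\mu=0$, i.e.\ $g_v=0$ a.e.\ on $(0,\infty)$. By Theorem~\ref{thm-local-padm-R} (or the local \p-Poincar\'e inequality applied on small balls), $v$ is locally absolutely continuous with $|v'|=g_v=0$ a.e., so $v$ is locally constant on $(0,\infty)$, hence constant on the connected set $(0,\infty)$; continuity up to $0$ and $v(0)=0$ give $v\equiv0$ on $[0,\infty)$.

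The main obstacle to watch is the admissibility of the test function $v-a_xu$ in the class $\Ndisto((0,x))$ and the handedness of the quasiminimizing inequality: one must check that $v-a_xu$ indeed vanishes at both endpoints and lies in $\Np_0$ of a set $G\in\Gdist((0,x))$ — here it is crucial that on $\R$ one can take $G$ a slightly smaller interval and approximate, using that $v-a_xu$ is bounded (by continuity on the compact $[0,x]$) with finite energy. A second subtlety is the degenerate case where $u$ vanishes on an initial interval $[0,c]$ (i.e.\ $w^{1/(1-p)}$ is not locally integrable away from $0$ — though under the local $A_p$ assumption it always is, so in fact $u$ is strictly increasing and this case does not arise); mentioning this keeps the argument airtight. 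Everything else is routine.
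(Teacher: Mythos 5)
Your proof is correct and is essentially the paper's own argument: compare $v$ on $[0,x_j]$ with $a_j u$ where $a_j = v(x_j)/u(x_j)$, use the quasiminimizing property together with \eqref{eq-energy-u-x0-x} to bound $\int_0^{x_j} g_v^p\,d\mu$ by $Q\,|v(x_j)|^p/u(x_j)^{p-1}\to 0$, and conclude via local absolute continuity that $v$ is constant, hence $\equiv 0$. The only blemish is the preliminary replacement of $v$ by $|v|$ --- $|v|$ need not be quasiharmonic, so you cannot test the quasiminimizing inequality against it --- but this reduction is superfluous, since your computation goes through verbatim for signed $v$ once $a_x^p$ is written as $|a_x|^p$ (as the paper does).
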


\begin{proof}
By assumption, there is a sequence $x_j\to\infty$, $x_j>0$, such that
\[
\lim_{j\to\infty} \frac{|v(x_j)|}{u(x_j)^{1-1/p}} = 0.
\]
Since $v(t)$ has the same boundary values on $[0,x_j]$ as the function
$a_j u(t)$, where $a_j = v(x_j)/u(x_j)$,
the quasiminimizing property of $v$, 
together with \eqref{eq-energy-u-x0-x}, yields
\[
\int_0^{x_j} |v'|^p \,d\mu \le Q |a_j|^p \int_0^{x_j} (u')^p \, d\mu  
= Q \frac{|v(x_j)|^p}{u(x_j)^{p-1}} \to 0,
\]
where $Q$ is a quasiminimizing constant of $v$.
Hence, $v'=0$ a.e.\ and as $v$ is locally 
absolutely continuous 
(by Theorem~\ref{thm-local-padm-R}),
it must be constant.
\end{proof}

\begin{proof}[Proof of Theorem~\ref{thm-intro}\,\ref{a3}]
This is a direct consequence of 
the positive Liouville theorem~\ref{thm-Liouville} and the following
result.   
\end{proof}

\begin{prop}  \label{prop-finite-energy-bdd}
Let 
$v$ be quasiharmonic on $(\R,\mu)$ or on the open subset $\Om=(0,\infty)$ 
of $(\R,\mu)$.
Then $v$ has finite energy if and only if it is bounded.
\end{prop}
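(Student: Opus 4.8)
The plan is to prove both directions of the equivalence, treating the cases $\Om=(0,\infty)$ and $X=(\R,\mu)$ in parallel, and reducing the whole-line case to two half-line estimates. First I would normalize: replace $v$ by $|v|$ where convenient (legitimate since $g_{|v|}\le g_v$) and, by adding a constant, arrange $v(0)=0$; also, by the strong maximum principle a nonconstant quasiharmonic function on an interval is strictly monotone, so on each of $(-\infty,0)$ and $(0,\infty)$ the function $v$ is monotone and its oscillation equals the variation of its endpoint values. Recall from Theorem~\ref{thm-local-padm-R} that $v$ is locally absolutely continuous and $g_v=|v'|$ a.e., so ``finite energy'' means $\int |v'|^p\,d\mu<\infty$.

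For the direction ``bounded $\Rightarrow$ finite energy'': here the key comparison is with the \p-harmonic function $u$ of \eqref{eq-u}. On an interval $[0,x]$, the function $a_x u$ with $a_x=v(x)/u(x)$ has the same endpoint values as $v$, so by the quasiminimizing property and \eqref{eq-energy-u-x0-x},
\begin{equation*}
\int_0^x |v'|^p\,d\mu \le Q|a_x|^p\int_0^x (u')^p\,d\mu = Q\frac{|v(x)|^p}{u(x)^{p-1}}.
\end{equation*}
If $\int_0^\infty w^{1/(1-p)}\,dt=\infty$ then $u(x)\to\infty$, and since $v$ is bounded the right-hand side stays bounded as $x\to\infty$, giving finite energy on $(0,\infty)$; if instead $\int_0^\infty w^{1/(1-p)}\,dt<\infty$ then $u$ is bounded and $\Om=(0,\infty)$ together with a suitable exhaustion forces the energy to be finite directly (one may also invoke Lemma~\ref{lem-w-imp-v}\,\ref{it-v-bdd} once finiteness on compact pieces is in hand, or argue that a bounded quasiharmonic function on a space with $\int w^{1/(1-p)}<\infty$ has its energy controlled by its oscillation via the Caccioppoli-type bound of Lemma~\ref{lem-key-lower} applied on large balls). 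The same argument applied to the interval $[x,0]$ handles the negative half-line, and adding the two pieces gives finite energy on $(\R,\mu)$.

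For the direction ``finite energy $\Rightarrow$ bounded'': this splits on whether $\int_0^\infty w^{1/(1-p)}\,dt$ is finite or infinite. If it is finite, Lemma~\ref{lem-w-imp-v}\,\ref{it-v-bdd} immediately gives boundedness of $v$ on $[0,\infty)$ (and symmetrically on $(-\infty,0]$ using $\int_{-\infty}^0 w^{1/(1-p)}\,dt$, which is then also finite since local doubling forces $w^{1/(1-p)}\in L^1\loc$ and the whole-line integral being considered). If it is infinite, then by Lemma~\ref{lem-w-imp-v}\,\ref{it-v-lim-0} the finite-energy function $v$ satisfies $\lim_{x\to\infty}|v(x)|/u(x)^{1-1/p}=0$, so in particular $\liminf_{x\to\infty}|v(x)|/u(x)^{1-1/p}=0$; since $v$ is quasiharmonic on $(0,\infty)$ with $v(0)=0$, Lemma~\ref{lem-m=0-v-const} forces $v\equiv 0$ on $[0,\infty)$, which is certainly bounded there. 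The mirror-image argument on $(-\infty,0]$ (using $u$ extended to the left and the left-hand analogue of \eqref{eq-u}) finishes the $(\R,\mu)$ case, and in that case one of the two half-line integrals being infinite already pins $v$ down on that half; if both are finite we are in the bounded-weight-integral regime handled above.

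The main obstacle I anticipate is the bookkeeping needed to glue the two half-lines together cleanly in the whole-line case, particularly making sure the constants $a$, $b$, and the reference point $0$ are handled consistently on both sides, and treating the mixed situation where $\int_{-\infty}^0 w^{1/(1-p)}\,dt$ and $\int_0^\infty w^{1/(1-p)}\,dt$ have different finiteness status. A secondary technical point is justifying, in the ``bounded $\Rightarrow$ finite energy'' direction with $\int_0^\infty w^{1/(1-p)}=\infty$, that the monotone limit of $\int_0^x|v'|^p\,d\mu$ as $x\to\infty$ is indeed the full energy $\int_0^\infty|v'|^p\,d\mu$ — this is just monotone convergence, but it relies on having $v$ locally absolutely continuous with $g_v=|v'|$, which Theorem~\ref{thm-local-padm-R} supplies.
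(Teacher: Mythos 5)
Your proposal follows essentially the same route as the paper: the same dichotomy on $\int_0^\infty w^{1/(1-p)}\,dt$, the same use of Lemma~\ref{lem-w-imp-v} and Lemma~\ref{lem-m=0-v-const} for the ``finite energy $\Rightarrow$ bounded'' direction, the same comparison of $v$ with $a\,u$ via the quasiminimizing property and \eqref{eq-energy-u-x0-x} for the converse, and the same reduction of the whole-line case to the two half-lines (so the ``mixed'' situation you worry about is not an issue: each half-line is treated by its own case split, and your parenthetical claim that finiteness of $\int_0^\infty w^{1/(1-p)}\,dt$ forces finiteness of $\int_{-\infty}^0 w^{1/(1-p)}\,dt$ is false but also not needed).

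The one step you are missing is the behaviour of $v$ at the boundary point $0$ of $\Om=(0,\infty)$. You normalize ``arrange $v(0)=0$'', but a quasiharmonic function on $(0,\infty)$ is a priori only defined on the open half-line and could blow up as $x\to0^+$; in the direction ``finite energy $\Rightarrow$ bounded'' you cannot assume a finite limit at $0$ before you have proved boundedness. The paper handles this first: by monotonicity $\lim_{x\to0^+}v(x)$ exists in $[-\infty,\infty]$, and H\"older's inequality gives
\[
\biggl|\int_0^1 v'(t)\,dt\biggr|\le\biggl(\int_0^1|v'|^p\,d\mu\biggr)^{1/p}\biggl(\int_0^1 w^{1/(1-p)}\,dt\biggr)^{1-1/p},
\]
where the last factor is finite by the local $A_p$ condition \eqref{eq-Ap-cond}; hence a function unbounded at $0$ has infinite energy, and otherwise one may extend $v$ continuously to $[0,\infty)$ and set $v(0)=0$. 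This is the same computation as in Lemma~\ref{lem-w-imp-v}\,\ref{it-v-bdd}, so the fix is short, but without it your appeal to that lemma (which presupposes $v$ locally absolutely continuous on the closed half-line) and your normalization are both unjustified. The rest of your argument, including the observation that in the divergent-integral case the comparison inequality actually forces the energy to zero, matches the paper's proof.
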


\begin{proof}
First, consider the case when $v$ is quasiharmonic on $\Om$.
By monotonicity, the limit
\[
\lim_{x\to0} v(x) = v(1) -\int_0^1 v'(t)\,dt
\]
exists (finite or infinite).
H\"older's inequality implies that
\[
\biggl| \int_0^1 v'(t) \, dt \biggr|
\le \biggl( \int_0^1 |v'|^p \,d\mu \biggr)^{1/p} 
     \biggl( \int_0^1 w^{1/(1-p)} \, dt \biggr)^{1-1/p}, 
\]
where the last integral is finite by 
Theorem~\ref{thm-local-padm-R} and the local $A_p$ condition~\eqref{eq-Ap-cond}.
This shows that if $v$ is unbounded at 0 then it has infinite energy.

We can therefore assume that $v\in C([0,\infty))$ and $v(0)=0$.
We consider two exhaustive cases:

1.\ If $\int_0^\infty w^{1/(1-p)} \, dt = \infty$, then the ``only if'' part 
follows from Lemmas~\ref{lem-w-imp-v}\,\ref{it-v-lim-0} 
and~\ref{lem-m=0-v-const}.
To see the ``if" part of the claim, note that 
the definition \eqref{eq-u} of $u$ implies that
$\lim_{x\to\infty}u(x)=\infty$ and hence \eqref{eq-u-v} 
holds whenever
$v$ is bounded. 
Lemma~\ref{lem-m=0-v-const} shows that $v\equiv0$ 
and thus of finite energy. 

2.\ If $\int_0^\infty w^{1/(1-p)} \, dt < \infty$, then the ``only if'' part 
is a direct consequence of Lemma~\ref{lem-w-imp-v}\,\ref{it-v-bdd}.
Conversely, 
assume that $v$ is bounded and nonconstant.
Then, by monotonicity, $\lim_{x\to\infty}v(x)$ exists and is finite.
Since $u$ is bounded (by Lemma~\ref{lem-p-harm-R-mu}), we can,
after multiplication by a constant, assume that
\begin{equation}  \label{eq-equal-limits}
0<\lim_{x\to\infty}v(x) = \lim_{x\to\infty}u(x)<\infty.
\end{equation}
Let $x>0$ be arbitrary.
Since $v$ has the same boundary values on $[0,x]$ as the function
$a u$, where $a=v(x)/u(x)$, the quasiminimizing property of $v$ 
(with a quasiminimizing constant $Q$) yields
\[
\int_0^{x} |v'|^p \,d\mu 
\le Q \biggl| \frac{v(x)}{u(x)} \biggr|^p \int_0^{x} (u')^p \, d\mu.
\]
Since $u$ has finite energy (by Lemma~\ref{lem-p-harm-R-mu})
and in 
view of~\eqref{eq-equal-limits}, letting $x\to\infty$
shows that also 
$v$ has finite energy.

Finally, if $v$ is quasiharmonic on $(\R,\mu)$, then applying the above
to both $(0,\infty)$ and $(-\infty,0)$ yields the result.
\end{proof}

We are now ready to obtain the following characterization,
from which Theorems~\ref{thm-weighted-R-char} 
and~\ref{thm-weighted-R-char-intro-2}
will follow rather directly.

\begin{prop}\label{prop-weighted-Om-char}
The following are equivalent for the open subset $\Om=(0,\infty)$
of the metric measure space $(\R,\mu)$\/\textup{:}
\begin{enumerate}
\item \label{R-bdd}
There exists a bounded nonconstant \p-harmonic function on $\Om$.
\item \label{R-finite}
There exists a nonconstant \p-harmonic function with finite energy 
on $\Om$.
\item \label{R-qmin-bdd}
There exists a  bounded nonconstant quasiharmonic function on $\Om$.
\item \label{R-qmin-finite}
There exists a nonconstant quasiharmonic function with finite energy 
on $\Om$.
\item \label{R-integral}
\[ 
    \int_0^\infty w^{1/(1-p)} \, dt < \infty.
\] 
\end{enumerate}
\end{prop}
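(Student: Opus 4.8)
The plan is to prove the cyclic chain of implications \ref{R-bdd}$\Rightarrow$\ref{R-finite}$\Rightarrow$\ref{R-qmin-finite}, \ref{R-bdd}$\Rightarrow$\ref{R-qmin-bdd}$\Rightarrow$\ref{R-qmin-finite}, and then \ref{R-qmin-finite}$\Rightarrow$\ref{R-integral}$\Rightarrow$\ref{R-bdd}, using the machinery already assembled. The implications \ref{R-finite}$\Rightarrow$\ref{R-qmin-finite} and \ref{R-qmin-bdd}$\Rightarrow$\ref{R-qmin-finite} are immediate once we invoke Proposition~\ref{prop-finite-energy-bdd}, which tells us that for quasiharmonic functions on $\Om$ boundedness and finite energy are equivalent; in particular a bounded nonconstant quasiharmonic function automatically has finite energy and vice versa, and a \p-harmonic function is a special quasiharmonic function. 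So the content is really in linking \ref{R-integral} to the existence of \emph{any} nonconstant example, and in closing the loop from the quasiharmonic side back to \ref{R-integral}.

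First I would handle \ref{R-integral}$\Rightarrow$\ref{R-bdd}. Assume $\int_0^\infty w^{1/(1-p)}\,dt<\infty$. Then the function $u$ of \eqref{eq-u}, restricted to $\Om$, is \p-harmonic by Lemma~\ref{lem-p-harm-R-mu}, is nonconstant since $w>0$ a.e.\ forces $u$ strictly increasing, and by \eqref{eq-a-int-2} has finite energy, hence (again by Lemma~\ref{lem-p-harm-R-mu}, or by Proposition~\ref{prop-finite-energy-bdd}) is bounded. This gives \ref{R-bdd} directly, and since that same $u$ is also a witness for \ref{R-finite}, \ref{R-qmin-bdd}, and \ref{R-qmin-finite}, the only remaining task is the converse direction, which I would route through \ref{R-qmin-finite}$\Rightarrow$\ref{R-integral} so as to cover all four existence statements at once.

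For \ref{R-qmin-finite}$\Rightarrow$\ref{R-integral} I would argue by contraposition: suppose $\int_0^\infty w^{1/(1-p)}\,dt=\infty$, and let $v$ be any quasiharmonic function on $\Om$ with finite energy; we must show $v$ is constant. As in the proof of Proposition~\ref{prop-finite-energy-bdd}, the finite energy of $v$ together with H\"older's inequality and the local $A_p$ condition \eqref{eq-Ap-cond} forces $v$ to have a finite limit as $x\to0$, so after subtracting a constant we may assume $v\in C([0,\infty))$ with $v(0)=0$. Now Lemma~\ref{lem-w-imp-v}\,\ref{it-v-lim-0} applies (with the locally absolutely continuous representative of $v$, which exists by Theorem~\ref{thm-local-padm-R}) and gives $\lim_{x\to\infty}|v(x)|/u(x)^{1-1/p}=0$, in particular \eqref{eq-u-v} holds. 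Then Lemma~\ref{lem-m=0-v-const} yields $v\equiv0$. This shows there is no nonconstant quasiharmonic function of finite energy on $\Om$, i.e.\ \ref{R-qmin-finite} fails, completing the contrapositive.

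The remaining bookkeeping is routine: \ref{R-bdd}$\Rightarrow$\ref{R-qmin-bdd} and \ref{R-finite}$\Rightarrow$\ref{R-qmin-finite} hold because \p-harmonic functions are quasiharmonic; \ref{R-qmin-bdd}$\Leftrightarrow$\ref{R-qmin-finite} is Proposition~\ref{prop-finite-energy-bdd}; and \ref{R-bdd}$\Rightarrow$\ref{R-finite} follows since a bounded \p-harmonic function has finite energy by Lemma~\ref{lem-p-harm-R-mu}. Assembling: \ref{R-integral} implies all of \ref{R-bdd}--\ref{R-qmin-finite} via the explicit witness $u$, and \ref{R-qmin-finite} (hence a fortiori any of \ref{R-bdd}, \ref{R-finite}, \ref{R-qmin-bdd}, which each imply \ref{R-qmin-finite}) implies \ref{R-integral} by the contrapositive argument above. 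The only genuine obstacle is making sure the hypotheses of Lemmas~\ref{lem-w-imp-v} and \ref{lem-m=0-v-const} are met — namely the reduction to $v(0)=0$ with $v$ locally absolutely continuous on $[0,\infty)$ and of finite energy — but this reduction is exactly the first paragraph of the proof of Proposition~\ref{prop-finite-energy-bdd}, so it can be quoted rather than redone.
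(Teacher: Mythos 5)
Your proposal is correct and follows essentially the same route as the paper: Lemma~\ref{lem-p-harm-R-mu} for the \p-harmonic equivalences, Proposition~\ref{prop-finite-energy-bdd} for \ref{R-qmin-bdd}$\Leftrightarrow$\ref{R-qmin-finite}, and Lemma~\ref{lem-m=0-v-const} to close the loop. The only (immaterial) difference is that the paper closes the loop via $\neg$\ref{R-integral}$\Rightarrow\neg$\ref{R-qmin-bdd}, where boundedness of $v$ together with $u(x)\to\infty$ gives \eqref{eq-u-v} directly without needing Lemma~\ref{lem-w-imp-v}, whereas you route through $\neg$\ref{R-integral}$\Rightarrow\neg$\ref{R-qmin-finite}, which is exactly the argument already contained in case~1 of the proof of Proposition~\ref{prop-finite-energy-bdd}.
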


\begin{proof}
The equivalences \ref{R-bdd} $\eqv$ \ref{R-finite} $\eqv$ \ref{R-integral}
follow from Lemma~\ref{lem-p-harm-R-mu}, while 
Proposition~\ref{prop-finite-energy-bdd} implies that
\ref{R-qmin-bdd}~$\eqv$ \ref{R-qmin-finite}.
The implication \ref{R-bdd}~$\imp$ \ref{R-qmin-bdd} is trivial.

Finally, to prove that $\neg$\ref{R-integral}~$\imp$ $\neg$\ref{R-qmin-bdd},
let $v$ be a bounded quasiharmonic function on $\Om$
with $v(0):=\lim_{t \to 0} v(t)=0$.
The definition \eqref{eq-u} of $u$ implies that
$\lim_{x\to\infty}u(x)=\infty$, and hence \eqref{eq-u-v}
holds.
We can therefore use  Lemma~\ref{lem-m=0-v-const} to conclude
that $v\equiv0$,
i.e.\ \ref{R-qmin-bdd} fails.
\end{proof}

\begin{proof}[Proof of Theorem~\ref{thm-weighted-R-char}]
That $\mu$ is absolutely continuous follows
from Theorem~\ref{thm-local-padm-R}.
Moreover, any nonconstant quasiharmonic function on $(\R,\mu)$
is strictly monotone by the strong maximum principle.
Thus the implications 
\ref{c-a}~$\imp$ \ref{c-c}~$\imp$ \ref{c-e} 
and 
\ref{c-b}~$\imp$ \ref{c-d}~$\imp$ \ref{c-e} 
follow immediately from applying 
Proposition~\ref{prop-weighted-Om-char} to both $(0,\infty)$ and
 $(-\infty,0)$.
Conversely, Lemma~\ref{lem-p-harm-R-mu} shows that 
\ref{c-e}~$\imp$ \ref{c-a}~$\eqv$ \ref{c-b}.
\end{proof}

\begin{proof}[Proof of Theorem~\ref{thm-weighted-R-char-intro-2}]
\ref{S-integral} $\imp$ \ref{S-bdd}
By Lemma~\ref{lem-p-harm-R-mu}, the function $u$, given by \eqref{eq-u},
 is \p-harmonic on $(\R,\mu)$.
On the other hand, by \ref{S-integral} it is 
bounded from above or below (or both), and thus  
either $a+u$ or $a-u$ is a 
positive nonconstant \p-harmonic function on $(\R,\mu)$
if $a \in \R$ is large enough.

\ref{S-bdd} $\imp$ \ref{S-qmin-bdd}
This is trivial.

\ref{S-qmin-bdd} $\imp$ \ref{S-integral}
That $\mu$ is absolutely continuous follows
from Theorem~\ref{thm-local-padm-R}.
Let $v$ be a positive nonconstant quasiharmonic function on $(\R,\mu)$.
Since $v$ is strictly monotone it is either bounded on $(-\infty,0)$
or on $(0,\infty)$ (or both).
In either case, \ref{S-integral} follows from 
Proposition~\ref{prop-weighted-Om-char}
applied to either $(-\infty,0)$ or $(0,\infty)$. 
\end{proof}

The results above raise the questions of whether, in a  metric space
supporting a locally doubling measure and a local Poincar\'e inequality
there can exist a bounded quasiharmonic (or \p-harmonic) function
with infinite energy, and whether there can exist an unbounded
quasiharmonic (or \p-harmonic) function
with finite energy. 
Both questions have affirmative answers. In the latter case this
is shown in Example~\ref{ex:tree} below, and in the former
case in the following example.

\begin{example}  \label{ex-bdd-inf-energy}
Let $\mu_1$ and $\mu_2$ be locally doubling measures 
on $\R$ supporting local \p-Poincar\'e inequalities.
Then $\mu= \mu_1 \otimes \mu_2$ 
is locally doubling and supports a local \p-Poincar\'e inequality
on $\R^2$, cf.\ Bj\"orn--Bj\"orn~\cite[Theorem~3]{BBtensor}, which
can be proved also under local assumptions.

By Theorem~\ref{thm-local-padm-R},
there are 
weights $w_1$ and $w_2$ such that $d\mu_j=w_j\,dx$, $j=1,2$.
Assume that
\[
    \int_{-\infty}^\infty w_1^{1/(1-p)} \, dt < \infty,
\]
and let $u_1$ be any bounded nonconstant $Q$-quasiharmonic
function on $(\R,\mu_1)$, which exists by
Theorem~\ref{thm-weighted-R-char},
and which has finite energy by 
Proposition~\ref{prop-finite-energy-bdd}.

Extend $u_1$ to $\R^2$ by letting $u(x,y)=u_1(x)$ for $(x,y) \in \R^2$.
Then $u$ is $Q$-quasiharmonic in $(\R^2,\mu)$, by 
Corollary~8 in \cite{BBtensor}.
(When $Q=1$, i.e.\ the \p-harmonic case, this can be
deduced directly from the \p-harmonic equation.)
Since $u$ is bounded, 
it follows that the bounded Liouville theorem fails in
$(\R^2,\mu)$.

Now $g_u(x,y)=g_{u_1}(x)=|u_1'(x)|$ a.e.,
by Theorem~\ref{thm-local-padm-R},
and thus
\[
    \int_{\R^2} g_u^p \, d\mu = \mu_2(\R)\int_{\R} g_{u_1}^p \, d\mu_1,
\]
where the integral on the right-hand side is finite, since
$u_1$ has finite energy. 
Hence $u$ has finite energy if and only if $\mu_2(\R)<\infty$,
in which case also the finite-energy Liouville 
theorem fails in
$(\R^2,\mu)$.
When $\mu_2(\R)=\infty$ (e.g.\ when $\mu_2$ is the Lebesgue measure),
$u$ is an example of a bounded quasiharmonic function with infinite
energy, which is \p-harmonic if $Q=1$.
We do not know if the finite-energy Liouville 
theorem holds in this case.
\end{example}

\section{Further examples in the absence of annular chainability}
\label{sect-further-ex}

\begin{example} \label{ex-RxI}
$X=\R \times [0,1]$ is an example of a space for which 
Theorem~\ref{thm-intro} is not applicable.
We shall show that if $X$ is equipped with the Lebesgue measure 
$dm=dx\,dy$
then every quasiharmonic function $v$ on $X$
with finite energy must be constant.

The main ideas are as in Section~\ref{sect-c},
but extra
care needs to be taken in the $y$-direction.
Let $v$ be a nonconstant quasiharmonic function on $X$
with finite energy.
Recall that $g_v=|\nabla v|$ a.e., see Remark~\ref{rmk-gu}.
For $x\in\R$ let
\[
   t(x)=\min_{0 \le y \le 1} v(x,y)
   \quad \text{and} \quad
   T(x)=\max_{0 \le y \le 1} v(x,y).
\]
As $v$ has finite energy, it follows from Lemma~\ref{lem-key} that
\begin{equation}  \label{eq-T-t-0}
\lim_{x \to \pm\infty} (T(x)-t(x)) 
\le \lim_{x \to \pm\infty} 
   C \biggl( \int_{(x-2\la,x+2\la)\times[0,1]} |\nabla v|^p\,dm \biggr)^{1/p} = 0.
\end{equation}
By the strong maximum principle, $T$ and $t$ are strictly
monotone functions on $\R$, and because of~\eqref{eq-T-t-0} we
can therefore assume that they are both strictly increasing and that 
$t(0)=0$.
We shall now show that 
\begin{equation}   \label{eq-lim-t/x-0}
\lim_{x\to\infty} \frac{T(x)}{x^{1-1/p}} 
   = \lim_{x\to\infty} \frac{t(x)}{x^{1-1/p}} = 0.
\end{equation}
Since \eqref{eq-T-t-0} implies that
Here we actually use that $t(0)=0$ so that $\lim_{x \to \infty} T(x)>0$
\[ 
\lim_{x\to\infty} \frac{t(x)}{T(x)} 
= 1 - \lim_{x\to\infty} \frac{T(x)-t(x)}{T(x)} = 1,
\] 
it suffices to consider the second limit in~\eqref{eq-lim-t/x-0}.
Fix $\de>0$ arbitrary.
As $v$ has finite energy, there is $x_0>0$ such that 
\begin{equation} \label{eq-int-est}
\int_{(x_0,\infty)\times[0,1]}|\grad v|^p\,dm
    < \de^p.
\end{equation}
Assume that 
there exists  $x_1>x_0$ such that 
$t(x_1) > 2\de x_1^{1-1/p} > 2T(x_0)$.
Then for all $y\in[0,1]$,
\begin{equation}   \label{eq-ge-de}
v(x_1,y)-v(x_0,y) \ge t(x_1)-T(x_0) > \tfrac12 t(x_1) 
> \de x_1^{1-1/p}.
\end{equation}
It is easily verified that $v(\,\cdot\,,y)$ has the same boundary values 
on $[x_0,x_1]$ as the function $a(y) x + b(y)$, where
\[
a(y) = \frac{v(x_1,y)-v(x_0,y)}{x_1-x_0}>0
  \quad \text{and} \quad
b(y) = v(x_0,y)- a(y) x_0. 
\]
Since linear functions on $\R$ minimize energy,  we obtain as in the proof 
of Lemma~\ref{lem-w-imp-v} that for each $y\in[0,1]$,
\[ 
\int_{x_0}^{x_1} |\bdy_x v(x,y)|^p \,dx
\ge a(y)^p (x_1-x_0) 
= \frac{(v(x_1,y)-v(x_0,y))^p}{(x_1-x_0)^{p-1}} 
> \de^p,
\] 
where the last estimate uses \eqref{eq-ge-de}.
Integrating over $y\in [0,1]$, 
gives  
\[
\int_0^1 \int_{x_0}^{x_1} |\bdy_x v(x,y)|^p \,dx \,dy 
> \de^p,
\]
which contradicts \eqref{eq-int-est}.
So $\limsup_{x\to\infty} t(x)/x^{1-1/p}\le 2\de$,
and letting ${\de}\to0$ proves \eqref{eq-lim-t/x-0}.
Finally, for $n=1,2,\ldots$\,, let
\[
\Om_n = \{(x,y)\in X: 0<v(x,y) < T(n)\} \supset (0,n)\times[0,1],
\]
which is bounded since $\lim_{x \to \infty} t(x)=\infty$
by 
\eqref{eq-T-t-0} and
the positive Liouville theorem (Theorem~\ref{thm-Liouville}).
Compare the energy of $v$  on $\Om_n$ with the energy of the piecewise
linear function
$v_n =T(n)\max\{0,\min\{1,x/n\}\}$.
Note that $v=v_n$ on $\bdy\Om_n$, 
\[
\bdy_x v_n(x,y) = \frac{T(n)}{n}\chi_{\{0 <x<n\}} \quad \text{a.e. on $X$}
\] 
and $\bdy_y v_n\equiv0$.
Using the quasiharmonicity of $v$ (with a 
quasiminimizing constant~$Q$), 
we thus obtain from~\eqref{eq-lim-t/x-0} that
\[
\int_{(0,n)\times[0,1]} |\grad v|^p \,dm 
      \le \int_{\Om_n} |\grad v|^p \,dm 
      \le Q \int_{\Om_n} |\grad v_n|^p \,dm
= Q \frac{T(n)^p}{n^{p-1}} \to 0,
\]
as $n \to \infty$.
This implies that $\grad v=0$ a.e.\ in $(0,\infty)\times[0,1]$,
and thus, by continuity, $v$ is constant therein.
By the strong maximum principle, $v$ is constant on $X$.
\end{example}

We saw in Theorem~\ref{thm-weighted-R-char}
that on the real line
one can never have an unbounded quasiharmonic function
with finite energy.
The following example shows that there are spaces
which admit unbounded \p-harmonic functions
with finite energy.

\begin{example}\label{ex:tree}
Let $G=(V,E)$ be the infinite binary rooted tree, with root $v_0 \in V$
having degree $2$ and all other vertices having degree $3$.
The edge between two neighbouring vertices $a$ and $b$ will be denoted 
$[a,b]$.
Each edge is considered to be a line segment of length $1$,
which makes $G$ into a  metric tree.
Each vertex, but for the root, has three neighbours: one parent
and two children; the root has two children but no parent.

Fixing one geodesic ray $\gamma=\{v_j\}_{j=0}^\infty$ 
starting at the root  $v_0$ and with $v_{j+1}$ being a child
of $v_j$, 
we equip $G$ with the measure $\mu$ as follows.
On the edge $[v_{j},v_{j+1}]$ we let $d\mu= 2^{-j} \,dm$,
where $m$ is the usual one-dimensional Lebesgue measure.
On edges $[a,b]\in E$ that do not belong to the ray $\ga$,
we let $d\mu= 2^{-k} \,dm$, where
$v_k$ is the unique vertex on the ray $\ga$ that is closest to $[a,b]$.

Because of the uniform bound on the degree, 
the measure $\mu$ is  locally doubling and 
supports a 
local $1$-Poincar\'e inequality with uniform constants $r_0$, $C$ and
$\la$  independent of $x_0$.

A function $u:G \to \R$ is \p-harmonic in the sense of 
Definition~\ref{def-qmin}
if and only if it is linear on each edge and 
\begin{equation} \label{eq-pharm-graph}
   \sum_{b \sim a} |u(b)-u(a)|^{p-2} (u(b)-u(a)) \mu([a,b])=0
\end{equation}
holds for each vertex $a$, where the sum is over all neighbours $b$ of $a$,
see Andersson~\cite{andersson},
Holopainen--Soardi~\cite{HoSo1},
Shanmugalingam~\cite[Lemma~3.3]{Sh-conv}
and Bj\"orn--Bj\"orn~\cite[Lemma~A.27]{BBbook}.

We now construct two nonconstant \p-harmonic functions on $G$
with finite energy, one bounded and one unbounded.
Both functions need to be linear on each edge, so we only need
to define them on the vertices.
We start with the unbounded one.

Let $u(v_j)=j$, $j=0,1,\ldots$\,. 
This defines $u$ on the fixed ray $\ga$.
Each vertex $v_j\in \ga$ has two children $v_{j+1}$ and, say, $v_{j+1}'$.
We let $u(v_1')=-1$ and $u(v_{j}')=u(v_{j-1})+1=j$ if $j \ge 2$.
Since
\[
   \mu([v_{j-1},v_j]) = 2^{1-j} 
   =2\mu([v_{j},v_{j+1}])
   =2\mu([v_{j},v_{j+1}']),
   \quad \text{if } j \ge 1,
\]
and $\mu([v_0,v_1])=\mu([v_0,v_1'])$,
this makes $u$ satisfy the \p-harmonic condition 
\eqref{eq-pharm-graph} at all vertices $v_j\in\ga$.
To define $u$ on the remaining vertices we prescribe its change along 
each of its edges as follows. 
Any vertex $a\notin\ga$ has one parent $b$ and two children
$c$ and $c'$, and the corresponding three edges have equal masses.
Letting 
\begin{equation}   \label{eq-def-u(c)}
   u(c)-u(a)=u(c')-u(a)= -2^{1/(1-p)}(u(b)-u(a))
\end{equation}
recursively makes $u$ satisfy the \p-harmonic condition 
\eqref{eq-pharm-graph} at all vertices, and thus
$u$ is \p-harmonic on $G$.

We will now see that $u$ has finite energy. 
Let $G_j$ be the subgraph of $G$ consisting of $v'_j$ together with
all its descendants and corresponding edges.
Because of \eqref{eq-def-u(c)}, the gradient $g_u$ on the
edges of $G_j$ at distance $k-1$ from $v'_j$
is $(2^{1/(1-p)})^k$, and $u$'s energy on $G_j$ is thus
\[
\int_{G_{j}} g_u^p\, d\mu 
   = 2^{1-j} \sum_{k=1}^\infty 2^k (2^{1/(1-p)})^{kp} 
   = 2^{1-j} \sum_{k=1}^\infty 2^{k/(1-p)}, \quad j=1,2,\ldots.
\]
Since $g_u$ on $\ga$ and the adjacent edges is constant 1,
while the measure behaves like $2^{-j}$,
the total energy on $G$ is thus 
\[
  \int_Gg_u^p\, d\mu 
    = \sum_{j=0}^\infty \biggl( 2^{-j} 
     + 2^{-j} + \int_{G_{j+1}} g_u^p\, d\mu \biggr)
    = 4 + 2 \sum_{k=1}^\infty 2^{k/(1-p)} < \infty,
\]
i.e.\ $u$ has finite energy.
Clearly $u$ is unbounded along the ray $\ga$, while it is bounded on $G_j$
for each $j$, and thus bounded from below.

The following modification produces
a bounded nonconstant \p-harmonic function $\ut$ on $G$.
Let $\ut(v_0)=0$, $\ut(v_j)=1$, $j \ge 1$, 
$\ut=u$ on $G_1$, 
\[
\ut=2^{1/(p-1)}(u-1)+1 \quad \text{on }G_2
\] 
and $\ut\equiv 1$ on $G_j$, $j\ge 3$.
Then $\ut$ is a bounded nonconstant \p-harmonic function on $G$.
Moreover, 
$\int_G g_{\ut}^p \,d \mu \le 2^{p/(p-1)} \int_G g_{u}^p \,d \mu < \infty$,
i.e.\ also $\ut$ has finite energy.
\end{example}

\end{document}